\newcommand{\NN}{\mathbb{N}}
\newcommand{\QQ}{\mathbb{Q}}
\newcommand{\RR}{\mathbb{R}}
\newcommand{\ZZ}{\mathbb{Z}}
\newcommand{\bA}{{\boldsymbol{A}}}
\newcommand{\bG}{{\boldsymbol{G}}}
\newcommand{\bI}{{\boldsymbol{I}}}
\newcommand{\bM}{{\boldsymbol{M}}}
\newcommand{\bV}{{\boldsymbol{V}}}
\newcommand{\bx}{{\boldsymbol{x}}}
\newcommand{\bX}{{\boldsymbol{X}}}
\newcommand{\bGamma}{{\boldsymbol{\Gamma}}}
\newcommand{\blambda}{{\boldsymbol{\lambda}}}
\newcommand{\bbeta}{{\boldsymbol{\beta}}}
\newcommand{\Beta}{{\boldsymbol{\eta}}}
\newcommand{\bone}{{\boldsymbol{1}}}
\newcommand{\cA}{{\mathcal A}}
\newcommand{\cX}{{\mathcal X}}
\newcommand{\ee}{\mathrm{e}}
\DeclareMathOperator*{\conv}{conv}
\newcommand{\EE}{\operatorname{\mathbb{E}}}
\newcommand{\PP}{\operatorname{\mathbb{P}}}
\newcommand{\cov}{\operatorname{Cov}}
\newcommand{\rank}{\operatorname{rank}}
\newcommand{\sign}{\operatorname{sign}}
\DeclareMathOperator*{\argmin}{arg\,min}
\newcommand{\halpha}{\widehat{\alpha}}
\newcommand{\comment}[1]{}
\renewcommand{\mid}{\,|\,}
\renewcommand{\leq}{\leqslant}
\renewcommand{\geq}{\geqslant}
\newcommand{\proofend}{\hfill\mbox{$\Box$}}
\numberwithin{equation}{section}
\theoremstyle{change} \theorembodyfont{\em}
\newtheorem{Lem}{Lemma.}[section]
\newtheorem{Thm}[Lem]{Theorem.}
\newtheorem{Pro}[Lem]{Proposition.}
\newtheorem{Cor}[Lem]{Corollary.}
\newtheorem{Def}[Lem]{Definition.}
\newtheorem{Rem}[Lem]{Remark.}
\newtheorem{Ex}[Lem]{Example.}
\def\eq#1{{\rm(\ref{#1})}}
\long\def\Eq#1#2{\ifthenelse{\equal{#1}{*}}
  {\begin{equation*}\begin{aligned}#2\end{aligned}\end{equation*}}
  {\begin{equation}\begin{aligned}\label{#1}#2\end{aligned}\end{equation}}}
\def\OnlyOnArXiv#1#2{\ifthenelse{\equal{#1}{Y}}{#2}{}}
\newenvironment{proof}{\noindent{\bf Proof.}}{\proofend}
\begin{document}

\begin{center}
 {\bfseries\Large Comparison and equality of generalized $\psi$-estimators}

\vspace*{2mm}

{\sc\large
  M\'aty\'as $\text{Barczy}^{*,\diamond}$,
  Zsolt $\text{P\'ales}^{**}$ }

\end{center}

%\vskip5mm

\noindent
 * HUN-REN–SZTE Analysis and Applications Research Group,
   Bolyai Institute, University of Szeged,
   Aradi v\'ertan\'uk tere 1, H--6720 Szeged, Hungary.

\noindent
 ** Institute of Mathematics, University of Debrecen,
    Pf.~400, H--4002 Debrecen, Hungary.

\noindent e-mail: barczy@math.u-szeged.hu (M. Barczy),
                  pales@science.unideb.hu  (Zs. P\'ales).

\noindent $\diamond$ Corresponding author.

\vskip0.1cm

%\centerline{\sl March 10, 2018}

{\renewcommand{\thefootnote}{}
\footnote{\textit{2020 Mathematics Subject Classifications\/}:
 62F10, 62D99, 26E60}
\footnote{\textit{Key words and phrases\/}:
 $\psi$-estimator, Z-estimator, comparison of estimators, equality of estimators, Bajraktarevi\'c-type estimator, quasi-arithmetic-type estimator, likelihood equation.}
\vspace*{0.2cm}
\footnote{M\'aty\'as Barczy was supported by the project TKP2021-NVA-09.
Project no.\ TKP2021-NVA-09 has been implemented with the support provided by the Ministry of Culture and Innovation of Hungary from the National Research, Development and Innovation Fund,
 financed under the TKP2021-NVA funding scheme.
Zsolt P\'ales is supported by the K-134191 NKFIH Grant.}}

\vspace*{-15mm}

\begin{abstract}
We solve the comparison problem for generalized $\psi$-estimators introduced in Barczy and P\'ales (2022).
Namely, we derive several necessary and sufficient conditions under which a generalized
 $\psi$-estimator less than or equal to another $\psi$-estimator for any sample.
We also solve the corresponding equality problem for generalized $\psi$-estimators.
For applications, we solve the two problems in question for Bajraktarevi\'c-type- and quasi-arithmetic-type estimators.
We also apply our results for some known statistical estimators such as for empirical expectiles and Mathieu-type
 estimators and for solutions of likelihood equations in case of normal, a Beta-type, Gamma, Lomax (Pareto type II),
 lognormal and Laplace distributions.
\end{abstract}

\tableofcontents

\section{Introduction}
\label{section_intro}

In this paper, we solve the comparison and equality problems for $\psi$-estimators (also called $Z$-estimators)
 that have been playing an important role in statistics since the 1960's.
Let $(X,\cX)$ be a measurable space, $\Theta$ be a Borel subset of $\RR$, and $\psi:X\times\Theta\to\RR$
 be a function such that for all $t\in\Theta$, the function $X\ni x\mapsto \psi(x,t)$ is measurable
 with respect to the sigma-algebra $\cX$.
Let $(\xi_n)_{n\geq 1}$ be a sequence of i.i.d.\ random variables with values in $X$ such that the distribution of $\xi_1$ depends on an unknown parameter $\vartheta \in\Theta$.
For each $n\geq 1$, Huber \cite{Hub64, Hub67} among others introduced an important estimator of $\vartheta$ based on the observations
 $\xi_1,\ldots,\xi_n$ as a solution $\widehat\vartheta_{n,\psi}(\xi_1,\ldots,\xi_n)$ of the equation
 (with respect to the unknown parameter):
 \[
    \sum_{i=1}^n \psi(\xi_i,t)=0, \qquad t\in\Theta,
 \]
 provided that such a solution exists.
In the statistical literature, one calls $\widehat\vartheta_{n,\psi}(\xi_1,\ldots,\xi_n)$ a $\psi$-estimator of the unknown parameter $\vartheta\in\Theta$ based on
 the i.i.d.\ observations $\xi_1,\ldots,\xi_n$, while other authors call it a Z-estimator (the letter Z refers to ''zero'').
In fact, $\psi$-estimators are special M-estimators (where the letter M refers to ''maximum likelihood-type'')
 that were also introduced by Huber \cite{Hub64, Hub67}.
For a detailed exposition of M-estimators and $\psi$-estimators, see, e.g.,
 Kosorok \cite[Subsection 2.2.5 and Section 13]{Kos} or van der Vaart \cite[Section 5]{Vaa}.

According to our knowledge, results on the comparison and the equality of $\psi$-estimators or $M$-estimators are not available in the literature.
In other words, given $\psi,\varphi:X\times\Theta\to\RR$ (with the properties described above),
 we are interested in finding necessary as well as sufficient conditions for the inequality
 $\widehat\vartheta_{n,\psi}\leq \widehat\vartheta_{n,\varphi}$ and for the equality $\widehat\vartheta_{n,\psi}=\widehat\vartheta_{n,\varphi}$
 to be valid for all $n\geq1$, respectively.
In this paper, we make the first steps to fill this gap in case of generalized $\psi$-estimators introduced in Barczy and P\'ales \cite{BarPal2}
 (see also Definition \ref{Def_Tn}), which are generalizations of $\psi$ estimators recalled above.

We mention that in general linear models, many authors investigated a related question, namely,  the equality of the ordinary least squares estimator (OLSE) and the best linear unbiased estimator (BLUE) of the regression parameters.
In fact, OLSE is a special $M$-estimator (see, e.g., van der Vaart \cite[Examples 5.27 and 5.28]{Vaa}),
and the solution of the related minimization problem is usually traced back to solve a system of equations, and hence OLSE can be also considered as a $Z$-estimator.
For a detailed discussion on results for the equality of the OLSE and the BLUE of the regression parameters in general linear models, see Section \ref{Section_gen_regression}.

In the rest of this section, we introduce the basic notations and concepts that are used throughout the paper.
Then we present some properties of generalized $\psi$-estimators that do not appear in Barczy and P\'ales \cite{BarPal2} such as a mean-type property
 (see Proposition \ref{Pro_psi_becsles_mean_prop}).
These properties are not just interesting on their own rights, but we also use them in the proofs for solving the comparison and equality problems of generalized $\psi$-estimators.
The comparison problem for generalized $\psi$-estimators is solved in Section \ref{Section_comp_equal_psi_est}
(see Theorems \ref{Lem_psi_est_eq_5} and \ref{Lem_phi_est_eq_5}), namely, we derive several equivalent conditions
 in order that a $\psi$-estimator be less than or equal to another $\psi$-estimator based on any possible realization of samples of any size.
Under an additional differentiability assumption, a further well-useable equivalent condition is derived,
see Theorem \ref{Thm_inequality_diff}.
We point out the fact that these results make one possible to compare two $\psi$-estimators even if these estimators cannot be explicitly computed, only they can be numerically approximated.
Theorem \ref{Thm_equality} is devoted to solve the equality problem for generalized $\psi$-estimators.
In Section \ref{Sec_comp_equal_Bajrak}, we apply our results in Section \ref{Section_comp_equal_psi_est}
in order to solve the comparison and equality problems for Bajraktarevi\'c-type estimators
that were introduced in Barczy and P\'ales \cite{BarPal2} (see also \eqref{help14}).
Propositions \ref{Pro_Baj_type_comparison} and \ref{Pro_Baj_type_comparison_2} are
about the comparison problem, while Theorems \ref{Thm_Baj_type_equality} and \ref{Lem_aux}
are about the equality problem for Bajraktarevi\'c-type estimators.
We note that, surprisingly, in the heart of the proof of the equality problem for Bajraktarevi\'c-type estimators a result about Schwarzian derivative and rational functions come into play, see Lemma \ref{Lem_Sch_deriv}.
We can also characterize the equality of quasiarithmetic-type $\psi$-estimators, see Corollary \ref{Pro_qa_type_equality}.
In Proposition \ref{Pro_Mobius},  we derive a necessary and sufficient condition in order that two
 strictly increasing functions defined on a nondegenerate open interval be the M\"obius transforms of each other.
In Section \ref{Sec_stat_examples}, we apply our results in Section \ref{Section_comp_equal_psi_est} for some known statistical estimators such as for empirical expectiles and Mathieu-type estimators and for solutions of likelihood equations in case of normal, a Beta-type, Gamma, Lomax (Pareto type II), lognormal and Laplace distributions. We mention that our results can also be used for approximating the solutions of likelihood equations in terms of some simpler $\psi$-estimators.
This approach can be useful when one cannot explicitly  calculate the solution of the likelihood equation in question,
but the approximating $\psi$-estimators are explicitly and easily computable.
In Proposition \ref{Pro_Beta_approx}, we present such an approximation of the solution of the likelihood equation for a Beta-type distribution.

The statistical applications of Bajraktarevi\'c-type $\psi$-estimators has not been explored yet, it can be a topic of a future research.
Here we point out an important field in practice, where these types of estimators may be indeed useful.
Mukhopadhyay et al.\ \cite{MukDasBasChaBha} found that in the presence of outliers in the data,
more precisely, when the data are generated by a mixture population involving a major (dominating) component and a minor (outlying) component,
the power mean (also called generalized mean) estimates the mean of the dominating population more accurately compared to the usual maximum likelihood estimator.
Thus the class of power means offers an alternative way for estimating the target mean parameter
without invoking the complications of sophisticated robust techniques.
Power means are special Bajraktarevi\'c means, that can be considered as special Bajraktarevi\'c-type $\psi$-estimators.
This can indicate some potential of Bajraktarevi\'c-type $\psi$-estimators as well in estimation of parameters for
 date coming from a mixture population.

Throughout this paper, we fix the following notations: the symbols $\NN$, $\ZZ_+$, $\QQ$, $\RR$, $\RR_+$, $\RR_{++}$, and $\RR_{--}$ will stand for the sets of positive integers,
non-negative integers, rational numbers, real numbers, non-negative real numbers, positive real numbers, and negative real numbers, respectively. For a subset $S\subseteq \RR$, the convex hull of $S$ (which is the smallest interval containing $S$) is denoted by $\conv(S)$. A real interval will be called nondegenerate if it contains at least two distinct points.
For each $n\in\NN$, let us also introduce the set $\Lambda_n:=\RR_+^n\setminus\{(0,\ldots,0)\}$.
The rank of a matrix $A\in\RR^{n\times n}$ is denoted by $\rank(A)$.
All the random variables are defined on an appropriate probability space $(\Omega,\cA,\PP)$.

\begin{Def}\label{Def_sign_change}
Let $\Theta$ be a nondegenerate open interval of $\RR$. For a function $f:\Theta\to\RR$, consider the following three level sets
\[
  \Theta_{f>0}:=\{t\in \Theta: f(t)>0\},\qquad
  \Theta_{f=0}:=\{t\in \Theta: f(t)=0\},\qquad
  \Theta_{f<0}:=\{t\in \Theta: f(t)<0\}.
\]
We say that $\vartheta\in\Theta$ is a \emph{point of sign change (of decreasing-type) for $f$} if
 \[
 f(t) > 0 \quad \text{for $t<\vartheta$,}
   \qquad \text{and} \qquad
    f(t)< 0 \quad  \text{for $t>\vartheta$.}
 \]
\end{Def}

Note that there can exist at most one element $\vartheta\in\Theta$ which is a point of sign change for $f$.
Further, if $f$ is continuous at a point $\vartheta$ of sign change, then $\vartheta$ is the unique zero of $f$.

Let $X$ be a nonempty set, $\Theta$ be a nondegenerate open interval of $\RR$ and let $\Psi(X,\Theta)$ denote the class of real-valued functions $\psi:X\times\Theta\to\RR$ such that, for all $x\in X$, there exist $t_+,t_-\in\Theta$ such that $t_+<t_-$ and $\psi(x,t_+)>0>\psi(x,t_-)$.
Roughly speaking, a function $\psi\in\Psi(X,\Theta)$ satisfies the following property:
 for all $x\in X$, the function $t\ni\Theta\mapsto \psi(x,t)$ changes sign (from positive to negative)
 on the interval $\Theta$ at least once.

\begin{Def}\label{Def_Tn}
We say that a function $\psi\in\Psi(X,\Theta)$
\vspace{-3mm}
  \begin{enumerate}[(i)]
    \item \emph{possesses the property $[C]$ (briefly, $\psi$ is a $C$-function)} if
           it is continuous in its second variable, i.e., if, for all $x\in X$,
           the mapping $\Theta\ni t\mapsto \psi(x,t)$ is continuous.
    \item \emph{possesses the property $[T_n]$ (briefly, $\psi$ is a $T_n$-function)
           for some $n\in\NN$} if there exists a mapping $\vartheta_{n,\psi}:X^n\to\Theta$ such that,
           for all $\pmb{x}=(x_1,\dots,x_n)\in X^n$ and $t\in\Theta$,
           \begin{align}\label{psi_est_inequality}
             \psi_{\pmb{x}}(t)=\sum_{i=1}^n \psi(x_i,t) \begin{cases}
                 > 0 & \text{if $t<\vartheta_{n,\psi}(\pmb{x})$,}\\
                 < 0 & \text{if $t>\vartheta_{n,\psi}(\pmb{x})$},
            \end{cases}
           \end{align}
          that is, for all $\pmb{x}\in X^n$, the value $\vartheta_{n,\psi}(\pmb{x})$ is a point of sign change for the function $\psi_{\pmb{x}}$. If there is no confusion, instead of
          $\vartheta_{n,\psi}$ we simply write $\vartheta_n$.
          We may call $\vartheta_{n,\psi}(\pmb{x})$ as a generalized $\psi$-estimator for
         some unknown parameter in $\Theta$ based on the realization $\bx=(x_1,\ldots,x_n)\in X^n$. If, for each $n\in\NN$, $\psi$ is a $T_n$-function, then we say that \emph{$\psi$ possesses the
         property $[T]$ (briefly, $\psi$ is a $T$-function)}.
    \item \emph{possesses the property $[Z_n]$ (briefly, $\psi$ is a $Z_n$-function) for some $n\in\NN$} if it is a $T_n$-function and
    \[
   \psi_{\pmb{x}}(\vartheta_{n,\psi}(\pmb{x}))=\sum_{i=1}^n \psi(x_i,\vartheta_{n,\psi}(\pmb{x}))= 0
    \qquad \text{for all}\quad \pmb{x}=(x_1,\ldots,x_n)\in X^n.
    \]
    If, for each $n\in\NN$, $\psi$ is a $Z_n$-function, then we say that \emph{$\psi$ possesses the property $[Z]$ (briefly, $\psi$ is a $Z$-function)}.
    \item \emph{possesses the property $[T_n^{\pmb{\lambda}}]$ for some $n\in\NN$ and $\pmb{\lambda}=(\lambda_1,\ldots,\lambda_n)\in\Lambda_n$ (briefly, $\psi$ is a
        $T_n^{\pmb{\lambda}}$-function)} if there exists a mapping $\vartheta_{n,\psi}^{\pmb{\lambda}}:X^n\to\Theta$ such that, for all $\pmb{x}=(x_1,\dots,x_n)\in X^n$ and $t\in\Theta$,
          \begin{align}\label{psi_est_inequality_weighted}
           \psi_{\pmb{x},\pmb{\lambda}}(t)= \sum_{i=1}^n \lambda_i\psi(x_i,t) \begin{cases}
                 > 0 & \text{if $t<\vartheta_{n,\psi}^{\pmb{\lambda}}(\pmb{x})$,}\\
                 < 0 & \text{if $t>\vartheta_{n,\psi}^{\pmb{\lambda}}(\pmb{x})$},
             \end{cases}
           \end{align}
           that is, for all $\pmb{x}\in X^n$, the value $\vartheta_{n,\psi}^{\pmb{\lambda}}(\pmb{x})$ is
           a point of sign change for the function $\psi_{\pmb{x},\pmb{\lambda}}$.
           If there is no confusion, instead of $\vartheta_{n,\psi}^{\pmb{\lambda}}$ we simply write $\vartheta_n^{\pmb{\lambda}}$.
          We may call $\vartheta_{n,\psi}^{\pmb{\lambda}}(\pmb{x})$
          as a weighted generalized $\psi$-estimator for some unknown parameter in $\Theta$ based
          on the realization $\bx=(x_1,\ldots,x_n)\in X^n$ and weights $(\lambda_1,\ldots,\lambda_n)\in\Lambda_n$.
   \end{enumerate}
\end{Def}

It can be seen that if $\psi$ is continuous in its second variable and, for some $n\in\NN$, it is a $T_n$-function, then it also a $Z_n$-function.

Given $q\in\NN$ and properties
\[
 [P_1], \ldots, [P_q]\in\big\{[C],[T],[Z]\big\}
 \cup\big\{[T_n], [Z_n]\colon n\in\NN\big\}
 \cup\big\{[T_n^{\pmb{\lambda}}]\colon n\in\NN,\, \pmb{\lambda}\in\Lambda_n\big\},
\]
the subclass of $\Psi(X,\Theta)$ consisting of elements possessing the properties $[P_1],\ldots,[P_q]$ will be denoted by $\Psi[P_1,\ldots,P_q](X,\Theta)$, i.e.,
\[
  \Psi[P_1,\ldots,P_q](X,\Theta)
  :=\bigcap_{i=1}^q\Psi[P_i](X,\Theta).
\]

The following statement is a direct consequence of the property $[Z]$.

\begin{Lem}\label{Lem_psi_est_eq_2}
Let $\psi\in\Psi[Z](X,\Theta)$. Then, for each $n\in\NN$, $\pmb{x}=(x_1,\ldots,x_n)\in X^n$ and $t\in\Theta$, the inequality
 \[
  \vartheta_{n,\psi}(\pmb{x})\leq (<)\, t
 \]
 is valid if and only if
 \[
  \sum_{i=1}^n \psi(x_i,t)\leq (<)\, 0
 \]
 is true.
\end{Lem}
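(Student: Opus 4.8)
The plan is to reduce everything to the complete sign table that the $[Z]$-property provides. Fix $n\in\NN$ and $\pmb{x}=(x_1,\ldots,x_n)\in X^n$, and write $\vartheta:=\vartheta_{n,\psi}(\pmb{x})$ and $\psi_{\pmb{x}}(t)=\sum_{i=1}^n\psi(x_i,t)$ as in Definition \ref{Def_Tn}. Since $\psi\in\Psi[Z](X,\Theta)$, it is in particular a $Z_n$-function, so combining the point-of-sign-change property \eqref{psi_est_inequality} with the vanishing condition $\psi_{\pmb{x}}(\vartheta)=0$ at the point of sign change yields the full trichotomy
\[
 \psi_{\pmb{x}}(t)>0 \text{ for } t<\vartheta,\qquad \psi_{\pmb{x}}(\vartheta)=0,\qquad \psi_{\pmb{x}}(t)<0 \text{ for } t>\vartheta.
\]
Everything else is a short case analysis driven by this display.

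For the equivalence involving $\leq$, I would establish the forward implication by splitting $\vartheta\leq t$ into the subcases $t=\vartheta$ (where $\psi_{\pmb{x}}(t)=0$) and $t>\vartheta$ (where $\psi_{\pmb{x}}(t)<0$); in both subcases $\psi_{\pmb{x}}(t)\leq 0$ follows. For the converse I would argue by contraposition: if $\vartheta>t$, i.e.\ $t<\vartheta$, then the first relation in the display forces $\psi_{\pmb{x}}(t)>0$, contradicting $\psi_{\pmb{x}}(t)\leq 0$. The strict equivalence is handled symmetrically: $\vartheta<t$ gives $\psi_{\pmb{x}}(t)<0$ directly from the third relation, while assuming $\vartheta\geq t$ and examining the two subcases $t=\vartheta$ and $t<\vartheta$ produces $\psi_{\pmb{x}}(t)=0$ or $\psi_{\pmb{x}}(t)>0$ respectively, either of which contradicts $\psi_{\pmb{x}}(t)<0$.

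There is no substantial obstacle here; the one point that genuinely requires attention is the boundary case $t=\vartheta$, which is precisely where the $[Z]$-assumption (rather than the weaker $[T]$-assumption) is indispensable. Under $[T_n]$ alone the sign of $\psi_{\pmb{x}}$ at the point of sign change is uncontrolled, so the nonstrict version of the statement could fail at $t=\vartheta$; it is exactly the identity $\psi_{\pmb{x}}(\vartheta)=0$ that makes both the nonstrict and the strict inequalities line up correctly. I would flag this dependence explicitly rather than treating the endpoint casually.
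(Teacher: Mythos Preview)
Your proposal is correct and matches the paper's approach: the paper simply states that the lemma is ``a direct consequence of the property $[Z]$'' without giving any details, and your case analysis based on the trichotomy $\psi_{\pmb{x}}(t)>0$, $=0$, $<0$ according as $t<\vartheta$, $t=\vartheta$, $t>\vartheta$ is exactly the unpacking of that remark. Your observation about why $[Z]$ rather than merely $[T]$ is needed at the boundary $t=\vartheta$ is a nice addition that the paper leaves implicit.
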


The next result establishes a mean-type property of generalized $\psi$-estimators.

\begin{Pro}\label{Pro_psi_becsles_mean_prop}
Let $n\in\NN$ be fixed and $\psi\in\Psi[T_n](X,\Theta)$.
Then, for all $x_1,\ldots,x_n\in X$, we have
 \begin{align}\label{help_mean_prop}
   \min(\vartheta_{1,\psi}(x_1),\dots,\vartheta_{1,\psi}(x_n))
     \leq \vartheta_{n,\psi}(x_1,\ldots,x_n)\leq \max(\vartheta_{1,\psi}(x_1),\dots,\vartheta_{1,\psi}(x_n)).
 \end{align}
Furthermore, if $\psi\in\Psi[Z_n](X,\Theta)$ and $x_1,\ldots,x_n\in X$ are such that not all the values $\vartheta_{1,\psi}(x_1),\ldots,\vartheta_{1,\psi}(x_n)$ are equal, then both inequalities in
\eqref{help_mean_prop} are strict.
\end{Pro}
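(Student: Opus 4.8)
The plan is to reduce everything to the defining sign-change property \eqref{psi_est_inequality} together with the observation that the one-variable estimator is recovered from the $n$-variable one on constant tuples. First I would record that $\psi\in\Psi[T_n](X,\Theta)$ already forces $\psi$ to be a $T_1$-function, which is what makes the two outer terms of \eqref{help_mean_prop} meaningful: fixing $x\in X$ and applying \eqref{psi_est_inequality} to the constant tuple $(x,\dots,x)$, the function $\psi_{(x,\dots,x)}=n\,\psi(x,\cdot)$ has $\vartheta_{n,\psi}(x,\dots,x)$ as a point of sign change, and since multiplication by the positive factor $n$ does not affect signs, the same point is a point of sign change for $\psi(x,\cdot)$. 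Hence $\vartheta_{1,\psi}(x)=\vartheta_{n,\psi}(x,\dots,x)$ is well defined.

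For the two non-strict inequalities I would write $m:=\min_i\vartheta_{1,\psi}(x_i)$ and $M:=\max_i\vartheta_{1,\psi}(x_i)$, both lying in the open interval $\Theta$. To prove $\vartheta_{n,\psi}(\pmb{x})\le M$, I would fix an arbitrary $t\in\Theta$ with $t>M$. Then $t>\vartheta_{1,\psi}(x_i)$ for every $i$, so $\psi(x_i,t)<0$ for each $i$ by the $T_1$ sign-change property, whence $\psi_{\pmb{x}}(t)=\sum_i\psi(x_i,t)<0$. By \eqref{psi_est_inequality}, the inequality $\psi_{\pmb{x}}(t)<0$ rules out $t<\vartheta_{n,\psi}(\pmb{x})$, so $\vartheta_{n,\psi}(\pmb{x})\le t$; letting $t$ decrease to $M$ (possible since $\Theta$ is open and $M\in\Theta$) yields $\vartheta_{n,\psi}(\pmb{x})\le M$. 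The lower bound $\vartheta_{n,\psi}(\pmb{x})\ge m$ is entirely symmetric, testing with $t<m$, where every summand is positive.

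For the strict inequalities under the stronger hypothesis $\psi\in\Psi[Z_n](X,\Theta)$, I would first upgrade the constant-tuple reduction to the statement that $\psi$ is a $Z_1$-function: the identity $n\,\psi(x,\vartheta_{1,\psi}(x))=\psi_{(x,\dots,x)}(\vartheta_{n,\psi}(x,\dots,x))=0$ gives $\psi(x,\vartheta_{1,\psi}(x))=0$ for all $x$. Now suppose, contrary to strictness, that $\vartheta_{n,\psi}(\pmb{x})=M$. The property $[Z_n]$ gives $\sum_i\psi(x_i,M)=0$. Each summand with $\vartheta_{1,\psi}(x_i)<M$ is strictly negative, while each summand with $\vartheta_{1,\psi}(x_i)=M$ vanishes by the $Z_1$-property just established; since the values $\vartheta_{1,\psi}(x_i)$ are not all equal, at least one index (any one attaining the value $m<M$) contributes a strictly negative term, forcing $\psi_{\pmb{x}}(M)<0$, a contradiction. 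Hence $\vartheta_{n,\psi}(\pmb{x})<M$, and the strict lower inequality follows symmetrically.

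I expect the only genuinely delicate point to be the handling of the boundary summands in the strict case: the plain $[T_n]$ property says nothing about the sign of $\psi(x_i,t)$ exactly at $t=\vartheta_{1,\psi}(x_i)$, and it is precisely the zero property $[Z_n]$, transported to $[Z_1]$, that pins these terms down to $0$ and lets the ``sum of nonpositive terms vanishes only if all vanish'' argument go through. Everything else is a routine application of the sign-change definition together with the openness of $\Theta$.
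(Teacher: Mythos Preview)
Your proof is correct and follows essentially the same approach as the paper: reducing $[T_n]$ and $[Z_n]$ to $[T_1]$ and $[Z_1]$ via constant tuples, then using the sign of the summands outside $[m,M]$ for the non-strict inequalities and a contradiction at the endpoint via $[Z_n]$ together with $[Z_1]$ for the strict ones. The only cosmetic difference is that the paper concludes $t'\le t_0\le t''$ directly from the sign pattern rather than passing to an infimum as $t\downarrow M$, but this is not a substantive distinction.
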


\begin{proof}
It is easy to see that the properties $[T_n]$ and $[Z_n]$ imply $[T_1]$ and $[Z_1]$, respectively. If $n=1$, then there is nothing to show. We may assume that $n\geq2$. Let $\psi\in\Psi[T_n](X,\Theta)$. To see that $\psi\in\Psi[T_1](X,\Theta)$, let $x_1\in X$ and define $x_2=\cdots =x_n:=x_1$. Then, according to the property $[T_n]$ of $\psi$, the function
\Eq{*}{
  \Theta\ni t\mapsto\sum_{i=1}^n\psi(x_i,t)=n\psi(x_1,t)
}
has a unique point of sign change, hence the map $\Theta\ni t\mapsto\psi(x_1,t)$ has also a unique point of sign change for all $x_1\in X$.
Therefore, $\psi\in\Psi[T_1](X,\Theta)$.
 A similar argument shows that the property $[Z_n]$ implies $[Z_1]$.
In fact, in Barczy and P\'ales \cite[Proposition 2.15]{BarPal2}, we have shown a more general statement, namely, if $m\in\NN$ is a divisor of $n$, then the properties $[T_n]$ and $[Z_n]$ imply $[T_m]$ and $[Z_m]$, respectively.

To verify the first part of the assertion, assume that $\psi\in\Psi[T_n](X,\Theta)$, let $x_1,\ldots,x_n\in X$ and introduce the notations $t_0:=\vartheta_{n,\psi}(x_1,\ldots,x_n)$,
 \begin{align*}
   t':=\min(\vartheta_{1,\psi}(x_1),\dots,\vartheta_{1,\psi}(x_n)) \qquad \text{and}\qquad t'':=\max(\vartheta_{1,\psi}(x_1),\dots,\vartheta_{1,\psi}(x_n)).
 \end{align*}
Since \ $\psi$ \ is a $T_1$-function, we have $\psi(x_i,t)>0$, $i\in\{1,\ldots,n\}$, for all $t\in\Theta$ with $t<t'$,
and $\psi(x_i,t)<0$, $i\in\{1,\ldots,n\}$, for all $t\in\Theta$ with $t>t''$.
This yields that
 \[
    \sum_{i=1}^n \psi(x_i,t) \begin{cases}
                               >0 & \text{if $t\in\Theta$ and $t<t'$,}\\
                               <0 & \text{if $t\in\Theta$ and $t>t''$.}
                              \end{cases}
 \]
Since $\psi$ is a $T_n$-function, it implies that $t'\leq t_0\leq t''$, as desired.

Now let us suppose that $\psi\in\Psi[Z_n](X,\Theta)$ and that $x_1,\ldots,x_n\in X$ are chosen such that not all
 the values $\vartheta_{1,\psi}(x_1),\ldots,\vartheta_{1,\psi}(x_n)$ are equal.
Then $t'<t''$, and hence there exist $i\ne j$, $i,j\in\{1,\ldots,n\}$
 such that $t'=\vartheta_{1,\psi}(x_i)$ and $t''=\vartheta_{1,\psi}(x_j)$.
On the contrary, let us suppose that $t_0=t'$ or $t_0=t''$.
If $t_0=t'$ were true, then, using that $\psi$ is a $Z_n$-function,
 we have $\psi(x_k,t_0)\geq 0$, $k\in\{1,\ldots,n\}$, and $\sum_{k=1}^n \psi(x_k,t_0) = 0$, from which we can conclude that $\psi(x_k,t_0)= 0$, $k\in\{1,\ldots,n\}$.
On the other hand, $t_0<t''=\vartheta_{1,\psi}(x_j)$ implies $\psi(x_j,t_0) > 0$, yielding us to a contradiction.
Similarly, if $t_0=t''$ were true, then, using that $\psi$ is a $Z_n$-function,
 we have $\psi(x_k,t_0)\leq 0$, $k\in\{1,\ldots,n\}$, and $\sum_{k=1}^n \psi(x_k,t_0) = 0$, from which we get that $\psi(x_k,t_0)=0$, $k\in\{1,\ldots,n\}$.
However, $\vartheta_{1,\psi}(x_i)=t'<t_0$ yields $\psi(x_i,t_0) < 0$, which leads to a contradiction as well.
\end{proof}

\begin{Pro}\label{Pro_density}
Let $\psi\in\Psi[T](X,\Theta)$ and assume that, for all $x,y\in X$ with $\vartheta_{1,\psi}(x)<\vartheta_{1,\psi}(y)$, the map
\Eq{umap}{
   (\vartheta_{1,\psi}(x), \vartheta_{1,\psi}(y))\ni u \mapsto -\frac{\psi(x,u)}{\psi(y,u)}
}
is positive and strictly increasing. Then the set
\Eq{*}{
  S_\psi:=\{\vartheta_{n,\psi}(x_1,\ldots,x_n)\mid n\in\NN,\, x_1,\ldots,x_n\in X\}
}
is a dense subset of $\conv(\vartheta_{1,\psi}(X))$.
\end{Pro}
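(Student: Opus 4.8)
The plan is to reduce the density statement to a local claim about pairs of observations and then to realize intermediate values as points of sign change of weighted sums built from two repeated observations. First, note that $\vartheta_{1,\psi}(X)\subseteq S_\psi$ (take $n=1$), while Proposition~\ref{Pro_psi_becsles_mean_prop} gives $\vartheta_{n,\psi}(x_1,\ldots,x_n)\in[\min_i\vartheta_{1,\psi}(x_i),\max_i\vartheta_{1,\psi}(x_i)]\subseteq\conv(\vartheta_{1,\psi}(X))$, so that $S_\psi\subseteq\conv(\vartheta_{1,\psi}(X))$. Writing $L:=\inf\vartheta_{1,\psi}(X)$ and $R:=\sup\vartheta_{1,\psi}(X)$, it then suffices to show $(L,R)\subseteq\overline{S_\psi}$, since any endpoint belonging to $\conv(\vartheta_{1,\psi}(X))$ is an attained value of $\vartheta_{1,\psi}$ and hence already lies in $S_\psi$. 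As every $c\in(L,R)$ satisfies $a:=\vartheta_{1,\psi}(x)<c<\vartheta_{1,\psi}(y)=:b$ for suitable $x,y\in X$, the whole assertion reduces to the local claim: whenever $a<b$ are two values of $\vartheta_{1,\psi}$, every $c\in(a,b)$ lies in $\overline{S_\psi}$.

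To establish the local claim I would fix $x,y$ with $\vartheta_{1,\psi}(x)=a<b=\vartheta_{1,\psi}(y)$ and, for $k,m\in\NN$, apply the property $[T_{k+m}]$ to the tuple consisting of $k$ copies of $x$ and $m$ copies of $y$. The associated element of $S_\psi$ is the unique point of sign change $t_{k,m}$ of $t\mapsto k\psi(x,t)+m\psi(y,t)$. For $t\in(a,b)$ one has $\psi(x,t)<0<\psi(y,t)$, so dividing by $\psi(y,t)>0$ shows that this sum is positive exactly when $g(t)<m/k$, where $g(t):=-\psi(x,t)/\psi(y,t)$ is the map \eqref{umap}, which is positive and strictly increasing on $(a,b)$ by hypothesis. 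Consequently $g(t)<m/k$ for $t\in(a,t_{k,m})$ and $g(t)>m/k$ for $t\in(t_{k,m},b)$; that is, $t_{k,m}$ is precisely the point at which the strictly increasing function $g$ crosses the level $m/k$.

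I would then realize $c$ as a limit of such crossing points. Put $\rho:=g(c)$, a positive real number, and choose rationals $r_n=m_n/k_n\to\rho$; for large $n$ these land strictly between the one-sided limits of $g$ at the endpoints, so the crossing occurs inside $(a,b)$ and the points $t_n:=t_{k_n,m_n}\in S_\psi$ are well defined. A sandwich argument using only the strict monotonicity of $g$ forces $t_n\to c$: if some subsequence had $t_n\geq c+\varepsilon$, then $g(c+\varepsilon/2)<r_n$ for those $n$, which is impossible since $r_n\to g(c)<g(c+\varepsilon/2)$; the case $t_n\leq c-\varepsilon$ is symmetric. Hence $c\in\overline{S_\psi}$, which proves the local claim and therefore the whole statement.

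I expect the main obstacle to be that $\psi$ is only assumed to be a $T$-function, so neither $\psi$ nor the ratio $g$ need be continuous: $g$ may have jumps and its range may skip the level $m/k$. The decisive point is that the characterization of $t_{k,m}$ as a crossing point and the final limit argument use \emph{only} the strict monotonicity and positivity of $g$, never its continuity, so jumps cause no difficulty—indeed a level falling inside a jump of $g$ simply pins $t_{k,m}$ to the jump location. The only additional bookkeeping is to note that the ratios $m/k$ exhaust all positive rationals and that, since $g$ is strictly increasing, $\rho=g(c)$ lies strictly between the left and right endpoint limits of $g$, which guarantees that $r_n$ enters the admissible range of levels for all large $n$.
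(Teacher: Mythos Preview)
Your proof is correct and rests on the same core idea as the paper's: reduce to pairs $x,y$, use tuples $(\underbrace{x,\dots,x}_k,\underbrace{y,\dots,y}_m)$, and exploit the strict monotonicity of the ratio $g(t)=-\psi(x,t)/\psi(y,t)$ together with a rational $m/k$ to locate the sign change point. The paper's execution is a touch more direct: rather than fixing a single target $c$ and building a sequence $t_n\to c$, it takes an arbitrary subinterval $[s,t]\subset(\vartheta_{1,\psi}(x),\vartheta_{1,\psi}(y))$, chooses one rational $m/n$ with $g(s)<m/n<g(t)$, and concludes immediately that $\vartheta_{n+m,\psi}(x,\dots,x,y,\dots,y)\in[s,t]$, which is the density statement without any limiting argument.
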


\begin{proof} According to Proposition \ref{Pro_psi_becsles_mean_prop}, it follows that $S_\psi\subseteq \conv(\vartheta_{1,\psi}(X))=:C$.
Therefore, if $C$ is a singleton, then there is nothing to prove.
Thus, we may assume that the convex set $C$ is a nondegenerate subinterval of $\Theta$.
To prove the density of $S_\psi$ in $C$, let $s,t\in C$ with $\inf C<s<t<\sup C$.
Since $\inf C = \inf \vartheta_{1,\psi}(X)$ and $\sup C = \sup \vartheta_{1,\psi}(X)$,
 there exist $x,y\in X$ such that $\vartheta_{1,\psi}(x)<s<t<\vartheta_{1,\psi}(y)$. Consequently,
\[
   0< -\frac{\psi(x,s)}{\psi(y,s)} < -\frac{\psi(x,t)}{\psi(y,t)},
\]
and hence one can choose $n,m\in\NN$ such that
\[
  0 < -\frac{\psi(x,s)}{\psi(y,s)} < \frac{m}{n} < -\frac{\psi(x,t)}{\psi(y,t)}.
\]
Using that $\psi(y,s)>0$ and $\psi(y,t)>0$, we have that
 \[
  n\psi(x,s) + m\psi(y,s) >0 \qquad \text{and}\qquad   n\psi(x,t) + m\psi(y,t) < 0.
 \]
Since $\psi$ has the property $[T_{n+m}]$, we can conclude that
 \[
     s \leq \vartheta_{n+m,\psi}(\underbrace{x,\ldots,x}_{n}, \underbrace{y,\ldots,y}_{m}) \leq t.
 \]
As a consequence, we have that $[s,t]\cap S_\psi$ is nonempty.
Since $[s,t]$ was an arbitrary nondegenerate subinterval in the interior of $C$, it follows that $S_\psi$ is dense in $C$.
\end{proof}

\begin{Rem}\rm In our recent paper Barczy and P\'ales \cite{BarPal2}, several implications between the property $[T]$ of a function $\psi\in\Psi(X,\Theta)$ and the monotonicity properties of the map \eqref{umap} have been established.
Among others, we proved that if $\psi$ possesses the property $[T]$, then for all $x,y\in X$ with $\vartheta_{1,\psi}(x)<\vartheta_{1,\psi}(y)$, the map \eqref{umap} is positive and (not necessarily strictly) increasing.
On the other hand, if $\psi$ has the properties $[T]$ and $[Z_1]$, and, for all $x,y\in X$ with $\vartheta_{1,\psi}(x)<\vartheta_{1,\psi}(y)$, the map \eqref{umap} is strictly increasing, then $\psi$ possesses the property $[T_n^{\pmb{\lambda}}]$ for all $n\in\NN$ and $\pmb{\lambda}=(\lambda_1,\ldots,\lambda_n)\in\Lambda_n$.
\proofend
\end{Rem}

\begin{Lem}\label{Lem_psi_est_cont}
Let $\psi\in\Psi[T](X,\Theta)$. Then, for all $k\in\NN$ and $x,y_1,\dots,y_k\in X$, we have
 \[
    \lim_{n\to\infty} \vartheta_{n+k,\psi}({\underbrace{x,\ldots,x}_{n}},y_1,\dots,y_k) = \vartheta_{1,\psi}(x).
 \]
The same statement holds if $\psi$ has the properties $[Z_1]$ and $[T_2^\blambda]$ for all $\blambda\in\Lambda_2$.
\end{Lem}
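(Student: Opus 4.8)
The plan is to read off the limit from the sign-change characterization of the estimator, using that the $n$ repeated entries equal to $x$ eventually dominate the fixed contribution of $y_1,\dots,y_k$.

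First I would dispose of the case $\psi\in\Psi[T]$. Put $\vartheta:=\vartheta_{1,\psi}(x)$ and abbreviate $\vartheta_n^{*}:=\vartheta_{n+k,\psi}(\underbrace{x,\dots,x}_{n},y_1,\dots,y_k)$; since $\psi$ is in particular a $T_1$-function, the point $\vartheta$ lies in the open interval $\Theta$, so there are elements of $\Theta$ on both sides of $\vartheta$, arbitrarily close to it. Fix any $t^{+}\in\Theta$ with $t^{+}>\vartheta$. As $\vartheta$ is the point of sign change of $t\mapsto\psi(x,t)$, we have $\psi(x,t^{+})<0$, hence
\[
  \sum_{i=1}^{n}\psi(x,t^{+})+\sum_{j=1}^{k}\psi(y_j,t^{+})
  =n\,\psi(x,t^{+})+\sum_{j=1}^{k}\psi(y_j,t^{+})\longrightarrow-\infty
  \qquad(n\to\infty),
\]
because the second sum is a fixed real number. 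Thus the left-hand side is negative for all large $n$, and the defining property \eqref{psi_est_inequality} of the $T_{n+k}$-function $\psi$ (in its contrapositive form) forces $\vartheta_n^{*}\leq t^{+}$ for all such $n$; consequently $\limsup_{n\to\infty}\vartheta_n^{*}\leq t^{+}$, and letting $t^{+}\downarrow\vartheta$ yields $\limsup_{n\to\infty}\vartheta_n^{*}\leq\vartheta$. A mirror-image argument with some $t^{-}\in\Theta$, $t^{-}<\vartheta$ (where $\psi(x,t^{-})>0$, so the sum tends to $+\infty$ and \eqref{psi_est_inequality} gives $\vartheta_n^{*}\geq t^{-}$) produces $\liminf_{n\to\infty}\vartheta_n^{*}\geq\vartheta$, and the two bounds together give the claim.

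For the second assertion the limiting argument is identical; the genuine issue is that $\vartheta_n^{*}$ is only meaningful once $\psi$ is known to be a $T_{n+k}$-function, and this well-definedness is not granted outright by the weaker hypotheses. When $k=1$ it is immediate, since $n\,\psi(x,t)+\psi(y_1,t)$ is exactly the weighted two-variable expression of \eqref{psi_est_inequality_weighted} with weight $(n,1)\in\Lambda_2$, so $[T_2^{(n,1)}]$ furnishes its unique point of sign change and the argument above applies verbatim, the strict signs of $\psi(x,\cdot)$ away from $\vartheta$ again following from the $T_1$-property (a special case of the hypotheses). For general $k$ I would instead invoke the results of Barczy and P\'ales \cite{BarPal2} linking the two-variable weighted property $[T_2^\blambda]$ (for every $\blambda\in\Lambda_2$) with the positivity and strict monotonicity of the map \eqref{umap} and with the higher properties $[T_n^\blambda]$: under $[Z_1]$ these hypotheses already entail $[T_n^\blambda]$ for all $n\in\NN$ and $\blambda\in\Lambda_n$, in particular $\psi\in\Psi[T]$, whence $\vartheta_n^{*}$ is defined and the first part applies.

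I expect the passage from the two-variable hypotheses to the full property $[T]$ to be the main obstacle: in the first part the sign-change structure of every $(n+k)$-fold sum is available by assumption, whereas in the second part it must be manufactured from the two-variable weighted property. Once that propagation is secured, everything rests on the elementary fact that $n\,\psi(x,t)$ swamps the bounded perturbation $\sum_{j=1}^{k}\psi(y_j,t)$ and so fixes the sign of the full sum at every $t\neq\vartheta$ for all large $n$.
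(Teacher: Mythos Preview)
Your proof is correct and follows essentially the same approach as the paper: both arguments fix $t$ on either side of $\vartheta_{1,\psi}(x)$, observe that $n\,\psi(x,t)$ eventually dominates the fixed perturbation $\sum_j\psi(y_j,t)$ so as to determine the sign of the full sum, and then squeeze the point of sign change into an arbitrarily small interval around $\vartheta_{1,\psi}(x)$. For the second assertion the paper likewise reduces to the first by citing the result in \cite{BarPal2} (specifically Corollary~2.11 there) that $[Z_1]$ together with $[T_2^{\blambda}]$ for all $\blambda\in\Lambda_2$ implies $[T]$; your separate treatment of $k=1$ is correct but unnecessary once that implication is invoked.
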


\begin{proof}
Let $k\in\NN$ and let $x,y_1,\dots,y_k\in X$ be fixed arbitrarily.
For each $n\in\NN$, denote
 \[
   t_n:=\vartheta_{n+k,\psi}({\underbrace{x,\ldots,x}_{n}},y_1,\dots,y_k).
 \]
We need to show that $t_n$ converges to $\vartheta_{1,\psi}(x)$ as $n\to\infty$.
Let $t',t''\in\Theta$ be arbitrary such that $t'<\vartheta_{1,\psi}(x)<t''$ (since $\Theta$ is open such $t'$ and $t''$ exist).
Then
 \[
  n\psi(x,t')+\sum_{i=1}^k\psi(y_i,t')
  =n\bigg(\psi(x,t')+\frac1n\sum_{i=1}^k\psi(y_i,t')\bigg)>0
 \]
 if $n$ is large enough, because $t'<\vartheta_{1,\psi}(x)$ and
 \[
   \lim_{n\to\infty}\bigg(\psi(x,t')+\frac1n\sum_{i=1}^k\psi(y_i,t')\bigg)=\psi(x,t')>0.
 \]
Similarly, we get that the inequality
 \[
  n\psi(x,t'')+\sum_{i=1}^k\psi(y_i,t'')<0
 \]
 is valid if $n$ is large enough.
Therefore, the point of sign change $t_n$ of the function
 \[
  \Theta\ni t\mapsto n\psi(x,t)+\sum_{i=1}^k\psi(y_i,t)
 \]
is in the interval $[t',t'']$ for $n$ large enough, that is, there exists $n_0\in\NN$ such that $t_n\in[t',t'']$ for each $n\geq n_0$, $n\in\NN$.
Since $t',t''\in\Theta$ were arbitrary with $t'< \vartheta_{1,\psi}(x)<t''$, this implies that $t_n$ converges to $ \vartheta_{1,\psi}(x)$ as $n\to\infty$.

If $\psi$ has the properties $[Z_1]$ and $[T_2^\blambda]$ for all $\blambda\in\Lambda_2$, then, by Corollary 2.11 in Barczy and P\'ales \cite{BarPal2}, we have that $\psi$ is also a $T$-function,
consequently, the same conclusion holds.
\end{proof}

\section{Historical notes about equality of OLSE and BLUE in general linear models}\label{Section_gen_regression}

Consider a general linear model
 \begin{align}\label{gen_lin_mod}
  \EE(\Beta)= \bX \bbeta \qquad \text{with}\qquad \cov(\Beta)=\bV,
 \end{align}
 where $\Beta$ is an $\RR^n$-valued random vector,
 $\bX\in\RR^{n\times q}$ with $q<n$, $q,n\in\NN$, $\bbeta\in\RR^q$ is a vector of unknown (regression) parameters,
 $\bV\in\RR^{n\times n}$ is a known, symmetric and positive semidefinite matrix.
Note that $\bV$ is positive definite if and only if $\bV$ has full rank $n$.
Indeed, $\bV$ is positive (semi)definite if and only if all its eigenvalues are (non-negative) positive.
Hence, using that the determinant of $\bV$ equals the product of its eigenvalues, we have that $\bV$ is positive definite
 if and only if $\det(\bV)\ne 0$, or equivalently, $\rank(\bV)=n$, as desired.

The ordinary least squares estimator (OLSE) of $\bbeta$ is defined to be
 \[
  \widehat\bbeta:=\argmin_{\bbeta\in\RR^q}(\Beta - \bX\bbeta)^\top(\Beta - \bX\bbeta),
 \]
 and the OLSE of $\bX\bbeta$ is defined to be $\bX\widehat\bbeta$.
The best linear unbiased estimator (BLUE) $\bbeta^*$ of $\bbeta$ is a linear estimator $\bG\Beta$ such that $\EE(\bG\Beta)=\bbeta$
 (i.e., $\bG\Beta$ is an unbiased estimator of $\bbeta$)
 and for any other linear unbiased estimator $\bM\Beta$ of $\bbeta$, we have that $\cov(\bM\Beta) - \cov(\bG\Beta)\in\RR^{q\times q}$
 is nonnegative definite, where $\bG,\bM\in\RR^{q\times n}$.
Similarly, the BLUE estimator $\bbeta^*$ of $\bX\bbeta$ is a linear estimator $\bG\Beta$ such that $\EE(\bG\Beta)=\bX\bbeta$
 (i.e., $\bG\Beta$ is an unbiased estimator of $\bX\bbeta$)
 and for any other linear unbiased estimator $\bM\Beta$ of $\bX\bbeta$, we have that $\cov(\bM\Beta) - \cov(\bG\Beta)\in\RR^{n\times n}$ is nonnegative definite,
 where $\bG,\bM\in\RR^{n\times n}$.
In the special case when $\bX$ and $\bV$ are both of full rank (i.e., when $\rank(X)=q$ and $\rank(V)=n$),
 the OLSE and BLUE of $\bbeta$ take the following forms
 \[
   \widehat\bbeta=(\bX^\top \bX)^{-1}\bX^\top \Beta,
 \]
 and
 \[
  \bbeta^*=(\bX^\top \bV^{-1}\bX)^{-1}\bX^\top \bV^{-1}\Beta.
 \]
In the general case, there are also explicit formulae for $\widehat\bbeta$ and $\bbeta^*$ involving Moore-Penrose inverses, see, e.g.,
 Lemma 1 in Tian and Wiens \cite{TiaWie}.

Puntanen and Styan \cite{PunSty} gave a good historical overview on the (almost sure) equality of the OLSE
 and the BLUE of $\bbeta$ and those of $\bX\bbeta$, respectively.
It is well-known that OLSE and BLUE of $\bbeta$ can coincide even if $\bV$ is not a multiple of the $n\times n$ identity matrix $\bI_n$.
In particular, if $q=1$ and $\bX=(1,\ldots,1)^\top \in\RR^{n\times 1}$, then the OLSE of $\bbeta$ is the arithmetic mean
 $\widehat\bbeta = (\eta_1+\cdots+\eta_n)/n$ with the notation $\Beta=:(\eta_1,\ldots,\eta_n)$,
 and one can prove that if the covariance matrix $\bV$ has all of its row totals equal to each other,
 then the OLSE of $\bbeta$ coincides with the BLUE of $\bbeta$.
For several necessary and sufficient conditions on the equality of the OLSE and BLUE of $\bX\bbeta$ have been recalled
 in Section 2 in Puntanen and Styan \cite{PunSty}.
In the special case when $\bX$ and $\bV$ are both of full rank (i.e., when $\rank(X)=q$ and $\rank(V)=n$),
 one can introduce the so-called relative goodness (also called D-relative efficiency) of the OLSE of $\bbeta$ defined by
 \[
   \kappa:=\frac{\det(\cov(\bbeta^*))}{\det(\cov(\widehat\bbeta))}
          = \frac{(\det(\bX^\top\bX))^2}{\det(\bX^\top\bV\bX) \det(\bX^\top\bV^{-1}\bX)} ,
 \]
 see, e.g., Puntanen and Styan \cite[Section 3.1]{PunSty} or Tian and Wiens \cite[page 1267]{TiaWie}.
One can prove that $\kappa\in(0,1]$, and $\kappa=1$ holds if and only if the OLSE and BLUE of $\bbeta$ coincide.

Lee \cite{Lee} considered the general linear model \eqref{gen_lin_mod} such that $\bX$ and $\bV$ are both of full rank.
He derived necessary and sufficient conditions for the equality of the BLUEs $\bbeta^*_{\bV_1}$ and $\bbeta^*_{\bV_2}$
 in two general linear models with positive definite covariance matrices $\bV_1$ and $\bV_2$ and with a common matrix $\bX$ having full rank $q$.
Among others, it was proved that $\bbeta^*_{\bV_1} = \bbeta^*_{\bV_2}$ holds if and only if there exists a non-singular matrix $\bA\in\RR^{q\times q}$
 such that $\bV_1^{-1}\bX = \bV_2^{-1}\bX\bA$.
For further necessary and sufficient conditions, see also Lu and Schmidt \cite[Theorem 2]{LuSch}.
If one chooses  $\bV_2:=\bI_n$, then the problem considered above by Lee \cite{Lee}
 is equivalent to finding necessary and sufficient conditions for the equality of the BLUE $\bbeta^*_{\bV_1}$
 and the OLSE $\bbeta^*_{\bI_n}=\widehat\bbeta$.
Lee \cite[Section 4]{Lee} derived several other sufficient, but not necessary conditions for the equality of two BLUEs as well.
For example, if $\bX$ has full rank $q$, $\bV_1$ is positive semidefinite, $\bGamma\in\RR^{q\times q}$ is symmetric and
 $\bV_2:=\bV_1+\bX\bGamma \bX^\top$, then  $\bbeta^*_{\bV_1} = \bbeta^*_{\bV_2}$.

In the special case when $\bX$ and $\bV$ are both of full rank, Kr\"amer et al.\ \cite{KraBarFie}
 derived a necessary and sufficient condition for the equality of the OLSE and BLUE of a subset of the regression parameters
 $\beta_1,\ldots,\beta_q$ in a general linear model \eqref{gen_lin_mod}, where $\bbeta^\top=:(\beta_1,\ldots,\beta_q)$.
More precisely, rewriting a general linear model \eqref{gen_lin_mod} in the form $\EE(\Beta)=\bX_1\bbeta_1+\bX_2\bbeta_2$
 and denoting by $\widehat\bbeta_2$ and $\bbeta^*_2$ the respective subvectors of $\widehat\bbeta$ and $\bbeta^*$,
 a necessary and sufficient condition has been derived for the equality of  $\widehat\bbeta_2$ and $\bbeta^*_2$,
 where $\bbeta^\top = (\bbeta_1^\top,\bbeta_2^\top)\in\RR^{q_1}\times\RR^{q_2}$ and $\bX=(\bX_1,\bX_2)\in\RR^{n\times q_1}\times\RR^{n\times q_2}$.

Tian and Wiens \cite[Theorem 5]{TiaWie} derived necessary and sufficient conditions for the equality and proportionality of the OLSE and BLUE of $\bX\bbeta$, respectively.
In particular, it turned out that if the OLSE and BLUE of $\bX\bbeta$ are proportional to each other
 (i.e., there exists a $\lambda\in\RR$ such that $\mathrm{OLSE}(\bX\bbeta)=\lambda\cdot \mathrm{BLUE}(\bX\bbeta)$ with probability one),
 then the two estimators in question are equal with probability 1.

Tian and Puntanen \cite{TiaPun} derived some necessary and sufficient conditions for
 the equality of OLSE and BLUE of the unknown (regression) parameters under a
 general linear model and its linearly transformed linear model.

Lu and Schmidt \cite[Theorem 3]{LuSch} derived necessary and sufficient conditions for the equality of BLUE
 and so-called Amemiya-Cragg estimator of $\bbeta$.

Recently, Gong \cite{Gon} has reconsidered the equality problem of OLSE and BLUE for so-called seemingly unrelated regression models.
Jiang and Sun \cite{JiaSun} have investigated necessary and sufficient conditions for the equality of OLSE and BLUE of the whole (or partial)
 set of regression parameters in a general linear model with linear parameter restrictions.

\section{Comparison and equality of generalized $\psi$ estimators}
\label{Section_comp_equal_psi_est}

Given $\psi,\varphi\in \Psi(X,\Theta)$, we are going to establish necessary and sufficient conditions in order that
 the comparison inequality $\vartheta_{n,\psi}\leq \vartheta_{n,\varphi}$ be valid on $X^n$ for all $n\in\NN$.

As a first result, provided that $\psi,\varphi\in\Psi(X,\Theta)$, $\varphi$ is a $Z$-function and $\psi$ possesses the properties $[T]$ and $[Z_1]$, we give necessary and sufficient conditions in order that $\vartheta_{n,\psi}\leq \vartheta_{n,\varphi}$ be valid on $X^n$ for all $n\in\NN$.

For a function $\psi\in\Psi[T_1](X,\Theta)$, we introduce the notation
 \begin{align}\label{theta_psi}
 \Theta_\psi:=\big\{t\in\Theta\mid \exists\, x,y\in X: \vartheta_{1,\psi}(x)<t<\vartheta_{1,\psi}(y)\big\}.
 \end{align}
Observe that $\Theta_\psi$ is open. Indeed,
if it is the empty set, then it is open trivially. Otherwise, $\Theta_\psi$ is the union of all open intervals $(\vartheta_{1,\psi}(x),\vartheta_{1,\psi}(y))$,
 where $x,y\in X$ are such that $\vartheta_{1,\psi}(x)<\vartheta_{1,\psi}(y)$.
In fact, $\Theta_\psi$ is nothing else but the interior of the convex hull of $\vartheta_{1,\psi}(X)$.
Indeed, any interior point $s$ of the convex hull of a subset $S$ of $\RR^d$
 is an interior point of the convex hull of some subset (possibly depending on $s$) of $S$ containing at most $2d$ points, see Gustin \cite{Gus}.
Consequently, $\Theta_\psi$ is an open (possibly degenerate) interval.

\begin{Thm}\label{Lem_psi_est_eq_5}
Let $\psi\in\Psi[T,Z_1](X,\Theta)$ and $\varphi\in\Psi[Z](X,\Theta)$. Then the following assertions are equivalent to each other:
\vspace{-3mm}
\begin{enumerate}[(i)]
 \item The inequality
 \begin{align}\label{psi_est_inequality_2}
   \vartheta_{n,\psi}(x_1,\ldots,x_n)\leq \vartheta_{n,\varphi}(x_1,\ldots,x_n)
\end{align}
holds for each $n\in\NN$ and $x_1,\dots,x_n\in X$.
 \item The inequality
 \begin{align}\label{psi_est_inequality_2.5}
  \vartheta_{k+m,\psi}(\underbrace{x,\ldots,x}_{k}, \underbrace{y,\ldots,y}_{m})
  \leq \vartheta_{k+m,\varphi}(\underbrace{x,\ldots,x}_{k}, \underbrace{y,\ldots,y}_{m})
\end{align}
holds for each $k,m\in\NN$ and $x,y\in X$.
 \item For all $x\in X$, we have $\vartheta_{1,\psi}(x)\leq \vartheta_{1,\varphi}(x)$, and the inequality
 \begin{align}\label{psi_est_inequality_3}
    \psi(x,t) \varphi(y,t) \leq \psi(y,t) \varphi(x,t)
 \end{align}
 is valid for all $t\in\Theta$ and for all $x,y\in X$ with $\vartheta_{1,\varphi}(x)<t<\vartheta_{1,\varphi}(y)$.
 \item For all $x\in X$, we have $\vartheta_{1,\psi}(x)\leq \vartheta_{1,\varphi}(x)$, and there exists a nonnegative function $p:\Theta_\varphi\to\RR_+$ such that
 \begin{align}\label{psi_est_inequality_5}
    \psi(z,t)\leq p(t)\varphi(z,t), \qquad z\in X,\, t\in\Theta_\varphi.
 \end{align}
\end{enumerate}
\end{Thm}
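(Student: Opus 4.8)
The plan is to prove the four conditions equivalent by establishing the cyclic chain $(i)\Rightarrow(ii)\Rightarrow(iii)\Rightarrow(iv)\Rightarrow(i)$. The implication $(i)\Rightarrow(ii)$ is immediate, since $(ii)$ is merely the restriction of $(i)$ to samples of the special form $(\underbrace{x,\ldots,x}_{k},\underbrace{y,\ldots,y}_{m})$. Throughout I would rely on three tools already available: Lemma \ref{Lem_psi_est_eq_2}, which for the $Z$-function $\varphi$ converts comparisons of $\vartheta_{k+m,\varphi}$ with a given $t$ into the sign of $\sum_i\varphi(x_i,t)$; the mean-type property of Proposition \ref{Pro_psi_becsles_mean_prop}, which in particular gives $\vartheta_{n,\psi}(x,\ldots,x)=\vartheta_{1,\psi}(x)$ and the analogous identity for $\varphi$; and, for the mere $T$-function $\psi$, the one-sided implications $\psi_{\pmb{x}}(t)>0\Rightarrow t\leq\vartheta_{n,\psi}(\pmb{x})$ and $\psi_{\pmb{x}}(t)\leq 0\Rightarrow t\geq\vartheta_{n,\psi}(\pmb{x})$ read off from the definition of a point of sign change.

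For $(ii)\Rightarrow(iii)$, the inequality $\vartheta_{1,\psi}(x)\leq\vartheta_{1,\varphi}(x)$ follows at once by taking $x=y$ in $(ii)$ and applying the mean-type identity. The substance is the pointwise inequality \eqref{psi_est_inequality_3}, which I would prove by contradiction together with a density argument. Fix $x,y,t$ with $\vartheta_{1,\varphi}(x)<t<\vartheta_{1,\varphi}(y)$, so that $\varphi(x,t)<0<\varphi(y,t)$, and set $r:=-\varphi(x,t)/\varphi(y,t)>0$. A short computation shows that the failure of \eqref{psi_est_inequality_3} at $(x,y,t)$ is equivalent to $\psi(x,t)+r\psi(y,t)>0$. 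By continuity of $s\mapsto\psi(x,t)+s\psi(y,t)$, this quantity stays positive for $s$ slightly below $r$; choosing a rational $m/k\in(0,r)$ in that range yields $k\psi(x,t)+m\psi(y,t)>0$ and, since $m/k<r$, also $k\varphi(x,t)+m\varphi(y,t)<0$. The first inequality gives $\vartheta_{k+m,\psi}(\underbrace{x,\ldots,x}_{k},\underbrace{y,\ldots,y}_{m})\geq t$ by the $T$-property of $\psi$, while the second gives $\vartheta_{k+m,\varphi}(\underbrace{x,\ldots,x}_{k},\underbrace{y,\ldots,y}_{m})<t$ by Lemma \ref{Lem_psi_est_eq_2}, contradicting $(ii)$. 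I expect this step to be the \emph{main obstacle}: it is where the asymmetry between the hypotheses on $\psi$ (only $T$, hence one-sided) and on $\varphi$ ($Z$, hence the sharp biconditional) is exploited, and where one must place the rational weights on the correct side of the generally irrational threshold $r$.

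For $(iii)\Rightarrow(iv)$, dividing \eqref{psi_est_inequality_3} by the negative number $\varphi(x,t)\varphi(y,t)$ recasts it as $\psi(y,t)/\varphi(y,t)\leq\psi(x,t)/\varphi(x,t)$ whenever $\varphi(y,t)>0>\varphi(x,t)$. Hence, for each fixed $t\in\Theta_\varphi$ (where both sign classes are nonempty), the supremum over $\{z:\varphi(z,t)>0\}$ of $\psi(z,t)/\varphi(z,t)$ does not exceed the infimum over $\{z:\varphi(z,t)<0\}$ of the same ratio, and I would define $p(t)$ to be this infimum. Nonnegativity of $p$ comes from observing that for $z$ with $\varphi(z,t)<0$ one has $t>\vartheta_{1,\varphi}(z)\geq\vartheta_{1,\psi}(z)$, so $\psi(z,t)<0$ and the ratio is positive. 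Finally I would verify \eqref{psi_est_inequality_5} by cases on the sign of $\varphi(z,t)$, the case $\varphi(z,t)=0$ being handled by $\vartheta_{1,\psi}(z)\leq\vartheta_{1,\varphi}(z)=t$ together with the $[Z_1]$ property, which forces $\psi(z,t)\leq 0$.

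For $(iv)\Rightarrow(i)$, fix a sample $\pmb{x}=(x_1,\ldots,x_n)$ and put $t_0:=\vartheta_{n,\varphi}(\pmb{x})$; by the $T$-property of $\psi$ it suffices to show $\sum_{i=1}^n\psi(x_i,t)\leq 0$ for every $t>t_0$. The mean-type property gives $t_0\geq\inf\vartheta_{1,\varphi}(X)$, so every such $t$ lies either in $\Theta_\varphi$ or to the right of it. If $t\in\Theta_\varphi$, then \eqref{psi_est_inequality_5} and $p(t)\geq 0$ give $\sum_i\psi(x_i,t)\leq p(t)\sum_i\varphi(x_i,t)\leq 0$, the last inequality because $t>t_0$ forces $\sum_i\varphi(x_i,t)<0$. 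If $t\geq\sup\Theta_\varphi$, then $t\geq\vartheta_{1,\varphi}(x_i)\geq\vartheta_{1,\psi}(x_i)$ for each $i$, so each $\psi(x_i,t)\leq 0$ (using $[Z_1]$ in the boundary case $t=\vartheta_{1,\psi}(x_i)$) and the sum is again nonpositive. The degenerate case $\Theta_\varphi=\emptyset$, where $\vartheta_{1,\varphi}$ is constant, is covered directly by the mean-type property for $\psi$. This establishes $\vartheta_{n,\psi}(\pmb{x})\leq t_0=\vartheta_{n,\varphi}(\pmb{x})$ and closes the cycle.
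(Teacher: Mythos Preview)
Your proof is correct and follows essentially the same cyclic scheme $(i)\Rightarrow(ii)\Rightarrow(iii)\Rightarrow(iv)\Rightarrow(i)$ as the paper, with the same key mechanisms (rational approximation for $(ii)\Rightarrow(iii)$, the sup/inf separation to build $p$ for $(iii)\Rightarrow(iv)$, and summing \eqref{psi_est_inequality_5} for $(iv)\Rightarrow(i)$). The only notable difference is in $(iv)\Rightarrow(i)$: the paper evaluates once at $t_0=\vartheta_{n,\varphi}(\pmb{x})$, using the strict mean inequality of Proposition~\ref{Pro_psi_becsles_mean_prop} to place $t_0\in\Theta_\varphi$, whereas you evaluate at all $t>t_0$ and split according to whether $t\in\Theta_\varphi$ or $t\geq\sup\Theta_\varphi$, which is equally valid and avoids the strict part of the mean property.
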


\begin{proof}
The implication (i)$\Rightarrow$(ii) is obvious by taking $n=k+m$ in assertion (i).

(ii)$\Rightarrow$(iii).
Assume that (ii) holds.
Then the inequality  \eqref{psi_est_inequality_2.5} for $k:=m:=1$ and $y:=x$ together with $\vartheta_{2,\psi}(x,x) = \vartheta_{1,\psi}(x)$ and $\vartheta_{2,\varphi}(x,x) =
\vartheta_{1,\varphi}(x)$, imply that $\vartheta_{1,\psi}(x)\leq\vartheta_{1,\varphi}(x)$ for all $x\in X$.

We prove the inequality \eqref{psi_est_inequality_3} by contradiction.
Assume that, for some $t\in\Theta$ and $x,y\in X$ with $\vartheta_{1,\varphi}(x)<t<\vartheta_{1,\varphi}(y)$, we have
 \begin{align}\label{help_6}
    \psi(x,t) \varphi(y,t) > \psi(y,t) \varphi(x,t).
 \end{align}
In view of the inequalities $\vartheta_{1,\psi}(x)\leq\vartheta_{1,\varphi}(x)<t<\vartheta_{1,\varphi}(y)$, we get that $\psi(x,t)<0$, $\varphi(x,t)<0$ and $\varphi(y,t)>0$, whence the inequality
\eqref{help_6}
 yields that $\psi(y,t)>0$. Thus, we obtain
 \begin{align}\label{help_10}
  0<-\frac{\psi(x,t)}{\psi(y,t)}
  <-\frac{\varphi(x,t)}{ \varphi(y,t)}.
 \end{align}
Therefore, there exist $k,m\in\NN$ such that
$$
  0<-\frac{\psi(x,t)}{\psi(y,t)}
  <\frac{m}{k}<-\frac{\varphi(x,t)}{\varphi(y,t)}.
$$
Rearranging these inequalities, it follows that
$$
  k\varphi(x,t)+m\varphi(y,t)
  < 0 < k\psi(x,t)+m\psi(y,t).
$$
Since $\varphi$ possesses the property $[Z]$, by Lemma \ref{Lem_psi_est_eq_2}, we get the inequalities
$$
  \vartheta_{k+m,\varphi}(\underbrace{x,\ldots,x}_{k}, \underbrace{y,\ldots,y}_{m})
  < t \leq \vartheta_{k+m,\psi}(\underbrace{x,\ldots,x}_{k}, \underbrace{y,\ldots,y}_{m}).
$$
which contradicts \eqref{psi_est_inequality_2.5}.

(iii)$\Rightarrow$(iv). Assume that (iii) holds. If $\Theta_\varphi$ is empty, then there is nothing to prove. Thus, we may assume that $\Theta_\varphi\neq\emptyset$. Rearranging the inequality
\eqref{psi_est_inequality_3}, for all $t\in\Theta_\varphi$ and for all $x,y\in X$ with $\vartheta_{1,\varphi}(x)<t<\vartheta_{1,\varphi}(y)$, we get
 \begin{align}\label{help_7}
  \frac{\psi(y,t)}{\varphi(y,t)}
  \leq\frac{\psi(x,t)}{\varphi(x,t)},
\end{align}
where we used that $\varphi(x,t)<0$ and $\varphi(y,t)>0$.
Now we define the function $p:\Theta_\varphi\to\RR$ by
$$
  p(t):=\inf \bigg\{\frac{\psi(x,t)}{\varphi(x,t)}\,\bigg|\,x\in X,\,\vartheta_{1,\varphi}(x)<t\bigg\},
  \qquad t\in\Theta_\varphi.
$$
Due to the inclusion $t\in\Theta_\varphi$, the function $p$ is finite valued, and $p(t)\geq 0$ for all $t\in \Theta_\varphi$.
Indeed, if $t\in\Theta_\varphi$, then there exists $x\in X$ such that $\vartheta_{1,\varphi}(x)<t$ yielding that
 $\vartheta_{1,\psi}(x)\leq\vartheta_{1,\varphi}(x)<t$.
Therefore $\psi(x,t)<0$ and $\varphi(x,t)<0$,
 and hence $p(t)$ is defined as the infimum of certain positive real numbers.
Further, by the definition of infimum and \eqref{help_7}, for all $t\in\Theta_\varphi$ and for all $x,y\in X$ with $\vartheta_{1,\varphi}(x)<t<\vartheta_{1,\varphi}(y)$, we obtain that
\begin{equation}\label{help_6.5}
  \frac{\psi(y,t)}{\varphi(y,t)}
  \leq p(t)\leq\frac{\psi(x,t)}{\varphi(x,t)}.
\end{equation}
Let $z\in X$ be fixed arbitrarily.
The first inequality in \eqref{help_6.5} with $y:=z$ implies that \eqref{psi_est_inequality_5} holds for all $t\in\Theta_\varphi$ with $t<\vartheta_{1,\varphi}(z)$.
Similarly, the second inequality in \eqref{help_6.5} with $x:=z$ yields that \eqref{psi_est_inequality_5} is also valid for all $t\in\Theta_\varphi$ with $\vartheta_{1,\varphi}(z)<t$.
Finally, if $t=\vartheta_{1,\varphi}(z)$, then by the property $[Z]$ of $\varphi$, we have that $\varphi(z,t)=0$.
Furthermore, by the assumption in part (iii), the inequality $\vartheta_{1,\psi}(z)\leq \vartheta_{1,\varphi}(z)=t$ holds.
Using that $\psi$ is a $Z_1$-function, it implies that $\psi(z,t)\leq 0$.
This proves that \eqref{psi_est_inequality_5} holds for $t=\vartheta_{1,\varphi}(z)$ as well.

(iv)$\Rightarrow$(i). Assume that (iv) is valid.
To prove (i), let $n\in\NN$ and $x_1,\dots,x_n\in X$.

In the proof of the inequality \eqref{psi_est_inequality_2}, we distinguish two cases.
First, consider the case when $\vartheta_{1,\varphi}(x_1)=\dots=\vartheta_{1,\varphi}(x_n)=:t_0$.
Then $\varphi(x_1,t_0)=\dots= \varphi(x_n,t_0)=0$, since $\varphi$ possesses the property $[Z]$.
Therefore,
$$
  \sum_{i=1}^n\varphi(x_i,t_0)=0,
$$
which (using that $\varphi$ is also a $T$-function or Lemma \ref{Lem_psi_est_eq_2}) implies that
$$
  \vartheta_{n,\varphi}(x_1,\dots,x_n)=t_0.
$$
On the other hand, the inequality $\vartheta_{1,\psi}\leq  \vartheta_{1,\varphi}$ yields that
$$
  \vartheta_{1,\psi}(x_1)\leq \vartheta_{1,\varphi}(x_1)=t_0,\qquad\dots,\qquad \vartheta_{1,\psi}(x_n)\leq \vartheta_{1,\varphi}(x_n)=t_0,
$$
and hence, using that $\psi$ is a $Z_1$-function, we have
$$
  \psi(x_1,t_0)\leq0,\qquad\dots,\qquad
  \psi(x_n,t_0)\leq0.
$$
Therefore
$$
  \sum_{i=1}^n\psi(x_i,t_0)\leq0,
$$
which (using that $\psi$ is a $T$-function) implies that
$$
  \vartheta_{n,\psi}(x_1,\dots,x_n)\leq t_0
  =\vartheta_{n,\varphi}(x_1,\dots,x_n).
$$
Thus \eqref{psi_est_inequality_2} is proved, when $\vartheta_{1,\varphi}(x_1)=\dots=\vartheta_{1,\varphi}(x_n)$.

Now consider the case when
$$
  t':=\min(\vartheta_{1,\varphi}(x_1),\dots,\vartheta_{1,\varphi}(x_n))
  <\max(\vartheta_{1,\varphi}(x_1),\dots,\vartheta_{1,\varphi}(x_n))=:t''.
$$
Let $t_0:=\vartheta_{n,\varphi}(x_1,\dots,x_n)$.
Then there exist $i,j\in\{1,\dots,n\}$ such that $t'=\vartheta_{1,\varphi}(x_i)$ and $t''=\vartheta_{1,\varphi}(x_j)$.
According to Proposition \ref{Pro_psi_becsles_mean_prop}  (which can be applied, since $\varphi$ has the property $[Z]$),
we have that $t'<t_0<t''$, which shows that $t_0\in\Theta_\varphi$.
Hence we can apply the inequality \eqref{psi_est_inequality_5} with $z:=x_i$, $i\in\{1,\dots,n\}$ and $t:=t_0$. Then, adding up the inequalities so obtained side by side, we arrive at
 $$
  \sum_{i=1}^n \psi(x_i,t_0)
  \leq p(t_0)\sum_{i=1}^n \varphi(x_i,t_0)=0,
 $$
where the equality follows from the property $[Z]$ of $\varphi$ and the definition if $t_0$.
This, according to the property $[T_n]$ of $\psi$, implies that
$$
  \vartheta_{n,\psi}(x_1,\dots,x_n)\leq t_0=\vartheta_{n,\varphi}(x_1,\dots,x_n),
$$
and proves (i) in the considered second case as well.
\end{proof}

In the next remark, we highlight the role of the assumption
$\vartheta_{1,\psi}\leq\vartheta_{1,\varphi}$ in part (iii) of Theorem \ref{Lem_psi_est_eq_5}.

\begin{Rem}\label{Rem_Thm_main}
\rm
(i).
Let $\psi,\varphi\in\Psi[Z_1](X,\Theta)$, and suppose that $\vartheta_{1,\psi}(z)\leq \vartheta_{1,\varphi}(z)$ for all $z\in X$.
Let $x,y\in X$ with $\vartheta_{1,\varphi}(x)<\vartheta_{1,\varphi}(y)$.
Then the inequality \eqref{psi_est_inequality_3} is automatically valid for $t=\vartheta_{1,\varphi}(x)$ and $t=\vartheta_{1,\varphi}(y)$.
Indeed, if $t=\vartheta_{1,\varphi}(y)$, then we have $\vartheta_{1,\psi}(y)\leq t$ and
 \[
    \vartheta_{1,\psi}(x)\leq \vartheta_{1,\varphi}(x) <  \vartheta_{1,\varphi}(y) = t.
 \]
Using that $\psi$ and $\varphi$ have the property $[Z_1]$, it implies that $\psi(x,t)<0$, $\varphi(y,t)=0$, $\psi(y,t)\leq 0$ and $\varphi(x,t)<0$,
 and consequently,
 \[
  \psi(x,t)\varphi(y,t) = 0 \leq \psi(y,t)\varphi(x,t),
 \]
 as desired.
Similarly, if $t=\vartheta_{1,\varphi}(x)$, then we have
 \[
    \vartheta_{1,\psi}(x)\leq t = \vartheta_{1,\varphi}(x) < \vartheta_{1,\varphi}(y).
 \]
Using that $\psi$ and $\varphi$ have the property $[Z_1]$, it implies that $\psi(x,t)\leq 0$, $\varphi(y,t)>0$ and $\varphi(x,t)=0$,
 and consequently,
 \[
  \psi(x,t)\varphi(y,t) \leq 0 = \psi(y,t)\varphi(x,t),
 \]
 as desired.

(ii).
Let $\psi,\varphi\in\Psi[Z_1](X,\Theta)$.
If $\vartheta_{1,\varphi}(X)\subseteq \Theta_\varphi$,
 and there exists a nonnegative function $p:\Theta_\varphi\to\RR_+$ such that
 the inequality \eqref{psi_est_inequality_5} holds, then
the inequality $\vartheta_{1,\psi}(x)\leq \vartheta_{1,\varphi}(x)$ holds for all $x\in X$.
Indeed, using \eqref{psi_est_inequality_5} with $z:=x$ and $t:=\vartheta_{1,\varphi}(x)$ (where $x\in X$)
 and that $\varphi\in\Psi[Z_1](X,\Theta)$, we get that
 \[
    \psi(x,\vartheta_{1,\varphi}(x)) \leq p(\vartheta_{1,\varphi}(x))\varphi(x,\vartheta_{1,\varphi}(x)) = p(\vartheta_{1,\varphi}(x)) \cdot 0 = 0,
      \qquad x\in X.
 \]
Since $\psi\in\Psi[Z_1](X,\Theta)$, this inequality implies that $\vartheta_{1,\psi}(x)\leq \vartheta_{1,\varphi}(x)$ for all $x\in X$, as desired.
A similar argument shows that if $\psi,\varphi\in\Psi[Z_1](X,\Theta)$ and $\psi(z,t)\leq\varphi(z,t)$, $z\in X$, $t\in\Theta$, then $\vartheta_{1,\psi}(x)\leq \vartheta_{1,\varphi}(x)$, $x\in X$.

(iii). Let us suppose that $\psi\in\Psi[T,Z_1](X,\Theta)$ and $\varphi\in\Psi[Z](X,\Theta)$.
If the inequality \eqref{psi_est_inequality_3} holds for all $t\in\Theta$ and for all $x,y\in X$ with $\vartheta_{1,\varphi}(x)\leq t\leq \vartheta_{1,\varphi}(y)$, then, in general,
\eqref{psi_est_inequality_2} does not hold, not even for $n=1$.
We give a counterexample.
Let $X:=\{x_1,x_2\}$, $\Theta:=\RR$, and
 \begin{align*}
  &\psi(x_j,t):=-jt,\qquad t\in\RR,\qquad j\in\{1,2\},\\
  &\varphi(x_j,t):=-j(t+1),\qquad t\in\RR,\qquad j\in\{1,2\}.
 \end{align*}
Then $\psi$ and $\varphi$ are $Z$-functions with $\vartheta_{n,\psi}(\pmb{x})=0$
  and $\vartheta_{n,\varphi}(\pmb{x})=-1$ for all $n\in\NN$ and $\pmb{x}\in X^n$.
In particular, $\Theta_\varphi = \emptyset$.
Moreover, if $t\in\Theta$ and $x,y\in X$ are such that $\vartheta_{1,\varphi}(x)\leq t \leq \vartheta_{1,\varphi}(y)$, then $t=-1$, and, since $\varphi(x,-1) = \varphi(y,-1) = 0$, $x,y\in X$, we get
that both sides of the inequality \eqref{psi_est_inequality_3} are $0$ for $t=-1$.
Consequently, the inequality \eqref{psi_est_inequality_3} holds for all $t\in\Theta$ and for all $x,y\in X$ with $\vartheta_{1,\varphi}(x)\leq t\leq \vartheta_{1,\varphi}(y)$.
However, $\vartheta_{1,\psi}(x)=0>\vartheta_{1,\varphi}(x)=-1$, $x\in X$, and hence
 \eqref{psi_est_inequality_2} does not hold for $n=1$.
This example also points out the fact that, in part (iii) of Theorem \ref{Lem_psi_est_eq_5}, the assumption that the inequality $\vartheta_{1,\psi}(x)\leq \vartheta_{1,\varphi}(x)$ should hold for
all $x\in X$ is not a redundant one.
 \proofend
\end{Rem}

The following result is parallel to Theorem \ref{Lem_psi_est_eq_5}.

\begin{Thm}\label{Lem_phi_est_eq_5}
Let $\psi\in\Psi[Z](X,\Theta)$ and $\varphi\in\Psi[T,Z_1](X,\Theta)$.
Then both of the assertions (i) and (ii) of Theorem~\ref{Lem_psi_est_eq_5} and following statements are equivalent to each other.
\vspace{-3mm}
\begin{enumerate}
 \item[(iii)] For all $x\in X$, we have $\vartheta_{1,\psi}(x)\leq \vartheta_{1,\varphi}(x)$, and the inequality \eqref{psi_est_inequality_3}
 is valid for all $t\in \Theta$ and for all $x,y\in X$ with $\vartheta_{1,\psi}(x)<t<\vartheta_{1,\psi}(y)$.
 \item[(iv)] For all $x\in X$, we have $\vartheta_{1,\psi}(x)\leq \vartheta_{1,\varphi}(x)$, and there exists a nonnegative function $q:\Theta_\psi\to\RR_+$ such that
 \begin{align}\label{phi_est_inequality_5}
    q(t)\psi(z,t)\leq \varphi(z,t), \qquad z\in X,\, t\in\Theta_\psi.
 \end{align}
\end{enumerate}
\end{Thm}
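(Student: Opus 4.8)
The plan is to establish the cycle (i)$\Rightarrow$(ii)$\Rightarrow$(iii)$\Rightarrow$(iv)$\Rightarrow$(i), mirroring the proof of Theorem~\ref{Lem_psi_est_eq_5} but with the roles of $\psi$ and $\varphi$ interchanged: here it is $\psi$ that carries the property $[Z]$, so the zero-sum identities $\sum_i\psi(x_i,t_0)=0$ and the interval $\Theta_\psi$ (the interior of $\conv(\vartheta_{1,\psi}(X))$) now govern the argument, while $\varphi$, carrying $[T,Z_1]$, plays the auxiliary role. The implication (i)$\Rightarrow$(ii) is immediate by taking $n=k+m$, and from (ii) with $k=m=1$, $y=x$, together with $\vartheta_{2,\psi}(x,x)=\vartheta_{1,\psi}(x)$ and $\vartheta_{2,\varphi}(x,x)=\vartheta_{1,\varphi}(x)$, one reads off the pointwise inequality $\vartheta_{1,\psi}\leq\vartheta_{1,\varphi}$ that appears in both (iii) and (iv).

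For (ii)$\Rightarrow$(iii) I would argue by contradiction, assuming $\psi(x,t)\varphi(y,t)>\psi(y,t)\varphi(x,t)$ for some $x,y$ and $t$ with $\vartheta_{1,\psi}(x)<t<\vartheta_{1,\psi}(y)$. The interval condition gives $\psi(x,t)<0$ and $\psi(y,t)>0$ directly, and from $t<\vartheta_{1,\psi}(y)\leq\vartheta_{1,\varphi}(y)$ with $[Z_1]$ for $\varphi$ one gets $\varphi(y,t)>0$. The one genuinely new step, and the place I expect the main friction, is that the sign of $\varphi(x,t)$ is \emph{not} handed to us by the interval condition (unlike in Theorem~\ref{Lem_psi_est_eq_5}, where the interval was cut out by $\varphi$); instead it must be extracted from the contradiction hypothesis itself, since the left-hand side $\psi(x,t)\varphi(y,t)$ is negative while $\psi(y,t)>0$, forcing $\varphi(x,t)<0$. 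With all four signs in hand one rearranges to $0<-\psi(x,t)/\psi(y,t)<-\varphi(x,t)/\varphi(y,t)$, interpolates a rational $m/k$, and obtains $k\psi(x,t)+m\psi(y,t)>0$ and $k\varphi(x,t)+m\varphi(y,t)<0$. Lemma~\ref{Lem_psi_est_eq_2} applied to $\psi\in\Psi[Z]$, together with the $T_{k+m}$-property of $\varphi$, then yields $\vartheta_{k+m,\varphi}(\dots)<t<\vartheta_{k+m,\psi}(\dots)$, contradicting \eqref{psi_est_inequality_2.5} in (ii).

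For (iii)$\Rightarrow$(iv) the infimum construction is dualized relative to the original: dividing \eqref{psi_est_inequality_3} by $\psi(x,t)\psi(y,t)<0$ reverses the inequality into $\varphi(x,t)/\psi(x,t)\leq\varphi(y,t)/\psi(y,t)$ whenever $\vartheta_{1,\psi}(x)<t<\vartheta_{1,\psi}(y)$, so I would set $q(t):=\inf\{\varphi(y,t)/\psi(y,t)\mid y\in X,\ t<\vartheta_{1,\psi}(y)\}$ on $\Theta_\psi$. Nonnegativity follows because $t<\vartheta_{1,\psi}(y)\leq\vartheta_{1,\varphi}(y)$ makes both $\psi(y,t)$ and $\varphi(y,t)$ positive, and the three cases $t<\vartheta_{1,\psi}(z)$, $t>\vartheta_{1,\psi}(z)$, $t=\vartheta_{1,\psi}(z)$ verify \eqref{phi_est_inequality_5}, the last using $\psi(z,t)=0$ and $\varphi(z,t)\geq0$. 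Finally (iv)$\Rightarrow$(i) splits on whether $\vartheta_{1,\psi}(x_1),\dots,\vartheta_{1,\psi}(x_n)$ are all equal: in the degenerate case $\sum_i\psi(x_i,t_0)=0$ pins $\vartheta_{n,\psi}=t_0$ while $\varphi(x_i,t_0)\geq0$ forces $\vartheta_{n,\varphi}\geq t_0$; in the non-degenerate case Proposition~\ref{Pro_psi_becsles_mean_prop} places $t_0:=\vartheta_{n,\psi}(\bx)$ strictly inside $\Theta_\psi$, so summing \eqref{phi_est_inequality_5} at $z=x_i$, $t=t_0$ and using $\sum_i\psi(x_i,t_0)=0$ gives $\sum_i\varphi(x_i,t_0)\geq0$, whence $\vartheta_{n,\varphi}\geq t_0$.

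As a cleaner alternative that avoids re-deriving each implication, I would note that the substitution $t\mapsto -t$ and $\psi\mapsto\widetilde\psi(\cdot,s):=-\psi(\cdot,-s)$ carries $\Psi[Z]$ and $\Psi[T,Z_1]$ into the corresponding classes over the interval $-\Theta$, sends $\vartheta_{n,\psi}$ to $-\vartheta_{n,\widetilde\psi}$, and thus turns $\vartheta_{n,\psi}\leq\vartheta_{n,\varphi}$ into $\vartheta_{n,\widetilde\varphi}\leq\vartheta_{n,\widetilde\psi}$. Applying Theorem~\ref{Lem_psi_est_eq_5} to the pair $(\widetilde\varphi,\widetilde\psi)$ and tracking the order reversal, which merely swaps $x$ and $y$ in the interval condition, reproduces the present (iii) and (iv) verbatim; the only care required there is the bookkeeping of this reversal.
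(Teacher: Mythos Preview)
Your primary argument is correct and follows the paper's own proof essentially step for step: the same cycle (i)$\Rightarrow$(ii)$\Rightarrow$(iii)$\Rightarrow$(iv)$\Rightarrow$(i), the same contradiction in (ii)$\Rightarrow$(iii) via a rational interpolation $m/k$ and Lemma~\ref{Lem_psi_est_eq_2} applied to $\psi\in\Psi[Z]$, the same infimum construction $q(t)=\inf\{\varphi(y,t)/\psi(y,t)\mid t<\vartheta_{1,\psi}(y)\}$ in (iii)$\Rightarrow$(iv), and the same two-case split in (iv)$\Rightarrow$(i) using Proposition~\ref{Pro_psi_becsles_mean_prop}. (A tiny quibble: from $\sum\varphi<0$ and $[T]$ alone you only get $\vartheta_{k+m,\varphi}\leq t$, not strict, as in the paper; the contradiction survives unchanged.) Your remark that $\varphi(x,t)<0$ must be read off from the contradiction hypothesis rather than from the interval condition is a correct observation, though the paper simply divides by $\psi(y,t)\varphi(y,t)>0$ and does not isolate that sign explicitly.

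Your alternative via the reflection $\widetilde\psi(x,s):=-\psi(x,-s)$ on $-\Theta$ is a genuine shortcut that the paper does not use: it correctly transports $[Z]$ and $[T,Z_1]$, sends $\vartheta_{n,\psi}\mapsto-\vartheta_{n,\widetilde\psi}$, and after swapping $x\leftrightarrow y$ in the interval condition reproduces (iii) and (iv) here directly from Theorem~\ref{Lem_psi_est_eq_5}. This buys you the result without re-running the four implications, at the cost of a bookkeeping paragraph; the paper instead opts for the explicit parallel proof.
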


The proof of Theorem \ref{Lem_phi_est_eq_5} is completely analogous to that of Theorem~\ref{Lem_psi_est_eq_5}, for the sake of completeness, we verify it in details.

\begin{proof}
(i) of Theorem \ref{Lem_psi_est_eq_5} $\Rightarrow$ (ii) of Theorem \ref{Lem_psi_est_eq_5}.
It was already proved.

(ii) of Theorem \ref{Lem_psi_est_eq_5} $\Rightarrow$  (iii).
Assume that (ii) of Theorem \ref{Lem_psi_est_eq_5} holds.
Exactly as in the proof of Theorem~\ref{Lem_psi_est_eq_5}, we have $\vartheta_{1,\psi}(x)\leq\vartheta_{1,\varphi}(x)$ for all $x\in X$.
We prove the inequality \eqref{psi_est_inequality_3} by contradiction.
Assume that for some $t\in\Theta$ and $x,y\in X$ with $\vartheta_{1,\psi}(x)<t<\vartheta_{1,\psi}(y)$, we have
 \begin{align*}
    \psi(x,t) \varphi(y,t) > \psi(y,t) \varphi(x,t).
 \end{align*}
In view of the inequalities $\vartheta_{1,\psi}(x)<t<\vartheta_{1,\psi}(y)\leq \vartheta_{1,\varphi}(y)$,
 we get that $\psi(x,t)<0$, $\psi(y,t)>0$ and $\varphi(y,t)>0$.
Thus, we obtain \eqref{help_10}, and, as in the corresponding part of the proof of Theorem~\ref{Lem_psi_est_eq_5},
 we have that there exist $k,m\in\NN$ such that
 \[
  k\varphi(x,t) + m \varphi(y,t) < 0 < k\psi(x,t) + m \psi(y,t).
 \]
Since $\psi$ possesses the property $[Z]$, in view of Lemma \ref{Lem_psi_est_eq_2}, we get the inequalities
 $$
  \vartheta_{k+m,\varphi}(\underbrace{x,\ldots,x}_{k}, \underbrace{y,\ldots,y}_{m})
  \leq  t < \vartheta_{k+m,\psi}(\underbrace{x,\ldots,x}_{k}, \underbrace{y,\ldots,y}_{m}),
 $$
 which contradicts \eqref{psi_est_inequality_2.5}.

(iii)$\Rightarrow$(iv).
Assume that (iii) holds. If $\Theta_\psi$ is empty, then there is nothing to prove.
Thus, we may assume that $\Theta_\psi\neq\emptyset$.
Rearranging the inequality \eqref{psi_est_inequality_3},
 for all $t\in\Theta_\psi$ and for all $x,y\in X$ with $\vartheta_{1,\psi}(x)<t<\vartheta_{1,\psi}(y)$, we get
 \begin{align}\label{help_7_uj}
  \frac{\varphi(x,t)}{\psi(x,t)}
  \leq\frac{\varphi(y,t)}{\psi(y,t)},
\end{align}
 where we used that $\psi(x,t)<0$ and $\psi(y,t)>0$.
Now we define the function $q:\Theta_\psi\to\RR$ by
$$
  q(t):=\inf \bigg\{\frac{\varphi(y,t)}{\psi(y,t)}\,\bigg|\,y\in X,\,t<\vartheta_{1,\psi}(y)\bigg\},
  \qquad t\in\Theta_\psi.
$$
Due to the inclusion $t\in\Theta_\psi$, the function $q$ is finite valued, and $q(t)\geq 0$ for all $t\in \Theta_\psi$.
Indeed, if $t\in\Theta_\psi$, then there exists $y\in X$ such that $t<\vartheta_{1,\psi}(y)$ yielding that
 $t<\vartheta_{1,\psi}(y)\leq\vartheta_{1,\varphi}(y)$, therefore $\psi(y,t)>0$ and $\varphi(y,t)>0$,
 and hence $q(t)$ is defined as the infimum of certain positive real numbers.
Further, by the definition of infimum and \eqref{help_7_uj}, for all $t\in\Theta_\psi$ and for all $x,y\in X$
 with $\vartheta_{1,\psi}(x)<t<\vartheta_{1,\psi}(y)$, we obtain that
\begin{equation}\label{help_6.5_uj}
  \frac{\varphi(x,t)}{\psi(x,t)}
  \leq q(t)\leq\frac{\varphi(y,t)}{\psi(y,t)}.
\end{equation}
Let $z\in X$ be fixed arbitrarily.
The first inequality in \eqref{help_6.5_uj} with $x:=z$ implies that \eqref{phi_est_inequality_5} holds for all $t\in\Theta_\psi$ with $\vartheta_{1,\psi}(z)<t$.
Similarly, the second inequality in \eqref{help_6.5_uj} with $y:=z$ yields that \eqref{phi_est_inequality_5} is also valid
 for all $t\in\Theta_\psi$ with $t<\vartheta_{1,\psi}(z)$.
Finally, if $t=\vartheta_{1,\psi}(z)$, then by the property $[Z]$ of $\psi$, we have that $\psi(z,t)=0$.
Furthermore, by the assumption in part (iii), the inequality $t=\vartheta_{1,\psi}(z)\leq \vartheta_{1,\varphi}(z)$ holds.
Using that $\varphi$ is a $Z_1$-function, it implies that $\varphi(z,t)\geq 0$.
This proves that \eqref{phi_est_inequality_5} holds for $t=\vartheta_{1,\psi}(z)$ as well.

(iv) $\Rightarrow$ (i) of Theorem \ref{Lem_psi_est_eq_5}.
Assume that (iv) is valid.
To prove (i) of Theorem \ref{Lem_psi_est_eq_5}, let $n\in\NN$ and $x_1,\dots,x_n\in X$.

In the proof of the inequality \eqref{psi_est_inequality_2}, we distinguish two cases.
First, consider the case when $\vartheta_{1,\psi}(x_1)=\dots=\vartheta_{1,\psi}(x_n)=:t_0$.
Then $\psi(x_1,t_0)=\dots= \psi(x_n,t_0)=0$, since $\psi$ possesses the property $[Z]$.
Therefore,
$$
  \sum_{i=1}^n\psi(x_i,t_0)=0,
$$
 which (using that $\psi$ is also a $T$-function or Lemma \ref{Lem_psi_est_eq_2}) implies that
$$
  \vartheta_{n,\psi}(x_1,\dots,x_n)=t_0.
$$
On the other hand, the inequality $\vartheta_{1,\psi}\leq  \vartheta_{1,\varphi}$ yields that
$$
  t_0=\vartheta_{1,\psi}(x_1)\leq \vartheta_{1,\varphi}(x_1),\qquad\dots,\qquad t_0=\vartheta_{1,\psi}(x_n)\leq \vartheta_{1,\varphi}(x_n),
$$
and hence, using that $\varphi$ is a $Z_1$-function, we have
$$
  \varphi(x_1,t_0)\geq0,\qquad\dots,\qquad
  \varphi(x_n,t_0)\geq0.
$$
Therefore
$$
  \sum_{i=1}^n\varphi(x_i,t_0)\geq0,
$$
which (using that $\varphi$ is a $T$-function) implies that
$$
  \vartheta_{n,\psi}(x_1,\dots,x_n)= t_0
   \leq \vartheta_{n,\varphi}(x_1,\dots,x_n).
$$
Thus \eqref{psi_est_inequality_2} is proved when $\vartheta_{1,\psi}(x_1)=\dots=\vartheta_{1,\psi}(x_n)$ holds.

Now consider the case, when
$$
  t':=\min(\vartheta_{1,\psi}(x_1),\dots,\vartheta_{1,\psi}(x_n))
  <\max(\vartheta_{1,\psi}(x_1),\dots,\vartheta_{1,\psi}(x_n))=:t''.
$$
Let $t_0:=\vartheta_{n,\psi}(x_1,\dots,x_n)$.
Then there exist $i,j\in\{1,\dots,n\}$ such that $t'=\vartheta_{1,\psi}(x_i)$ and $t''=\vartheta_{1,\psi}(x_j)$.
According to Proposition \ref{Pro_psi_becsles_mean_prop} (which can be applied, since $\psi$ has the property $[Z]$), we have that $t'<t_0<t''$, which shows that $t_0\in\Theta_\psi$.
Hence we can apply the inequality \eqref{phi_est_inequality_5} with $z:=x_i$, $i\in\{1,\dots,n\}$ and $t:=t_0$. Then, adding up the inequalities so obtained side by side, we arrive at
 $$
  0=q(t_0)\sum_{i=1}^n \psi(x_i,t_0)
    \leq \sum_{i=1}^n \varphi(x_i,t_0),
 $$
where the equality follows from the property $[Z]$ of $\psi$ and from the definition of $t_0$.
This, according to the property $[T_n]$ of $\varphi$, implies that
$$
  \vartheta_{n,\psi}(x_1,\dots,x_n) = t_0 \leq \vartheta_{n,\varphi}(x_1,\dots,x_n),
$$
and proves (i) in the considered second case as well.
\end{proof}

Note that if $\psi,\varphi\in\Psi[Z](X,\Theta)$, then the assertions (i), (ii), (iii) and (iv) of Theorem~\ref{Lem_psi_est_eq_5} and the assertions (iii) and (iv) of Theorem \ref{Lem_phi_est_eq_5}
are equivalent to each other.

In the next result, under some additional regularity assumptions on $\psi$ and $\varphi$, we derive another
 set of conditions that is equivalent to \eqref{psi_est_inequality_2}.

\begin{Thm}\label{Thm_inequality_diff}
Let $\psi,\varphi\in\Psi[C,Z](X,\Theta)$.
Assume that $\vartheta_{1,\psi}=\vartheta_{1,\varphi}=:\vartheta_1$ on $X$, $\vartheta_1(X)=\Theta$, and, for all $x\in X$, the maps
$$
  \Theta\ni t\mapsto\psi(x,t) \qquad\mbox{and}\qquad
  \Theta\ni t\mapsto\varphi(x,t)
$$
are differentiable at $\vartheta_1(x)$ with a non-vanishing derivative.
Then any of the equivalent assertions (i), (ii), (iii) and (iv) of Theorem~\ref{Lem_psi_est_eq_5} is equivalent to the following one:
\vspace{-3mm}
\begin{enumerate}
 \item[(v)] For all $x,y\in X$, we have
 \begin{align}\label{phi_est_inequality_6}
    -\frac{\psi(y,\vartheta_1(x))}{\partial_2\psi(x,\vartheta_1(x))}
    \leq -\frac{\varphi(y,\vartheta_1(x))}{\partial_2\varphi(x,\vartheta_1(x))}.
 \end{align}
\end{enumerate}
\end{Thm}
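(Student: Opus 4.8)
The plan is to enlarge the equivalence class $\{$(i),(ii),(iii),(iv)$\}$ of Theorem~\ref{Lem_psi_est_eq_5} by the new assertion (v), exploiting that, since $\psi,\varphi\in\Psi[C,Z]\subseteq\Psi[T,Z_1]\cap\Psi[Z]$, that theorem (and the note following Theorem~\ref{Lem_phi_est_eq_5}) already applies. Concretely, I would prove the two implications (iii)$\Rightarrow$(v) and (v)$\Rightarrow$(iv); together with the known equivalence (iii)$\Leftrightarrow$(iv) this closes the loop. Two preliminary observations feed into both steps. First, since $\vartheta_{1,\psi}=\vartheta_{1,\varphi}=\vartheta_1$, the requirement $\vartheta_{1,\psi}\leq\vartheta_{1,\varphi}$ present in (iii) and (iv) is automatic (with equality), and $\Theta_\varphi$ equals the interior of $\conv(\vartheta_1(X))=\conv(\Theta)=\Theta$, hence $\Theta_\varphi=\Theta$. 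Second, the property $[Z_1]$ forces $\psi(x,\cdot)$ to pass from positive to negative values at $\vartheta_1(x)$, so its derivative there is $\leq 0$, and the non-vanishing hypothesis upgrades this to $\partial_2\psi(x,\vartheta_1(x))<0$; likewise $\partial_2\varphi(x,\vartheta_1(x))<0$. These strict sign facts are what let me divide by the derivatives without mishandling inequality directions.

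For (iii)$\Rightarrow$(v) I fix $x,y\in X$, set $s:=\vartheta_1(x)$ and consider $g(t):=\psi(y,t)\varphi(x,t)-\psi(x,t)\varphi(y,t)$, so that \eqref{psi_est_inequality_3} reads $g(t)\geq 0$. Using $\psi(x,s)=\varphi(x,s)=0$ one has $g(s)=0$. When $\vartheta_1(x)<\vartheta_1(y)$, (iii) gives $g(t)\geq 0$ for $t\in(s,\vartheta_1(y))$, so the right difference quotient $g(t)/(t-s)$ is $\geq 0$; writing
\begin{equation*}
  \frac{g(t)}{t-s}=\psi(y,t)\,\frac{\varphi(x,t)-\varphi(x,s)}{t-s}-\frac{\psi(x,t)-\psi(x,s)}{t-s}\,\varphi(y,t)
\end{equation*}
and letting $t\to s^+$ (here only $\psi(x,\cdot),\varphi(x,\cdot)$ need be differentiable at $s=\vartheta_1(x)$, which is the hypothesis, while $\psi(y,\cdot),\varphi(y,\cdot)$ enter merely through continuity) I obtain $\psi(y,s)\partial_2\varphi(x,s)-\partial_2\psi(x,s)\varphi(y,s)\geq 0$. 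Dividing by $\partial_2\psi(x,s)\,\partial_2\varphi(x,s)>0$ and negating rearranges exactly into \eqref{phi_est_inequality_6}. When $\vartheta_1(x)>\vartheta_1(y)$, applying (iii) to the reversed pair gives $g(t)\leq 0$ on $(\vartheta_1(y),s)$, so the left difference quotient $g(t)/(t-s)$ is again $\geq 0$ and the same limit yields the same inequality; the case $\vartheta_1(x)=\vartheta_1(y)$ is trivial, since then both sides of \eqref{phi_est_inequality_6} vanish by the property $[Z]$.

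For (v)$\Rightarrow$(iv) I use the hypothesis $\vartheta_1(X)=\Theta$ decisively: given any $t\in\Theta=\Theta_\varphi$, I choose $w\in X$ with $\vartheta_1(w)=t$ and set $p(t):=\partial_2\psi(w,t)/\partial_2\varphi(w,t)$, which is positive by the sign facts above. Applying \eqref{phi_est_inequality_6} to the pairs $(w,z)$, $z\in X$, and clearing the (negative) denominators gives $\psi(z,t)\leq p(t)\varphi(z,t)$ for all $z\in X$, which is \eqref{psi_est_inequality_5}; together with the automatic inequality $\vartheta_{1,\psi}\leq\vartheta_{1,\varphi}$ this is precisely (iv). I expect the main obstacle to be the (iii)$\Rightarrow$(v) step: one must convert the pointwise inequality \eqref{psi_est_inequality_3} on an open interval into a one-sided derivative condition at its endpoint $\vartheta_1(x)$, being careful (a) to differentiate only the factors whose differentiability is assumed and to keep the remaining factors under continuity, and (b) to track signs so that the division by the product of the two negative derivatives lands on the correct side of \eqref{phi_est_inequality_6}.
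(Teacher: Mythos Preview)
Your proposal is correct and follows essentially the same approach as the paper: the paper likewise establishes the preliminary facts $\Theta_\varphi=\Theta$ and $\partial_2\psi(x,\vartheta_1(x)),\partial_2\varphi(x,\vartheta_1(x))<0$, then proves (iii)$\Rightarrow$(v) via the one-sided difference-quotient limit of \eqref{psi_est_inequality_3} at $\vartheta_1(x)$ and (v)$\Rightarrow$(iv) by choosing a preimage $w=\rho(t)\in\vartheta_1^{-1}(\{t\})$ and setting $p(t)=\partial_2\psi(w,t)/\partial_2\varphi(w,t)$. The paper actually presents two alternative arguments for each direction (a sequence-based proof of (v) from (i), and an axiom-of-choice-free proof of (iii) from (v)); your choices coincide with one of each pair.
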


\begin{proof}
First, we check that $\Theta_\psi=\Theta_\varphi=\Theta$.
If $t\in\Theta$, then, using that $\Theta$ is open, there exist $t_1,t_2\in\Theta$ such that $t_1<t<t_2$.
Since $\vartheta_1(X)=\Theta$, there exist $x_1,x_2\in X$
 such that $\vartheta_1(x_1)=t_1$ and $\vartheta_1(x_2)=t_2$, yielding that $t\in\Theta_\psi$ and $t\in\Theta_\varphi$,
 and hence $\Theta_\psi=\Theta_\varphi=\Theta$.

Next, we verify that $\partial_2\psi(z,\vartheta_1(z))<0$ and $\partial_2\varphi(z,\vartheta_1(z)) < 0$ for all $z\in X$.
Using that $\psi$ is a $Z$-function, for all $z\in X$, we have $\psi(z,\vartheta_1(z))=0$, and hence
 \begin{align}\label{help_8}
   \partial_2\psi(z,\vartheta_1(z))
     = \lim_{t\to\vartheta_1(z)} \frac{\psi(x,t) - \psi(z,\vartheta_1(z))}{t-\vartheta_1(z)}
     = \lim_{t\to\vartheta_1(z)} \frac{\psi(x,t)}{t-\vartheta_1(z)}
     \leq 0.
 \end{align}
Indeed, if $t\in\Theta$ and $t<\vartheta_1(z)$, then $\psi(x,t)>0$ and $t-\vartheta_1(z)<0$;
 and if $t\in\Theta$ and $t>\vartheta_1(z)$, then $\psi(x,t)<0$ and $t-\vartheta_1(z)>0$.
\ Using the assumption $\partial_2\psi(z,\vartheta_1(z))\ne 0$, $z\in X$,
 \eqref{help_8} yields $\partial_2\psi(z,\vartheta_1(z))<0$, $z\in X$, as desired.
In the same way, we can get $\partial_2\varphi(z,\vartheta_1(z))<0$, $z\in X$.

Assume that any of the equivalent assertions (i)-(iv) of Theorem~\ref{Lem_psi_est_eq_5} holds.
We give two alternative proofs for \eqref{phi_est_inequality_6}.

{\sl First proof for \eqref{phi_est_inequality_6}:}
Let $x,y\in X$ be arbitrary.
We may suppose that $\vartheta_{1}(x) \ne \vartheta_{1}(y)$, since otherwise, using that both $\psi$ and $\varphi$ have the property $[Z]$, we get
 \[
   \psi(y,\vartheta_1(x)) = \varphi(y,\vartheta_1(x)) = 0,
 \]
and consequently, the inequality \eqref{phi_est_inequality_6} holds trivially.

For each $n\in\NN$, let $t_n:=\vartheta_{n+1,\psi}(x,\dots,x,y)$ and $\widetilde t_n:=\vartheta_{n+1,\varphi}(x,\dots,x,y)$,
 where $x$ is repeated $n$-times both for $t_n$ and $\widetilde t_n$.
Assertion (i) of Theorem~\ref{Lem_psi_est_eq_5} yields that, for each $n\in\NN$, we have
 \begin{align}\label{help_13}
 n(t_n-\vartheta_{1}(x)) \leq n(\widetilde t_n-\vartheta_{1}(x)).
 \end{align}
Then $t_n\to \vartheta_{1}(x)$ as $n\to\infty$ (see Lemma \ref{Lem_psi_est_cont})
 and, since $\psi$ has the property $[Z]$, we get
 \[
   n\psi(x,t_n)+\psi(y,t_n)=0, \qquad n\in\NN.
 \]
If $\psi(x,t_n) = 0$ for some $n\in\NN$, then, we also have $\psi(y,t_n)=0$, and hence $t_n = \vartheta_{1}(x) = \vartheta_{1}(y)$.
This leads us to a contradiction.
Consequently, $\psi(x,t_n)\ne 0$, and then $n=-\frac{\psi(y,t_n)}{\psi(x,t_n)}$ for any $n\in\NN$.
Therefore, using also that $\psi$ is a $C$-function, we get
\begin{align*}
  n(t_n-\vartheta_{1}(x))
  &=-\frac{\psi(y,t_n)}{\psi(x,t_n)}(t_n-\vartheta_{1}(x))\\
  &=-\psi(y,t_n)\frac{t_n-\vartheta_{1}(x)}{\psi(x,t_n)-\psi(x,\vartheta_{1}(x))}
  \to -\frac{\psi(y,\vartheta_{1}(x))}{\partial_2\psi(x,\vartheta_{1}(x))}
 \qquad \text{as $n\to\infty$.}
\end{align*}
Similarly, using that $\varphi\in\Psi[C,Z](X,\Theta)$, one can check that
 \begin{align*}
  n(\widetilde t_n-\vartheta_{1}(x))
  \to -\frac{\varphi(y,\vartheta_{1}(x))}{\partial_2\varphi(x,\vartheta_{1}(x))}
  \qquad \text{as \ $n\to\infty$.}
 \end{align*}
Now, upon taking the limit $n\to\infty$ in \eqref{help_13},
we can see that \eqref{phi_est_inequality_6} is valid.

{\sl Second proof for \eqref{phi_est_inequality_6}:}
Assertion (iii) of Theorem~\ref{Lem_psi_est_eq_5} states that \eqref{psi_est_inequality_3} holds for all $t\in\Theta$ and for all $x,y\in X$ with $\vartheta_1(x)<t<\vartheta_1(y)$.
Let $x,y\in X$ be arbitrarily fixed.
If $\vartheta_1(x) = \vartheta_1(y)$, then, using that $\psi$ and $\varphi$ are $Z$-functions, we have
 $\psi(y,\vartheta_1(x)) = \varphi(y,\vartheta_1(x))=0$,
 and hence \eqref{phi_est_inequality_6} readily holds.
If $\vartheta_1(x) < \vartheta_1(y)$, then, for all $t\in\Theta$ with $\vartheta_1(x)<t<\vartheta_1(y)$, the inequality \eqref{psi_est_inequality_3} is valid, and using that $\psi$ and $\varphi$ are
$Z$-functions, we get
 \[
  \frac{\psi(x,t)-\psi(x,\vartheta_1(x))}
    {t-\vartheta_1(x)} \varphi(y,t)
  \leq \psi(y,t) \frac{\varphi(x,t)-\varphi(x,\vartheta_1(x))}{t-\vartheta_1(x)},
   \qquad t\in(\vartheta_1(x), \vartheta_1(y)).
 \]
Upon taking the limit $t\downarrow\vartheta_1(x)$, using the differentiability assumption and that $\psi$ and $\varphi$ are $C$-functions, the previous inequality follows that
 \[
  \partial_2\psi(x,\vartheta_1(x))\cdot\varphi(y,\vartheta_1(x))
  \leq \psi(y,\vartheta_1(x))\cdot\partial_2\varphi(x,\vartheta_1(x)).
 \]
Using that $\partial_2\psi(z,\vartheta_1(z))<0$, $z\in X$, and $\partial_2\varphi(z,\vartheta_1(z)) < 0$,  $z\in X$,
the previous inequality implies \eqref{phi_est_inequality_6} in case of $\vartheta_1(x) < \vartheta_1(y)$.
If $\vartheta_1(x) > \vartheta_1(y)$, by a similar argument, we get that the inequality \eqref{psi_est_inequality_3} is valid
 for all $t\in\Theta$ with $\vartheta_1(y)<t< \vartheta_1(x)$.
Using that $\psi$ and $\varphi$ are $Z$-functions, we get
 \[
  \psi(y,t)
    \frac{\varphi(x,t) - \varphi(x,\vartheta_1(x))}{t-\vartheta_1(x)}
  \geq \frac{\psi(x,t)-\psi(x,\vartheta_1(x))}{t-\vartheta_1(x)}  \varphi(y,t),
   \qquad t\in(\vartheta_1(y), \vartheta_1(x)).
 \]
Upon taking the limit $t\uparrow\vartheta_1(x)$, using the differentiability assumption and that $\psi$ and $\varphi$ are $C$-functions, the previous inequality implies that
 \[
  \psi(y,\vartheta_1(x))\cdot\partial_2 \varphi(x,\vartheta_1(x))
  \geq \partial_2\psi(x,\vartheta_1(x))\cdot \varphi(x,\vartheta_1(x)).
 \]
Using that $\partial_2\psi(z,\vartheta_1(z))<0$ and $\partial_2\varphi(z,\vartheta_1(z)) < 0$,  $z\in X$,
the previous inequality implies \eqref{phi_est_inequality_6} also in the case of $\vartheta_1(x) > \vartheta_1(y)$.

Assume that \eqref{phi_est_inequality_6} holds.
We give two alternative proofs for one of the equivalent assertions (i)-(iv) of Theorem~\ref{Lem_psi_est_eq_5}.

\textit{First proof for the sufficiency (using the Axiom of Choice)}. We are going to show that our assertion (v) implies condition (iv) of Theorem~\ref{Lem_psi_est_eq_5}.

In view of the condition $\vartheta_1(X)=\Theta$ and applying the axiom of choice,
 there exists a right inverse of $\vartheta_1$, i.e., there exists a function $\rho:\Theta\to X$ such that $\vartheta_1(\rho(t))=t$
 is valid for all $t\in\Theta$. Then, substituting $x:=\rho(t)$ in the inequality \eqref{phi_est_inequality_6},
 for all $t\in\Theta$ and $y\in X$, we get that
 \begin{align}\label{phi_est_inequality_66}
    -\frac{\psi(y,t)}{\partial_2\psi(\rho(t),t)}
    \leq -\frac{\varphi(y,t)}{\partial_2\varphi(\rho(t),t)}.
 \end{align}
Using that $\partial_2\psi(\rho(t),t) = \partial_2\psi(\rho(t),\vartheta_1(\rho(t))) < 0$ and
   $\partial_2\varphi(\rho(t),t) = \partial_2\varphi(\rho(t),\vartheta_1(\rho(t))) < 0$ for all $t\in\Theta$,
 \eqref{phi_est_inequality_66}  implies that condition (iv) of Theorem~\ref{Lem_psi_est_eq_5} holds
 with the positive function $p:\Theta\to\RR$ defined by
 \[
  p(t):=\frac{\partial_2\psi(\rho(t),t)}{\partial_2\varphi(\rho(t),t)}, \qquad t\in\Theta.
 \]

\textit{Second proof for the sufficiency (without using the axiom of choice)}. We are going to show that our assertion (v) implies condition (iii) of Theorem~\ref{Lem_psi_est_eq_5}.

Assume that assertion (v) of the present theorem holds.
Let $t\in\Theta$ and $x,y\in X$ be such that $\vartheta_1(x) < t < \vartheta_1(y)$.
Since $\vartheta_1(X)=\Theta$, there exists $z\in X$ such that $t=\vartheta_1(z)$, yielding that $\vartheta_1(x)< \vartheta_1(z) < \vartheta_1(y)$.
By applying \eqref{phi_est_inequality_6} for the pairs $x,z$ and $y,z$, we get
 \begin{align}\label{help_9}
   -\frac{\psi(x,\vartheta_1(z))}{\partial_2\psi(z,\vartheta_1(z))}
    \leq -\frac{\varphi(x,\vartheta_1(z))}{\partial_2\varphi(z,\vartheta_1(z))}
    \qquad \text{and} \qquad
    -\frac{\psi(y,\vartheta_1(z))}{\partial_2\psi(z,\vartheta_1(z))}
    \leq -\frac{\varphi(y,\vartheta_1(z))}{\partial_2\varphi(z,\vartheta_1(z))}.
 \end{align}
Since $\varphi(x,\vartheta_1(z))<0$ and $\partial_2\varphi(z,\vartheta_1(z))<0$, the first inequality in \eqref{help_9} implies that
  \[
    0 < \frac{\varphi(x,\vartheta_1(z))}{\partial_2\varphi(z,\vartheta_1(z))}
      \leq \frac{\psi(x,\vartheta_1(z))}{\partial_2\psi(z,\vartheta_1(z))}.
  \]
Since $\psi(y,\vartheta_1(z))>0$ and $\partial_2\psi(z,\vartheta_1(z))<0$, the second inequality in \eqref{help_9} yields that
  \[
    0 <  -\frac{\psi(y,\vartheta_1(z))}{\partial_2\psi(z,\vartheta_1(z))}
    \leq -\frac{\varphi(y,\vartheta_1(z))}{\partial_2\varphi(z,\vartheta_1(z))}.
  \]
Multiplying the previous two inequalities, we have
 \[
   -\frac{\varphi(x,\vartheta_1(z))}{\partial_2\varphi(z,\vartheta_1(z))}
     \cdot\frac{\psi(y,\vartheta_1(z))}{\partial_2\psi(z,\vartheta_1(z))}
     \leq
    -\frac{\psi(x,\vartheta_1(z))}{\partial_2\psi(z,\vartheta_1(z))}
     \cdot\frac{\varphi(y,\vartheta_1(z))}{\partial_2\varphi(z,\vartheta_1(z))}.
 \]
Since $\partial_2\psi(z,\vartheta_1(z))\cdot \partial_2\varphi(z,\vartheta_1(z)) >0$, we have
 \[
    \psi(x,\vartheta_1(z)) \varphi(y,\vartheta_1(z)) \leq  \varphi(x,\vartheta_1(z)) \psi(y,\vartheta_1(z)).
 \]
Consequently, using that $\vartheta_1(z)=t$, we get
 \[
    \psi(x,t) \varphi(y,t) \leq \psi(y,t)\varphi(x,t),
 \]
i.e., we have \eqref{psi_est_inequality_3}, as desired.
This proves that assertion (iii) of Theorem~\ref{Lem_psi_est_eq_5} is valid.
\end{proof}

Theorem \ref{Thm_inequality_diff} gives back the comparison theorem for deviation means proved by Dar\'oczy \cite[part 1, Section 3]{Dar71b} (see also Dar\'oczy and P\'ales \cite[Theorem
2]{DarPal82}), since in the special case in question one can choose $X:=\Theta$ and we have that $\vartheta_1(x)=x$, $x\in X$.

Next, we can solve the equality problem for generalized $\psi$-estimators.

\begin{Thm}\label{Thm_equality}
Let $\psi\in\Psi[T,Z_1](X,\Theta)$ and $\varphi\in\Psi[Z](X,\Theta)$.
Assume that $\vartheta_{1,\psi}=\vartheta_{1,\varphi}$ on $X$.
Then $\Theta_\psi=\Theta_\varphi$ and the following assertions are equivalent:
\vspace{-3mm}
 \begin{enumerate}[(i)]
 \item The equality
 \begin{align}\label{psi_est_inequality_2_eq}
   \vartheta_{n,\psi}(x_1,\ldots,x_n) = \vartheta_{n,\varphi}(x_1,\ldots,x_n)
 \end{align}
 holds for each $n\in\NN$ and $x_1,\dots,x_n\in X$.
 \item The equality
 \Eq{psi_est_inequality_2.5_eq}{
  \vartheta_{k+m,\psi}(\underbrace{x,\ldots,x}_{k}, \underbrace{y,\ldots,y}_{m})
  = \vartheta_{k+m,\varphi}(\underbrace{x,\ldots,x}_{k}, \underbrace{y,\ldots,y}_{m})
 }
 holds for each $k,m\in\NN$ and $x,y\in X$.
 \item There exists a positive function $p:\Theta_\varphi\to(0,\infty)$ such that
 \begin{align}\label{psi_est_inequality_5_eq}
    \psi(z,t) = p(t)\varphi(z,t), \qquad z\in X,\, t\in\Theta_\varphi.
 \end{align}
\end{enumerate}
\end{Thm}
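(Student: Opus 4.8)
The plan is to reduce the equality problem to the two comparison theorems already established, using that the equality $\vartheta_{n,\psi}=\vartheta_{n,\varphi}$ is exactly the conjunction of the two inequalities $\vartheta_{n,\psi}\leq\vartheta_{n,\varphi}$ and $\vartheta_{n,\varphi}\leq\vartheta_{n,\psi}$, and likewise for two-point samples. The auxiliary claim $\Theta_\psi=\Theta_\varphi$ is immediate: the set in \eqref{theta_psi} depends only on the function $\vartheta_{1,\psi}$, and since $\vartheta_{1,\psi}=\vartheta_{1,\varphi}$ on $X$, the two open intervals coincide; I will write $\Theta_0$ for this common interval. The implication (i)$\Rightarrow$(ii) is the trivial specialization of (i) to $n=k+m$ and two-point samples.

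For the crucial step (ii)$\Rightarrow$(iii), I would first note that (ii) is the conjunction of the two-point inequality \eqref{psi_est_inequality_2.5} and its reverse. By the equivalence (ii)$\Leftrightarrow$(iv) of Theorem~\ref{Lem_psi_est_eq_5} (applicable since $\psi\in\Psi[T,Z_1](X,\Theta)$ and $\varphi\in\Psi[Z](X,\Theta)$), the forward inequality yields a nonnegative $p\colon\Theta_0\to\RR_+$ with $\psi(z,t)\leq p(t)\varphi(z,t)$ for all $z\in X$ and $t\in\Theta_0$. Applying Theorem~\ref{Lem_phi_est_eq_5} with the roles of $\psi$ and $\varphi$ interchanged (legitimate since $\varphi\in\Psi[Z](X,\Theta)$ and $\psi\in\Psi[T,Z_1](X,\Theta)$), the reverse inequality yields a nonnegative $q\colon\Theta_0\to\RR_+$ with $q(t)\varphi(z,t)\leq\psi(z,t)$. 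Combining the two, I obtain the sandwich
\[
  q(t)\varphi(z,t)\leq\psi(z,t)\leq p(t)\varphi(z,t),\qquad z\in X,\ t\in\Theta_0.
\]

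To collapse this to an equality I would fix $t\in\Theta_0=\Theta_\varphi$, so there exist $x,y\in X$ with $\vartheta_{1,\varphi}(x)<t<\vartheta_{1,\varphi}(y)$, whence $\varphi(x,t)<0<\varphi(y,t)$ because $\varphi$ is a $Z$-function. Evaluating the sandwich at $z=y$ and dividing by $\varphi(y,t)>0$ gives $q(t)\leq p(t)$; evaluating at $z=x$ and dividing by $\varphi(x,t)<0$ (which reverses the inequalities) gives $q(t)\geq p(t)$. Hence $p(t)=q(t)$, and the sandwich forces $\psi(z,t)=p(t)\varphi(z,t)$ for every $z\in X$. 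Positivity of $p$ is then automatic: since $t\in\Theta_0=\Theta_\psi$, there is $y\in X$ with $\vartheta_{1,\psi}(y)>t$, so $\psi(y,t)>0$, while $\psi(y,t)=p(t)\varphi(y,t)$ with $\varphi(y,t)>0$ forces $p(t)>0$. This establishes (iii).

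Finally, for (iii)$\Rightarrow$(i) I would feed the equality $\psi=p\varphi$ back into the two comparison theorems. Since $p\geq0$ and $\vartheta_{1,\psi}\leq\vartheta_{1,\varphi}$ holds by the hypothesis $\vartheta_{1,\psi}=\vartheta_{1,\varphi}$, condition (iv) of Theorem~\ref{Lem_psi_est_eq_5} is satisfied, giving $\vartheta_{n,\psi}\leq\vartheta_{n,\varphi}$; taking $q:=p>0$ in condition (iv) of Theorem~\ref{Lem_phi_est_eq_5} (roles interchanged) gives the reverse inequality $\vartheta_{n,\varphi}\leq\vartheta_{n,\psi}$. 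Their conjunction is precisely (i). The only genuinely delicate point is the collapse of the sandwich in (ii)$\Rightarrow$(iii); everything else is bookkeeping around the orientation of the two comparison theorems and the sign of $\varphi(\cdot,t)$ at interior points of $\Theta_0$.
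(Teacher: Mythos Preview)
Your proposal is correct and follows essentially the same route as the paper for the preliminary claim $\Theta_\psi=\Theta_\varphi$, for (i)$\Rightarrow$(ii), and for the key step (ii)$\Rightarrow$(iii): the paper likewise applies Theorem~\ref{Lem_psi_est_eq_5} directly and Theorem~\ref{Lem_phi_est_eq_5} with the roles of $\psi$ and $\varphi$ swapped, obtains the sandwich $q(t)\varphi(z,t)\leq\psi(z,t)\leq p(t)\varphi(z,t)$, and collapses it by testing at points $x,y$ with $\varphi(x,t)<0<\varphi(y,t)$.

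The only place you diverge is (iii)$\Rightarrow$(i). You feed the equality $\psi=p\varphi$ back into the two comparison theorems to get both inequalities $\vartheta_{n,\psi}\leq\vartheta_{n,\varphi}$ and $\vartheta_{n,\varphi}\leq\vartheta_{n,\psi}$; this is valid and arguably tidier. The paper instead argues directly: it splits into the case where all $\vartheta_1(x_i)$ coincide (handled by Proposition~\ref{Pro_psi_becsles_mean_prop}) and the case where they do not, and in the latter shows that $\sum_i\psi(x_i,t)$ and $\sum_i\varphi(x_i,t)$ have the same sign for \emph{every} $t\in\Theta$ (not just $t\in\Theta_\varphi$), whence the points of sign change agree. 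The paper's route yields the slightly stronger by-product recorded in Remark~\ref{Rem_Thm_eq}(ii), namely that under (iii) the function $\psi$ automatically inherits property $[Z]$; your approach does not extract this, but it is not needed for the theorem as stated.
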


\begin{proof}
The equality $\Theta_\psi=\Theta_\varphi$ follows from $\vartheta_{1,\psi}=\vartheta_{1,\varphi}=: \vartheta_1$.

(i)$\Rightarrow$(ii). This implication becomes obvious by taking $n=k+m$ in assertion (i).

(ii)$\Rightarrow$(iii). Assume that (ii) holds.
If $\Theta_\psi=\Theta_\varphi =\emptyset$, then there is nothing to prove. Thus, we may assume that $\Theta_\psi=\Theta_\varphi \neq\emptyset$.

The equality \eqref{psi_est_inequality_2.5_eq} implies that the inequalities
\Eq{*}{
  \vartheta_{k+m,\psi}(\underbrace{x,\ldots,x}_{k}, \underbrace{y,\ldots,y}_{m})
  &\leq \vartheta_{k+m,\varphi}(\underbrace{x,\ldots,x}_{k}, \underbrace{y,\ldots,y}_{m}), \\
  \vartheta_{k+m,\varphi}(\underbrace{x,\ldots,x}_{k}, \underbrace{y,\ldots,y}_{m})
  &\leq \vartheta_{k+m,\psi}(\underbrace{x,\ldots,x}_{k}, \underbrace{y,\ldots,y}_{m})
}
hold for each $k,m\in\NN$ and $x,y\in X$. Then the equivalence of parts (ii) and (iv) of Theorem \ref{Lem_psi_est_eq_5} and that of parts (ii) and (iv) of Theorem \ref{Lem_phi_est_eq_5} yield that
there exist non-negative functions
 $p,q:\Theta_\varphi\to\RR_+$ such that
 \Eq{*}{
  \psi(z,t) &\leq p(t) \varphi(z,t), &\qquad z&\in X, \;\; t\in\Theta_\varphi,\\[2mm]
  q(t)\varphi(z,t) &\leq \psi(z,t), &\qquad z&\in X, \;\; t\in\Theta_\varphi,
 }
respectively. Consequently, we have
\begin{align}\label{help_12}
   q(t)\varphi(z,t)
   \leq p(t)\varphi(z,t), \qquad z\in X, \;\; t\in\Theta_\varphi.
 \end{align}
For all $t\in\Theta_\varphi$, there exist $x,y\in X$ such that $\vartheta_1(x)<t<\vartheta_1(y)$.
Taking the substitutions $z:=x$ and $z:=y$ in the inequality \eqref{help_12}, and using that $\varphi(x,t)<0$ and $\varphi(y,t)>0$, it follows that $p(t)=q(t)$ for all $t\in\Theta_\varphi$.
Consequently, $\psi(z,t)=p(t) \varphi(z,t)$ for all $z\in X$ and $t\in\Theta_\varphi$.
This equality shows that $p$ cannot vanish anywhere.
Indeed, if $p(t)=0$ for some $t\in\Theta_\varphi$, then $\psi(z,t)=0$ for all $z\in X$.
However, $t\in\Theta_\varphi=\Theta_\psi$ implies the existence of $x\in X$ such that $\vartheta_1(x)<t$,
 yielding that $\psi(x,t)<0$, which leads us to a contradiction.
Thus, $p$ has to be positive everywhere and the assertion (iii) holds.

(iii)$\Rightarrow$(i). Let $n\in\NN$ and $(x_1,\dots,x_n)\in X^n$ be fixed. If
\Eq{*}{
  \vartheta_1(x_1)=\dots=\vartheta_1(x_n):=t_0,
}
then, according to Proposition~\ref{Pro_psi_becsles_mean_prop}, it follows that
\Eq{*}{
  \vartheta_{n,\psi}(x_1,\dots,x_n)=t_0
  =\vartheta_{n,\varphi}(x_1,\dots,x_n).
}
Now assume that $\min(\vartheta_1(x_1),\dots,\vartheta_1(x_n))<\max(\vartheta_1(x_1),\dots,\vartheta_1(x_n))$. Then $\Theta_\psi=\Theta_\varphi$ is not the emptyset, and for each
$t\in\Theta_\varphi$, we have
 \[
     \sign\left( \sum_{i=1}^n \psi(x_i,t) \right)
         = \sign\left( \sum_{i=1}^n p(t)\varphi(x_i,t) \right)
         = \sign\left( \sum_{i=1}^n\varphi(x_i,t) \right),
 \]
since $p(t)>0$ and $t\in\Theta_\varphi$.
Recall that the nonempty set $\Theta_\varphi$ is the interior of the convex hull of $\vartheta_1(X)$ (see the discussion before
 Theorem \ref{Lem_psi_est_eq_5}).
Consequently, for $t\in\Theta$ with $t\leq\inf\Theta_\varphi$, we have that $t\leq\min(\vartheta_1(x_1),\dots,\vartheta_1(x_n))$, thus, for all $i\in\{1,\dots,n\}$, the property $[Z_1]$ of $\varphi$
and $\psi$ yield that $\psi(x_i,t)\geq0$ and $\varphi(x_i,t)\geq0$.
Furthermore, for some  $j\in\{1,\dots,n\}$, we have that $t<\vartheta_1(x_j)$, which implies the strict inequalities
 $\psi(x_j,t)>0$ and $\varphi(x_j,t)>0$ (in fact, one can choose $j$ as an index for which
 $\vartheta_1(x_j) = \max(\vartheta_1(x_1),\dots,\vartheta_1(x_n))$).
Therefore $\sum_{i=1}^n \psi(x_i,t)>0$ and $\sum_{i=1}^n\varphi(x_i,t)>0$ for $t\in\Theta$ with $t\leq\inf\Theta_\varphi$.
On the other hand, for $t\in\Theta$ with $t\geq\sup\Theta_\varphi$, we can similarly see that the reversed inequalities
  $\sum_{i=1}^n \psi(x_i,t)<0$ and $\sum_{i=1}^n\varphi(x_i,t)<0$ hold.
Thus, we have proved that
 \Eq{sign-eq}{
     \sign\left( \sum_{i=1}^n \psi(x_i,t) \right)
         = \sign\left( \sum_{i=1}^n\varphi(x_i,t) \right)
 }
 is valid for all $t\in\Theta$.
This, taking into account that $\psi$ and $\varphi$ have the property $[T_n]$, shows that the maps
 $$
  \Theta\ni t\mapsto\sum_{i=1}^n \psi(x_i,t)
  \qquad\mbox{and}\qquad
  \Theta\ni t\mapsto\sum_{i=1}^n \varphi(x_i,t)
 $$
must have the same point of sign change, whence the assertion (i) follows.
\end{proof}

\begin{Rem}\label{Rem_Thm_eq}
\rm
(i).\
Roughly speaking, the equivalence of parts (i) and (ii) of Theorem \ref{Thm_equality} means that if the $\psi$- and $\varphi$-estimators (satisfying the conditions of Theorem \ref{Thm_equality}) coincide on
any realization of samples of arbitrary length but having only two different observations, then the two estimators in question coincide on any realization of samples of arbitrary length without any restriction on the number of different observations.

(ii).\
Under the assumptions of Theorem \ref{Thm_equality}, if any of its equivalent assertions (i), (ii) and (iii) hold and $\Theta_\varphi$ is not empty, then $\psi$ has the property $[Z]$ as well.
Indeed, in the second case of the proof of (iii)$\Rightarrow$(i), we have checked that for all $n\in\NN$, $(x_1,\ldots,x_n)\in X^n$ and $t\in\Theta$, the sums $\sum_{i=1}^n \psi(x_i,t)$ and
$\sum_{i=1}^n \varphi(x_i,t)$ have the same sign (see \eqref{sign-eq}).
Consequently, using that $\varphi$ has the property $[Z]$, it follows that $\psi$ has the property $[Z]$ as well.

(iii).\
The statement in Theorem \ref{Thm_equality} remains true under the assumptions $\psi\in\Psi[Z](X,\Theta)$, $\varphi\in\Psi[T,Z_1](X,\Theta)$,
 and $\vartheta_{1,\psi}=\vartheta_{1,\varphi}$ on $X$ as well.
This can be readily seen from the proof of Theorem \ref{Thm_equality}.

(iv).\
The proof of Theorem \ref{Thm_equality} shows that its assumptions can be weakened in the following sense.
For the implication (i)$\Rightarrow$(ii), it is enough to assume that $\psi$ and $\varphi$ have the property $[T]$.
For the implication (iii)$\Rightarrow$(i), it is enough to assume that both $\psi$ and $\varphi$ possess the properties $[T]$ and $[Z_1]$.

(v).\
It can happen that $\vartheta_{n,\psi}(\bx) = \vartheta_{n,\varphi}(\bx)$ holds for all $n\in\NN$ and $\bx\in X^n$
 with $\psi,\varphi\in \Psi(X,\Theta)$, but Theorem \ref{Thm_equality} cannot be applied.
For example, let \ $X:=\{x_1,x_2\}$, \ $\Theta:=\RR$, \ and
 \begin{align*}
  &\psi(x_j,t):=j\big(\bone_{(-\infty,0]}(t) - \bone_{(0,\infty)}(t) \big),\qquad t\in\RR,\quad j\in\{1,2\},\\
  &\varphi(x_j,t):=-jt,\qquad t\in\RR,\quad j\in\{1,2\}.
 \end{align*}
Then $\psi$ is a $T$-function with $\vartheta_{n,\psi}(\bx)=0$ for all $n\in\NN$ and $\bx\in X^n$,
 and $\varphi$ is a $Z$-function with $\vartheta_{n,\varphi}(\bx)=0$ for all $n\in\NN$ and $\bx\in X^n$.
Consequently, $\Theta_\psi = \Theta_\varphi = \emptyset$, and $\vartheta_{n,\psi}(\bx) = \vartheta_{n,\varphi}(\bx)$
 for all $n\in\NN$ and $\bx\in X^n$.
Note also that $\psi$ does not satisfy the property $[Z_1]$, since $\psi(x_j,0)=j\ne 0$, $j\in\{1,2\}$,
 and hence we cannot apply Theorem \ref{Thm_equality}.
\proofend
\end{Rem}

In the next result, under some additional regularity assumptions on $\psi$ and $\varphi$, we derive another
 set of conditions that are equivalent to any of the equivalent conditions (i), (ii) and (iii) of Theorem \ref{Thm_equality}.
This result can be considered as a counterpart of the second part of Theorem 8 in P\'ales \cite{Pal88a} for quasi-deviation means.

\begin{Cor}\label{Cor_equality_diff}
Let $\psi,\varphi\in\Psi[C,Z](X,\Theta)$.
Assume that $\vartheta_{1,\psi}=\vartheta_{1,\varphi}=:\vartheta_1$ on $X$, $\vartheta_1(X)=\Theta$, and, for all $x\in X$, the maps
$$
  \Theta\ni t\mapsto\psi(x,t) \qquad\mbox{and}\qquad
  \Theta\ni t\mapsto\varphi(x,t)
$$
are differentiable at $\vartheta_1(x)$ with a non-vanishing derivative.
Then any of the equivalent assertions  (i), (ii) and (iii) of Theorem \ref{Thm_equality} is equivalent to the following one:
\vspace{-3mm}
\begin{enumerate}
 \item[(iv)] For all $x,y\in X$, we have
 \Eq{phi_est_equality_6}{
    \frac{\psi(y,\vartheta_1(x))}{\partial_2\psi(x,\vartheta_1(x))}
     = \frac{\varphi(y,\vartheta_1(x))}{\partial_2\varphi(x,\vartheta_1(x))}.
}
\end{enumerate}
\end{Cor}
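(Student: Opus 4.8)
The plan is to deduce the equality characterization from the inequality characterization by invoking Theorem~\ref{Thm_inequality_diff} twice: once for the ordered pair $(\psi,\varphi)$ and once for the ordered pair $(\varphi,\psi)$. The starting point is the trivial logical observation that assertion (i) of Theorem~\ref{Thm_equality}, namely the equality $\vartheta_{n,\psi}(x_1,\dots,x_n)=\vartheta_{n,\varphi}(x_1,\dots,x_n)$ for every $n\in\NN$ and all $x_1,\dots,x_n\in X$, holds if and only if the two comparison inequalities $\vartheta_{n,\psi}\leq\vartheta_{n,\varphi}$ and $\vartheta_{n,\varphi}\leq\vartheta_{n,\psi}$ are simultaneously valid on $X^n$ for every $n\in\NN$ (the universal quantifier over $n$ and over samples distributes over the conjunction $a\leq b\,\wedge\,b\leq a$).

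First I would record that the hypotheses of the present corollary coincide with those of Theorem~\ref{Thm_inequality_diff} and, crucially, are symmetric in $\psi$ and $\varphi$: both functions belong to $\Psi[C,Z](X,\Theta)\subseteq\Psi[T,Z_1](X,\Theta)$, they share a common first-order estimator $\vartheta_1$ with $\vartheta_1(X)=\Theta$, and for each $x\in X$ the maps $t\mapsto\psi(x,t)$ and $t\mapsto\varphi(x,t)$ are differentiable at $\vartheta_1(x)$ with non-vanishing derivative. Consequently Theorem~\ref{Thm_inequality_diff} is applicable to either ordering of the two functions.

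Next I would apply the theorem in both orderings. For the pair $(\psi,\varphi)$, the inequality $\vartheta_{n,\psi}\leq\vartheta_{n,\varphi}$ (for all $n$ and all samples) is equivalent to assertion (v) of that theorem, that is, to \eqref{phi_est_inequality_6}. For the pair $(\varphi,\psi)$, the inequality $\vartheta_{n,\varphi}\leq\vartheta_{n,\psi}$ is equivalent to the reversed inequality
\[
  -\frac{\varphi(y,\vartheta_1(x))}{\partial_2\varphi(x,\vartheta_1(x))}
  \leq -\frac{\psi(y,\vartheta_1(x))}{\partial_2\psi(x,\vartheta_1(x))},
  \qquad x,y\in X.
\]
Combining these two equivalences with the observation of the first paragraph, the equality of the estimators on all samples is equivalent to the joint validity of \eqref{phi_est_inequality_6} and its reverse for all $x,y\in X$; since two reals satisfying both $u\leq v$ and $v\leq u$ coincide, this is in turn equivalent to
\[
  -\frac{\psi(y,\vartheta_1(x))}{\partial_2\psi(x,\vartheta_1(x))}
  = -\frac{\varphi(y,\vartheta_1(x))}{\partial_2\varphi(x,\vartheta_1(x))},
  \qquad x,y\in X,
\]
which, upon cancelling the common sign, is precisely assertion (iv), i.e.\ \eqref{phi_est_equality_6}.

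I do not anticipate a genuine difficulty here, since the whole argument rests on a two-sided use of an already established equivalence; the only points requiring care are verifying that the hypotheses of Theorem~\ref{Thm_inequality_diff} are invariant under interchanging $\psi$ and $\varphi$ (in particular that the property $[Z]$ forces membership in $\Psi[T,Z_1]$, so that Theorem~\ref{Lem_psi_est_eq_5}, and hence Theorem~\ref{Thm_inequality_diff}, may indeed be invoked in both orderings) and that the negative signs occurring in assertion (v) are tracked consistently when the two opposite inequalities are merged into a single equality. A direct route mirroring the second proof of \eqref{phi_est_inequality_6}---passing to the one-sided limits in the equality form $\psi(x,t)\varphi(y,t)=\psi(y,t)\varphi(x,t)$ supplied by assertion (iii) of Theorem~\ref{Thm_equality}---is also available, but the symmetric double application above is shorter and avoids repeating the limiting computation.
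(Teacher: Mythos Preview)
Your proposal is correct and takes essentially the same approach as the paper, which simply states ``It readily follows from Theorem~\ref{Thm_inequality_diff}.'' Your write-up makes explicit the symmetric double application of that theorem and the verification that the hypotheses are invariant under swapping $\psi$ and $\varphi$, which is exactly what the paper's one-line proof is abbreviating.
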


\begin{proof}
It readily follows from Theorem \ref{Thm_inequality_diff}.
\end{proof}

\section{Comparison and equality of Bajraktarevi\'c-type estimators}
\label{Sec_comp_equal_Bajrak}

In this section we apply our results in Section \ref{Section_comp_equal_psi_est} for solving the comparison and equality problems
 of Bajraktarevi\'c-type estimators that are special generalized $\psi$-estimators.

First, we recall the notions of Bajraktarevi\'c-type functions and then Bajraktarevi\'c-type estimators.
Let $X$ be a nonempty set, $\Theta$ be a nondegenerate open interval of $\RR$, $f:\Theta\to\RR$ be strictly increasing, $p:X\to\RR_{++}$ and $F:X\to\conv(f(\Theta))$.
In terms of these functions, define $\psi:X\times\Theta\to\RR$ by
\Eq{help16}{
   \psi(x,t):=p(x)(F(x)-f(t)), \qquad x\in X, \;
   t\in\Theta.
}
By Lemma 1 in Gr\"unwald and P\'ales \cite{GruPal},
 there exists a uniquely determined monotone function $g:\conv(f(\Theta))\to \Theta$ such that $g$ is the left inverse of $f$, i.e.,
\Eq{*}{
   (g\circ f)(t)&=t, \qquad t\in \Theta.
}
Furthermore, $g$ is monotone in the same sense as $f$  (i.e, $f$ is monotone increasing),
 is continuous, and the following relation holds:
\Eq{*}{
  (f\circ g)(y)=y, \qquad y\in f(\Theta).
}
The function $g:\conv(f(\Theta))\to \Theta$ is called the \emph{generalized left inverse of the strictly increasing function $f:\Theta\to\RR$} and is denoted by $f^{(-1)}$.
It is clear that the restriction of $f^{(-1)}$ to $f(\Theta)$ is the inverse of $f$ in the standard sense, which is also  strictly increasing.
Therefore, $f^{(-1)}$ is the continuous and monotone extension of the inverse of $f$ to the smallest interval containing the range of $f$, that is, to the convex hull of $f(\Theta)$.

Recall also that, by Proposition 2.19 in Barczy and P\'ales \cite{BarPal2}, under the above assumptions, $\psi$ is a $T_n^{\blambda}$-function for each $n\in\NN$ and $\blambda\in\Lambda_n$, and
 \begin{align}\label{help14}
  \vartheta_{n,\psi}^{\blambda}(\bx)
  =f^{(-1)}\bigg(\frac{\lambda_1p(x_1)F(x_1)+\dots+\lambda_np(x_n)F(x_n)}{\lambda_1p(x_1)+\dots+\lambda_np(x_n)}\bigg)
 \end{align}
 for each $n\in\NN$, $\bx=(x_1,\dots,x_n)\in X^n$ and $\blambda=(\lambda_1,\dots,\lambda_n)\in\Lambda_n$.
In particular, $\vartheta_{1,\psi}=f^{(-1)}\circ F$ holds.
The value $\vartheta_{n,\psi}^{\blambda}(\bx)$ given by \eqref{help14} can be called as a Bajraktarevi\'c-type $\psi$-estimator
 of some unknown parameter in $\Theta$ based on the realization $\bx=(x_1,\ldots,x_n)\in X^n$ and weights $(\lambda_1,\ldots,\lambda_n)\in\Lambda_n$
 corresponding to the Bajraktarevi\'c-type function given by \eqref{help16}.
In particular, if $p=1$ is a constant function in \eqref{help16}, then we speak about a quasi-arithmetic-type $\psi$-estimator.

As a first result, we give a necessary and sufficient condition in order that
 $\Theta_\psi=\emptyset$ hold, where $\Theta_\psi$ is given by \eqref{theta_psi}.

\begin{Lem}\label{Lem_Theta_psi_empty}
Let $X$ be a nonempty set, $\Theta$ be a nondegenerate open interval of $\RR$, $f:\Theta\to\RR$ be a strictly increasing function,
 $p:X\to\RR_{++}$, $F:X\to \conv(f(\Theta))$, and define $\psi:X\times\Theta\to\RR$ by \eqref{help16}.
Then $\Theta_\psi = \emptyset$ holds if and only if there exists $t_0\in \Theta$ such that
 the range $F(X)$ of $F$ is contained in $[f(t_0-0),f(t_0+0)]$, where $f(t_0-0)$ and $f(t_0+0)$ denote the
 left and right hand limits of $f$ at $t_0$, respectively.
\end{Lem}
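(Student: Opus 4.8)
The plan is to reduce the statement to a level-set property of the generalized left inverse $f^{(-1)}$. First I would recall two facts already recorded in the excerpt: that $\vartheta_{1,\psi}=f^{(-1)}\circ F$ (the $n=1$ case of \eqref{help14}), and that $\Theta_\psi$ is precisely the interior of $\conv(\vartheta_{1,\psi}(X))$ (see the discussion preceding Theorem~\ref{Lem_psi_est_eq_5}). Since $X$ is nonempty, $\vartheta_{1,\psi}(X)$ is a nonempty subset of $\RR$ whose convex hull is an interval, and that interval has empty interior exactly when it degenerates to a single point. Hence $\Theta_\psi=\emptyset$ holds if and only if $\vartheta_{1,\psi}(X)$ is a singleton, i.e.\ if and only if there exists $t_0\in\Theta$ with $f^{(-1)}(F(x))=t_0$ for every $x\in X$. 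This turns the assertion into a question about where $f^{(-1)}$ takes a prescribed value.

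The core step is therefore the characterization: for $y\in\conv(f(\Theta))$ and $t_0\in\Theta$, one has $f^{(-1)}(y)=t_0$ if and only if $f(t_0-0)\leq y\leq f(t_0+0)$. For the forward implication I would argue by contradiction using that $f^{(-1)}$ is monotone increasing: if, say, $y>f(t_0+0)=\inf_{t>t_0}f(t)$, then some $t_1>t_0$ satisfies $f(t_1)\leq y$, whence $t_1=f^{(-1)}(f(t_1))\leq f^{(-1)}(y)=t_0$, a contradiction; the case $y<f(t_0-0)$ is symmetric. For the converse I would exploit that $f$ is strictly increasing, so that $f(t_0-0)=\sup_{t<t_0}f(t)$ and $f(t_0+0)=\inf_{t>t_0}f(t)$ exist (using that $\Theta$ is open, so $t_0$ is interior); passing to the limit in the identity $f^{(-1)}(f(t))=t$ as $t\uparrow t_0$ and as $t\downarrow t_0$ and invoking the continuity of $f^{(-1)}$ yields $f^{(-1)}(f(t_0\mp 0))=t_0$, and then monotonicity pins $f^{(-1)}$ to the constant value $t_0$ on the whole interval $[f(t_0-0),f(t_0+0)]$.

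Finally I would assemble the pieces. The characterization shows that the level set $\{y\in\conv(f(\Theta)):f^{(-1)}(y)=t_0\}$ equals $[f(t_0-0),f(t_0+0)]\cap\conv(f(\Theta))$; since $F(X)\subseteq\conv(f(\Theta))$ by hypothesis, the condition ``$f^{(-1)}\circ F\equiv t_0$'' is equivalent to $F(X)\subseteq[f(t_0-0),f(t_0+0)]$. Combined with the reduction of the first paragraph, this gives the stated equivalence. I expect the main obstacle to be the careful verification of the level-set characterization, and in particular the handling of the endpoints $f(t_0\pm 0)$, which need not belong to the range $f(\Theta)$: one cannot simply invert $f$ there and must instead rely on the continuity of the generalized inverse together with the sup/inf description of the one-sided limits.
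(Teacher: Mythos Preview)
Your argument is correct and takes a genuinely different route from the paper. You reduce everything to the level-set identity $\{y\in\conv(f(\Theta)):f^{(-1)}(y)=t_0\}=[f(t_0-0),f(t_0+0)]$, leveraging the pre-established formula $\vartheta_{1,\psi}=f^{(-1)}\circ F$ and the continuity of the generalized inverse. The paper instead works directly from the sign-change definition: for the ``if'' direction it simply observes that $f(t')<f(t_0-0)\leq F(x)\leq f(t_0+0)<f(t'')$ whenever $t'<t_0<t''$, so $t_0$ is the sign-change point of $t\mapsto p(x)(F(x)-f(t))$ for every $x$; for the ``only if'' direction it exploits the disjoint decomposition $\conv(f(\Theta))=\bigcup_{t\in\Theta}[f(t-0),f(t+0)]$ to locate $x_1,x_2\in X$ with $F(x_i)\in J_f(t_i)$ for distinct $t_1<t_2$, whence $(t_1,t_2)\subseteq\Theta_\psi$. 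Your approach is cleaner conceptually---it makes explicit that the lemma is a statement about fibres of $f^{(-1)}$---while the paper's is more self-contained, never invoking the continuity of $f^{(-1)}$ and instead making the partition structure of $\conv(f(\Theta))$ visible.
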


\begin{proof}
First, let us suppose that there exists $t_0\in \Theta$ such that $F(X)\subseteq J_f(t_0):=[f(t_0-0),f(t_0+0)]$.
Then, using that $f$ is strictly increasing, for all $x\in X$ and $t',t''\in\Theta$ with $t'<t_0<t''$,
 we have that $f(t')<f(t_0-0)\leq F(x)\leq f(t_0+0)<f(t'')$, and therefore, taking into account that $p$ is positive,
 for all $x\in X$, we get
 \[
   p(x)(F(x) - f(t))
            \begin{cases}
                 >0   & \text{if $t<t_0$, $t\in\Theta$,}\\
                 <0 & \text{if $t>t_0$, $t\in\Theta$.}
               \end{cases}
 \]
Hence, $\vartheta_{1,\psi}(x)=t_0$ for all $x\in X$.
This yields that $\Theta_\psi = \emptyset$.

To prove the converse statement, we check that if there does not exist $t_0\in\Theta$ such that $F(X)\subseteq J_f(t_0)$,
 then $\Theta_\psi\ne \emptyset$.
Since $f$ is strictly increasing, we have
 \[
    \conv(f(\Theta)) = \bigcup_{t\in\Theta} J_f(t),
 \]
 and $J_f(t')\cap J_f(t'') = \emptyset$ for all $t',t''\in\Theta$ with $t'\neq t''$.
Using also that $F(X)\subseteq \conv(f(\Theta))$, there exist $t_1,t_2\in\Theta$ with $t_1<t_2$ such that
 $F(X)\cap J_f(t_i)\ne \emptyset$, $i=1,2$.
Hence there exist $x_1,x_2\in X$ such that $F(x_i)\in J_f(t_i)$, $i=1,2$.
Consequently, similarly as before, we have
  \[
   p(x_i)(F(x_i) - f(t))
            \begin{cases}
                 >0   & \text{if $t<t_i$, $t\in\Theta$,}\\
                 <0 & \text{if $t>t_i$, $t\in\Theta$,}
               \end{cases}
               \qquad i=1,2,
 \]
yielding that $\vartheta_{1,\psi}(x_i)=t_i$, $i=1,2$.
Therefore, $(t_1,t_2)\subseteq \Theta_\psi$, showing that $\Theta_\psi$ is not empty, as expected.
\end{proof}

\begin{Lem}\label{Lem_p_negativ_ellentmod}
Let $X$ be a nonempty set, $\Theta$ be a nondegenerate open interval of $\RR$, $f:\Theta\to\RR$ be a strictly increasing function,
$p:X\to\RR_{++}$, $F:X\to \conv(f(\Theta))$, and define $\psi:X\times\Theta\to\RR$ by \eqref{help16}. Then, for all $x,y\in X$ with $\vartheta_{1,\psi}(x)<\vartheta_{1,\psi}(y)$, the map \eqref{umap} is positive and strictly increasing.
\end{Lem}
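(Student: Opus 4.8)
The plan is to exploit the explicit form $\vartheta_{1,\psi}=f^{(-1)}\circ F$, recorded just before Lemma~\ref{Lem_Theta_psi_empty}, together with the point-of-sign-change characterization of $\vartheta_{1,\psi}$. Fix $x,y\in X$ with $\vartheta_{1,\psi}(x)<\vartheta_{1,\psi}(y)$ and let $u\in(\vartheta_{1,\psi}(x),\vartheta_{1,\psi}(y))$. First I would translate the two strict inequalities $\vartheta_{1,\psi}(x)<u$ and $u<\vartheta_{1,\psi}(y)$ into sign conditions on $\psi$. Since $\psi$ is a $T_1$-function (being a $T_n^{\blambda}$-function for all $n\in\NN$ and $\blambda\in\Lambda_n$, as recalled via \eqref{help14}), the value $\vartheta_{1,\psi}(x)$ is the point of sign change of $t\mapsto\psi(x,t)=p(x)(F(x)-f(t))$; hence $u>\vartheta_{1,\psi}(x)$ forces $\psi(x,u)<0$ and $u<\vartheta_{1,\psi}(y)$ forces $\psi(y,u)>0$. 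As $p(x),p(y)>0$, these amount to $F(x)<f(u)$ and $f(u)<F(y)$, so that
\[
  F(x)<f(u)<F(y),\qquad u\in(\vartheta_{1,\psi}(x),\vartheta_{1,\psi}(y)).
\]

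Next I would rewrite the map \eqref{umap} in a form that makes both claimed properties transparent. Using $\psi(x,u)=p(x)(F(x)-f(u))$ and $\psi(y,u)=p(y)(F(y)-f(u))$ one gets
\[
  -\frac{\psi(x,u)}{\psi(y,u)}
  =\frac{p(x)}{p(y)}\cdot\frac{f(u)-F(x)}{F(y)-f(u)}.
\]
By the displayed inequalities both $f(u)-F(x)$ and $F(y)-f(u)$ are positive, and $p(x)/p(y)>0$, so the whole expression is positive and, in particular, well defined; this settles positivity. Since $p(x)/p(y)$ is a positive constant, it then suffices to show that $u\mapsto\dfrac{f(u)-F(x)}{F(y)-f(u)}$ is strictly increasing on $(\vartheta_{1,\psi}(x),\vartheta_{1,\psi}(y))$.

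For the monotonicity I would argue directly, relying only on the strict monotonicity of $f$. Writing $a:=F(x)$ and $b:=F(y)$ with $a<b$, and taking $u_1<u_2$ in the interval, strict monotonicity of $f$ gives $a<f(u_1)<f(u_2)<b$; setting $s:=f(u_1)$ and $t:=f(u_2)$, the desired inequality $\frac{s-a}{b-s}<\frac{t-a}{b-t}$ follows, after cross-multiplying the two positive denominators, from
\[
  (t-a)(b-s)-(s-a)(b-t)=(b-a)(t-s)>0,
\]
which holds because $b>a$ and $t>s$. This yields strict monotonicity and completes the proof. The only point requiring care---and the reason I favor the direct cross-multiplication over a derivative computation---is that $f$ is assumed merely strictly increasing, hence need be neither continuous nor differentiable; the argument above uses nothing beyond strict monotonicity of $f$ and positivity of $p$.
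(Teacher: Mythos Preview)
Your proof is correct and follows essentially the same approach as the paper: both arguments reduce the strict monotonicity of \eqref{umap} to the elementary algebraic identity $(t-a)(b-s)-(s-a)(b-t)=(b-a)(t-s)$ (in the paper's notation, $(F(y)-F(x))(f(v)-f(u))>0$ being equivalent to $(F(x)-f(u))(F(y)-f(v))>(F(y)-f(u))(F(x)-f(v))$). Your version is marginally cleaner in that you factor out the positive constant $p(x)/p(y)$ first, but the substance is identical.
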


\begin{proof} Let $x,y\in X$ with $\vartheta_{1,\psi}(x)< \vartheta_{1,\psi}(y)$ and let $u\in (\vartheta_{1,\psi}(x), \vartheta_{1,\psi}(y))$ be arbitrary. Then $\psi(x,u)<0<\psi(y,u)$, which proves that the map \eq{umap} is positive valued. To see the strict monotonicity property, let additionally $v\in (\vartheta_{1,\psi}(x), \vartheta_{1,\psi}(y))$ be arbitrary with $u<v$. Then $\psi(x,v)<0<\psi(y,v)$, which implies that $F(x)<f(v)<F(y)$. Thus $F(x)<F(y)$ and, by the strict monotonicity of $f$, we also have $f(u)<f(v)$. Therefore
\Eq{*}{
  (F(y)-F(x))(f(v)-f(u))>0,
}
which is equivalent to the inequality
\Eq{*}{
  (F(x)-f(u))(F(y)-f(v))>(F(y)-f(u))(F(x)-f(v)).
}
Multiplying this inequality by $\frac{-p(x)}{p(y)(F(y)-f(u))(F(y)-f(v))}<0$ side by side, it follows that
\Eq{*}{
-\frac{\psi(x,u)}{\psi(y,u)}
  = -\frac{p(x)(F(x)-f(u))}{p(y)(F(y)-f(u))}
  <-\frac{p(x)(F(x)-f(v))}{p(y)(F(y)-f(v))}
  =-\frac{\psi(x,v)}{\psi(y,v)}.
}
This completes the proof of the strict increasingness of the map \eqref{umap}.

We note that the statement also follows from the proof of Proposition 2.19 in Barczy and P\'ales \cite{BarPal2}.
\end{proof}

In the following statement we describe sufficient conditions which imply that the function $\psi$ defined by \eqref{help16} possesses the property $[Z_1]$ and $[Z]$, respectively.

\begin{Lem}\label{Lem_Baj_psi_prop}
Let $X$ be a nonempty set, $\Theta$ be a nondegenerate open interval of $\RR$, $f:\Theta\to\RR$ be a strictly increasing function, $p:X\to\RR_{++}$, $F:X\to f(\Theta)$
and define $\psi:X\times\Theta\to\RR$ by \eqref{help16}.
Then $\psi$ has the property $[Z_1]$ and
\Eq{help_23A}{
    \Theta_\psi=\big\{t\in\Theta\mid \exists\, x,y\in X: F(x)<f(t)<F(y)\big\}.
}
If, in addition, $\conv(F(X))\subseteq f(\Theta)$, then $\psi$ has the property $[Z]$ as well.
\end{Lem}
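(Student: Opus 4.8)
The plan is to read everything off the explicit form of the estimator recorded in \eqref{help14}, exploiting that under the hypothesis $F:X\to f(\Theta)$ the generalized left inverse $f^{(-1)}$ acts as a genuine two-sided inverse at the values in question. First I would establish $[Z_1]$. For a fixed $x\in X$ the map $\Theta\ni t\mapsto\psi(x,t)=p(x)(F(x)-f(t))$ is strictly decreasing, since $p(x)>0$ and $f$ is strictly increasing. As $F(x)\in f(\Theta)$, the point $t_x:=f^{(-1)}(F(x))\in\Theta$ satisfies $f(t_x)=F(x)$; hence $\psi(x,t)>0$ for $t<t_x$, $\psi(x,t)<0$ for $t>t_x$, and $\psi(x,t_x)=p(x)(F(x)-f(t_x))=0$. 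Thus $t_x$ is the unique point of sign change, so $\vartheta_{1,\psi}(x)=f^{(-1)}(F(x))$ in accordance with the remark after \eqref{help14}, and $\psi(x,\vartheta_{1,\psi}(x))=0$, which is precisely property $[Z_1]$.

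Next I would derive \eqref{help_23A}. By the definition \eqref{theta_psi}, a point $t\in\Theta$ belongs to $\Theta_\psi$ exactly when $\vartheta_{1,\psi}(x)<t<\vartheta_{1,\psi}(y)$ for some $x,y\in X$. Since $f$ is strictly increasing and $f(\vartheta_{1,\psi}(x))=F(x)$, we have the equivalences $\vartheta_{1,\psi}(x)<t\iff F(x)<f(t)$ and $t<\vartheta_{1,\psi}(y)\iff f(t)<F(y)$; combining them yields \eqref{help_23A} at once.

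Finally, under the extra hypothesis $\conv(F(X))\subseteq f(\Theta)$ I would verify $[Z]$. Fix $n\in\NN$ and $\bx=(x_1,\dots,x_n)\in X^n$. Applying \eqref{help14} with weights $(1,\dots,1)$ shows that $\psi$ is a $T_n$-function whose point of sign change is $\vartheta_{n,\psi}(\bx)=f^{(-1)}(M)$, where $M:=\big(\sum_{i=1}^n p(x_i)F(x_i)\big)\big/\big(\sum_{i=1}^n p(x_i)\big)$ is a convex combination of $F(x_1),\dots,F(x_n)$. Therefore $M\in\conv(F(X))\subseteq f(\Theta)$, so $f(f^{(-1)}(M))=M$, and consequently
\[
 \sum_{i=1}^n\psi(x_i,\vartheta_{n,\psi}(\bx))
 =\sum_{i=1}^n p(x_i)\big(F(x_i)-M\big)
 =\sum_{i=1}^n p(x_i)F(x_i)-M\sum_{i=1}^n p(x_i)=0.
\]
Hence $\psi$ is a $Z_n$-function for every $n$, i.e.\ $\psi$ possesses the property $[Z]$.

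The one point requiring care — and the reason the hypotheses for $[Z_1]$ and $[Z]$ differ — is that when $f$ has jumps the relation $f\circ f^{(-1)}=\mathrm{id}$ holds only on the genuine range $f(\Theta)$, not on all of $\conv(f(\Theta))$. The exact vanishing needed for $[Z_1]$ thus forces $F(x)\in f(\Theta)$, while for $[Z]$ the weighted averages $M$ must themselves be actual values of $f$; this is exactly what $\conv(F(X))\subseteq f(\Theta)$ guarantees, and it is the crux of the argument.
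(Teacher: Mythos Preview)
Your proof is correct and follows essentially the same approach as the paper's: both arguments exploit that $f^{(-1)}$ is a genuine two-sided inverse on $f(\Theta)$ to get $\psi(x,\vartheta_{1,\psi}(x))=0$, translate the defining inequalities of $\Theta_\psi$ via the strict monotonicity of $f$, and then use the explicit formula \eqref{help14} together with $\conv(F(X))\subseteq f(\Theta)$ to see that the weighted average $M$ lies in $f(\Theta)$, whence the sum at $\vartheta_{n,\psi}(\bx)$ vanishes. Your closing paragraph explaining why the two hypotheses differ is a nice addition that the paper leaves implicit.
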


\begin{proof}
Since $F(X)\subseteq f(\Theta)$, the restriction of $f^{(-1)}$ onto $F(X)$ is the strictly increasing inverse of $f$ in the standard sense restricted to $F(X)$. Thus, for all $x\in X$, we have
 \begin{align*}
    \psi(x,\vartheta_{1,\psi}(x))
    & = p(x) (F(x) - f(\vartheta_{1,\psi}(x)))
      = p(x) \big(F(x) - f(f^{(-1)}(F(x))) \big) \\
    & = p(x) (F(x) - F(x))=0,
 \end{align*}
yielding that $\psi$ has the property $[Z_1]$.
Furthermore, using also that $(f^{(-1)}\circ f)(t)=t$, $t\in\Theta$, we get that
\Eq{*}{
 \Theta_\psi&=\big\{t\in\Theta\mid \exists\, x,y\in X: \vartheta_{1,\psi}(x)<t<\vartheta_{1,\psi}(y)\big\} \\
    &= \big\{t\in\Theta\mid \exists\, x,y\in X: f^{(-1)}(F(x))<t<f^{(-1)}(F(y))\big\}\\
    &= \big\{t\in\Theta\mid \exists\, x,y\in X: (f\circ f^{(-1)})(F(x))<f(t)<(f\circ f^{(-1)})(F(y))\big\}\\
    &= \big\{t\in\Theta\mid \exists\, x,y\in X: F(x)<f(t)<F(y)\big\},
}
as desired.

To prove the last assertion, let us assume that $\conv(F(X))\subseteq f(\Theta)$.
Then, by \eqref{help14}, for each $n\in\NN$ and $\bx=(x_1,\ldots,x_n)\in X^n$, we have
 \begin{align*}
    \sum_{i=1}^n \psi(x_i,\vartheta_{n,\psi}(\bx))
       & = \sum_{i=1}^n p(x_i) \big( F(x_i) - f(\vartheta_{n,\psi}(\bx)) \big) \\
       & = \sum_{i=1}^n p(x_i) F(x_i) - \sum_{i=1}^n p(x_i) f\left(f^{(-1)}\left( \sum_{j=1}^n \frac{p(x_j)}{p(x_1)+\cdots +p(x_n)} F(x_j) \right) \right) \\
       & = \sum_{i=1}^n p(x_i) F(x_i) - \sum_{i=1}^n p(x_i) \sum_{j=1}^n \frac{p(x_j)}{p(x_1)+\cdots +p(x_n)} F(x_j)
        =0,
 \end{align*}
 where we used that $\sum_{j=1}^n \frac{p(x_j)}{p(x_1)+\cdots +p(x_n)} F(x_j)\in \conv(F(X)) \subseteq f(\Theta)$.
\end{proof}

In the next remark, we point out the fact that \eqref{help_23A} does not hold in general,
 showing that the assumption $F(X)\subseteq f(\Theta)$ in Lemma \ref{Lem_Baj_psi_prop} is indispensable.

\begin{Rem}\label{Rem_exl1}
\rm
Let $X:=\{x_1,x_2\}$, $\Theta:=\RR$, $p:X\to\RR_{++}$, $f:\RR\to\RR$,
 \[
    f(t):=\begin{cases}
            t & \text{if $t\leq 1$,}\\
            t+1 & \text{if $1<t\leq 2$,}\\
            t+2 & \text{if $t>2$,}
          \end{cases}
 \]
and $F:X\to \RR$ be such that $F(x_1):=1$ and $F(x_2):=3,5$.
Then $\conv(f(\Theta))=\RR$, and
  \[
   p(x_1)(F(x_1) - f(t))
            \begin{cases}
                 >0   & \text{if $t<1$,}\\
                 =0 & \text{if $t=1$,}\\
                 <0 & \text{if $t>1$,}
               \end{cases}
 \]
 and
 \[
   p(x_2)(F(x_2) - f(t))
            \begin{cases}
                 >0   & \text{if $t\leq 2$,}\\
                 <0 & \text{if $t>2$,}
               \end{cases}
 \]
 yielding that $\vartheta_{1,\psi}(x_j)=j$, $j=1,2$.
Hence $\Theta_\psi = (1,2)$.
However,
 \[
  \big\{ t\in\Theta \mid \exists\; x,y\in X: F(x) < f(t) < F(y)\big\}
     = (1,2],
 \]
 which does not coincide with $\Theta_\psi=(1,2)$.
Note that $F(X)\subseteq f(\Theta)$ is also not valid,
 and $\psi$ does not have the property $[Z_1]$, since $\psi(x_2,\vartheta_{1,\psi}(x_2)) = \psi(x_2,2)
 = p(x_2)(F(x_2) - f(2)) = \frac{p(x_2)}{2}>0$.
\proofend
\end{Rem}

Below, we solve the comparison problem for Bajraktarevi\'c-type estimators.

\begin{Pro}\label{Pro_Baj_type_comparison}
Let $X$ be a nonempty set, $\Theta$ be a nondegenerate open interval of $\RR$.
Let $f,g:\Theta\to\RR$ be strictly increasing functions, $p,q:X\to\RR_{++}$, $F:X\to f(\Theta)$, $G:X\to g(\Theta)$,
 and suppose that $\conv(G(X)) \subseteq g(\Theta)$.
Let $\psi:X\times \Theta\to\RR$ and $\varphi: X\times \Theta\to\RR$ be given by
 \begin{align}\label{help_Baj_psi_phi}
 \begin{split}
   &\psi(x,t):=p(x)(F(x)-f(t)), \qquad x\in X, \; t\in\Theta,\\
   &\varphi(x,t):=q(x)(G(x)-g(t)), \qquad x\in X, \; t\in\Theta.
  \end{split}
 \end{align}
Assume that $(f^{(-1)}\circ F)(x)\leq (g^{(-1)}\circ G)(x)$, $x\in X$.
Then $\vartheta_{n,\psi}(\bx)\leq \vartheta_{n,\varphi}(\bx)$ holds for each $n\in\NN$ and $\bx\in X^n$ if and only if
 the inequality
 \begin{align}\label{ineq_Bajrak_type_1}
    \frac{q(y)}{q(x)}\cdot \frac{G(y) - g(t)}{G(x) - g(t)}
       \leq  \frac{p(y)}{p(x)}\cdot \frac{F(y) - f(t)}{F(x) - f(t)}
 \end{align}
 is valid for all $t\in\Theta$ and for all $x,y\in X$ with $G(x)<g(t)<G(y)$.
\end{Pro}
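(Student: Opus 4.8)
The plan is to recognize this as a direct application of Theorem \ref{Lem_psi_est_eq_5}, after translating the abstract comparison inequality \eqref{psi_est_inequality_3} into the explicit form \eqref{ineq_Bajrak_type_1}. First I would check that the pair $(\psi,\varphi)$ falls into the scope of that theorem, namely that $\psi\in\Psi[T,Z_1](X,\Theta)$ and $\varphi\in\Psi[Z](X,\Theta)$. Since $F:X\to f(\Theta)$, Lemma \ref{Lem_Baj_psi_prop} shows that $\psi$ has the property $[Z_1]$; and by Proposition 2.19 in \cite{BarPal2} (whose hypotheses hold because $F(X)\subseteq f(\Theta)\subseteq\conv(f(\Theta))$) the function $\psi$ is a $T_n^{\blambda}$-function for every $n$ and $\blambda$, in particular a $T$-function. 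Likewise, since $G:X\to g(\Theta)$ and $\conv(G(X))\subseteq g(\Theta)$, Lemma \ref{Lem_Baj_psi_prop} yields $\varphi\in\Psi[Z](X,\Theta)$. Recalling from \eqref{help14} that $\vartheta_{1,\psi}=f^{(-1)}\circ F$ and $\vartheta_{1,\varphi}=g^{(-1)}\circ G$, the standing hypothesis $(f^{(-1)}\circ F)(x)\leq(g^{(-1)}\circ G)(x)$ is precisely the first half of assertion (iii) of Theorem \ref{Lem_psi_est_eq_5}.

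By Theorem \ref{Lem_psi_est_eq_5}, assertion (i)---which is exactly the claim that $\vartheta_{n,\psi}(\bx)\leq\vartheta_{n,\varphi}(\bx)$ for all $n$ and $\bx$---is equivalent to assertion (iii). As the inequality $\vartheta_{1,\psi}\leq\vartheta_{1,\varphi}$ in (iii) is already assumed, it remains only to show that, under this assumption, the multiplicative inequality \eqref{psi_est_inequality_3} is equivalent to \eqref{ineq_Bajrak_type_1}. To align the ranges of the variables, I would first note that, since $G(X)\subseteq g(\Theta)$ and $g$ is strictly increasing, $g^{(-1)}$ acts as a genuine inverse on the relevant values, so the condition $\vartheta_{1,\varphi}(x)<t<\vartheta_{1,\varphi}(y)$, i.e.\ $g^{(-1)}(G(x))<t<g^{(-1)}(G(y))$, is equivalent to $G(x)<g(t)<G(y)$; these are exactly the triples $(t,x,y)$ appearing in \eqref{ineq_Bajrak_type_1}.

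Fixing such a triple, from $G(x)<g(t)<G(y)$ I read off $\varphi(x,t)=q(x)(G(x)-g(t))<0$ and $\varphi(y,t)=q(y)(G(y)-g(t))>0$; moreover $\vartheta_{1,\psi}(x)\leq\vartheta_{1,\varphi}(x)<t$ forces $\psi(x,t)=p(x)(F(x)-f(t))<0$, i.e.\ $F(x)<f(t)$. Consequently the quantity $p(x)q(x)(F(x)-f(t))(G(x)-g(t))$ is a product of two positive and two negative factors, hence strictly positive. Writing \eqref{psi_est_inequality_3} out as $p(x)(F(x)-f(t))\,q(y)(G(y)-g(t))\leq p(y)(F(y)-f(t))\,q(x)(G(x)-g(t))$ and dividing by this positive quantity leaves the inequality direction unchanged and produces precisely \eqref{ineq_Bajrak_type_1}; conversely, multiplying \eqref{ineq_Bajrak_type_1} by the same positive quantity recovers \eqref{psi_est_inequality_3}. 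Since the operation is reversible and the two conditions range over the same set of triples, they are equivalent, and combining this with the equivalence (i)$\Leftrightarrow$(iii) of Theorem \ref{Lem_psi_est_eq_5} completes the proof.

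The only genuinely delicate point is the sign bookkeeping. One must be certain that both denominators $F(x)-f(t)$ and $G(x)-g(t)$ are strictly negative, so that the divisor is positive and the inequality is not reversed, and that the range $\vartheta_{1,\varphi}(x)<t<\vartheta_{1,\varphi}(y)$ governing \eqref{psi_est_inequality_3} coincides exactly with the range $G(x)<g(t)<G(y)$ governing \eqref{ineq_Bajrak_type_1}. Both facts rest on the hypothesis $G(X)\subseteq g(\Theta)$, which guarantees that $g^{(-1)}$ inverts $g$ on the relevant values; everything else is a routine rearrangement.
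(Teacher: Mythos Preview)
Your proposal is correct and follows essentially the same route as the paper: both verify the hypotheses $\psi\in\Psi[T,Z_1](X,\Theta)$, $\varphi\in\Psi[Z](X,\Theta)$ via Proposition~2.19 of \cite{BarPal2} and Lemma~\ref{Lem_Baj_psi_prop}, invoke the equivalence (i)$\Leftrightarrow$(iii) of Theorem~\ref{Lem_psi_est_eq_5}, translate the range $\vartheta_{1,\varphi}(x)<t<\vartheta_{1,\varphi}(y)$ into $G(x)<g(t)<G(y)$, and then divide \eqref{psi_est_inequality_3} by the positive quantity $p(x)q(x)(F(x)-f(t))(G(x)-g(t))$. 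The only cosmetic difference is that you obtain $F(x)<f(t)$ from the sign-change property of $\psi$ (since $\vartheta_{1,\psi}(x)<t$ forces $\psi(x,t)<0$), whereas the paper computes it directly from $F(x)=f(f^{(-1)}(F(x)))\leq f(g^{(-1)}(G(x)))<f(g^{(-1)}(g(t)))=f(t)$; both arguments are valid and yield the same conclusion.
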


\begin{proof}
By Proposition 2.19 in Barczy and P\'ales \cite{BarPal2} and Lemma \ref{Lem_Baj_psi_prop},
 $\psi$ has the properties $[T]$ and $[Z_1]$, and $\varphi$ has the property $[Z]$.
Using Theorem \ref{Lem_psi_est_eq_5}, we have that $\vartheta_{n,\psi}(\bx)\leq \vartheta_{n,\varphi}(\bx)$ holds
 for each $n\in\NN$ and $\bx\in X^n$ if and only if the inequality
 \begin{align*}
  \psi(x,t) \varphi(y,t) \leq \psi(y,t) \varphi(x,t)
 \end{align*}
 is valid for all $t\in\Theta$ and for all $x,y\in X$ with $\vartheta_{1,\varphi}(x)<t<\vartheta_{1,\varphi}(y)$.

Using $G(X)\subseteq g(\Theta)$, Lemma~\ref{Lem_Baj_psi_prop} yields that
\Eq{help_23}{
 \Theta_\varphi
 = \big\{t\in\Theta\mid \exists\, x,y\in X: G(x)<g(t)<G(y)\big\},
 }
and for all $t\in\Theta$ and $x,y\in X$, the inequality $\vartheta_{1,\varphi}(x)<t<\vartheta_{1,\varphi}(y)$
 holds if and only if $G(x) < g(t) < G(y)$.
Consequently, $\vartheta_{n,\psi}(\bx)\leq \vartheta_{n,\varphi}(\bx)$ holds for each $n\in\NN$ and $\bx\in X^n$ if and only if
 \begin{align}\label{help17}
   p(x)(F(x) - f(t)) q(y)(G(y) - g(t)) \leq p(y)(F(y) - f(t)) q(x)(G(x) - g(t))
 \end{align}
 is valid for all $t\in\Theta$ and for all $x,y\in X$ with $G(x)<g(t)<G(y)$.
Using that $f$ is strictly increasing, $F(X)\subseteq f(\Theta)$, and $g^{(-1)}$ restricted to $g(\Theta)$ is strictly increasing,
 for all $t\in\Theta$ and for all $x\in X$ with $G(x)<g(t)$, we have that
 \[
   F(x) = (f\circ f^{(-1)})(F(x)) = f(f^{(-1)}(F(x))) \leq f(g^{(-1)}(G(x)))
         < f( g^{(-1)}(g(t)) ) = f(t).
 \]
Consequently, $G(x) -g(t)<0$ and $F(x)-f(t)<0$ in the inequality \eqref{help17}, and hence by rearranging it, the assertion follows.
\end{proof}

In the next result, under some additional regularity assumptions on $f$ and $g$, we derive another set of conditions that is equivalent to \eqref{ineq_Bajrak_type_1}.

\begin{Pro}\label{Pro_Baj_type_comparison_2}
Let $X$ be a nonempty set, $\Theta$ be a nondegenerate open interval of $\RR$.
Let $f,g:\Theta\to\RR$ be strictly increasing functions, $p,q:X\to\RR_{++}$, $F:X\to f(\Theta)$, $G:X\to g(\Theta)$,
 and suppose that $F(X)=f(\Theta)$ and $\conv(G(X)) \subseteq g(\Theta)$.
Let $\psi:X\times \Theta\to\RR$ and $\varphi: X\times \Theta\to\RR$ be given by \eqref{help_Baj_psi_phi}.
 Assume that $(f^{(-1)}\circ F)(x) = (g^{(-1)}\circ G)(x)=:\vartheta_1(x)$, $x\in X$, and that
 $f$ and $g$ are differentiable at $\vartheta_1(x)$ for all $x\in X$ with non-vanishing (and hence positive) derivatives.
Then $\vartheta_{n,\psi}(\bx)\leq \vartheta_{n,\varphi}(\bx)$ holds for each $n\in\NN$ and $\bx\in X^n$ if and only if
 the inequality
 \begin{align}\label{ineq_Bajrak_type_2}
    \frac{p(y)}{p(x)} \cdot \frac{F(y) - F(x)}{f'\big(f^{(-1)}(F(x))\big)}
       \leq  \frac{q(y)}{q(x)} \cdot \frac{G(y) - G(x)}{g'\big(g^{(-1)}(G(x))\big)}
 \end{align}
 is valid for all $x,y\in X$.
\end{Pro}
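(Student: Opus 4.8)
The plan is to reduce the statement to the differentiable comparison criterion of Theorem \ref{Thm_inequality_diff}, applied to the pair $\psi,\varphi$ from \eqref{help_Baj_psi_phi}, and then to compute its condition (v) explicitly in the Bajraktarevi\'c setting. First I would verify the hypotheses of Theorem \ref{Thm_inequality_diff}. Since $\vartheta_{1,\psi}=f^{(-1)}\circ F$ and $\vartheta_{1,\varphi}=g^{(-1)}\circ G$ by \eqref{help14} (taken with $n=1$), the standing assumption $(f^{(-1)}\circ F)(x)=(g^{(-1)}\circ G)(x)=:\vartheta_1(x)$ gives $\vartheta_{1,\psi}=\vartheta_{1,\varphi}=\vartheta_1$ on $X$. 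Using that the restriction of $f^{(-1)}$ to $f(\Theta)$ is the genuine inverse of $f$, together with $F(X)=f(\Theta)$, I obtain $\vartheta_1(X)=f^{(-1)}(F(X))=f^{(-1)}(f(\Theta))=\Theta$, so the crucial surjectivity hypothesis $\vartheta_1(X)=\Theta$ is in force.

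The next step is to promote the pointwise differentiability hypothesis into the global regularity needed for $\psi,\varphi\in\Psi[C,Z](X,\Theta)$. Because $f$ (respectively $g$) is assumed differentiable at $\vartheta_1(x)$ for every $x\in X$ and $\vartheta_1(X)=\Theta$, the functions $f$ and $g$ are in fact differentiable, hence continuous, at every point of $\Theta$. Continuity of $f$ and $g$ makes $t\mapsto\psi(x,t)$ and $t\mapsto\varphi(x,t)$ continuous, i.e.\ property $[C]$ holds, and it also makes $f(\Theta)$ and $g(\Theta)$ genuine open intervals. Consequently $\conv(F(X))=\conv(f(\Theta))=f(\Theta)\subseteq f(\Theta)$, while $\conv(G(X))\subseteq g(\Theta)$ holds by assumption, so Lemma \ref{Lem_Baj_psi_prop} yields that both $\psi$ and $\varphi$ possess property $[Z]$. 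Finally, from $\partial_2\psi(x,t)=-p(x)f'(t)$ and $\partial_2\varphi(x,t)=-q(x)g'(t)$ together with $f'(\vartheta_1(x)),g'(\vartheta_1(x))>0$ and $p,q>0$, the maps $t\mapsto\psi(x,t)$ and $t\mapsto\varphi(x,t)$ are differentiable at $\vartheta_1(x)$ with non-vanishing (indeed negative) derivative, so every assumption of Theorem \ref{Thm_inequality_diff} is satisfied.

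With the hypotheses checked, Theorem \ref{Thm_inequality_diff} asserts that $\vartheta_{n,\psi}(\bx)\leq\vartheta_{n,\varphi}(\bx)$ for all $n\in\NN$ and $\bx\in X^n$ (that is, assertion (i) of Theorem \ref{Lem_psi_est_eq_5}) is equivalent to its condition (v), namely the inequality \eqref{phi_est_inequality_6}. It then only remains to rewrite (v) in closed form. Here I would use $f(\vartheta_1(x))=f(f^{(-1)}(F(x)))=F(x)$, valid since $F(x)\in f(\Theta)$, to get $\psi(y,\vartheta_1(x))=p(y)(F(y)-F(x))$, and likewise $g(\vartheta_1(x))=G(x)$ to get $\varphi(y,\vartheta_1(x))=q(y)(G(y)-G(x))$. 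Substituting these, along with $\partial_2\psi(x,\vartheta_1(x))=-p(x)f'(f^{(-1)}(F(x)))$ and $\partial_2\varphi(x,\vartheta_1(x))=-q(x)g'(g^{(-1)}(G(x)))$, into \eqref{phi_est_inequality_6} makes its two sides collapse exactly into the two sides of \eqref{ineq_Bajrak_type_2}, which establishes the desired equivalence.

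I expect the only genuinely delicate point to be the bookkeeping in the second paragraph: one must observe that differentiability is hypothesized merely at the points $\vartheta_1(x)$, and that it is precisely the identity $\vartheta_1(X)=\Theta$, forced by $F(X)=f(\Theta)$, that upgrades this to differentiability on all of $\Theta$, thereby unlocking continuity, property $[Z]$ via Lemma \ref{Lem_Baj_psi_prop}, and ultimately the applicability of Theorem \ref{Thm_inequality_diff}. Once this regularity is secured, the evaluation of condition (v) is a routine substitution.
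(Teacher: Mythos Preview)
Your proposal is correct and follows essentially the same approach as the paper: verify $\vartheta_1(X)=\Theta$ via $F(X)=f(\Theta)$, compute $\partial_2\psi(x,\vartheta_1(x))$ and $\partial_2\varphi(x,\vartheta_1(x))$, apply Theorem~\ref{Thm_inequality_diff}, and then rewrite condition~(v) using $f(\vartheta_1(x))=F(x)$ and $g(\vartheta_1(x))=G(x)$. If anything, you are more careful than the paper's own proof, since you spell out why $\psi,\varphi\in\Psi[C,Z](X,\Theta)$ (using that $\vartheta_1(X)=\Theta$ upgrades the pointwise differentiability to continuity on all of $\Theta$, and then invoking Lemma~\ref{Lem_Baj_psi_prop}), whereas the paper invokes Theorem~\ref{Thm_inequality_diff} directly without explicitly recording these checks.
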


\begin{proof}
Since $F(X)=f(\Theta)$, we have that
 \begin{align}\label{help_15}
 \vartheta_1(X)=(f^{(-1)}\circ F)(X)=f^{(-1)}(F(X))=f^{(-1)}(f(\Theta))=\Theta.
 \end{align}
Further, for all $x\in X$, we have
 \begin{align*}
  &\partial_2\psi(x,\vartheta_1(x)) = -p(x) f'(\vartheta_1(x)) = -p(x) f'\big(f^{(-1)}(F(x))\big),\\
  &\partial_2\varphi(x,\vartheta_1(x)) = -q(x) g'(\vartheta_1(x)) = -q(x) g'\big(g^{(-1)}(G(x))\big).
 \end{align*}
Consequently, using Theorem \ref{Thm_inequality_diff}, we have that
 $\vartheta_{n,\psi}(\bx)\leq \vartheta_{n,\varphi}(\bx)$ holds for each $n\in\NN$ and $\bx\in X^n$ if and only if
 \begin{align*}
    \frac{p(y)}{p(x)} \cdot \frac{F(y) - f(\vartheta_1(x))}{f'(\vartheta_1(x))}
       \leq  \frac{q(y)}{q(x)} \cdot \frac{G(y) - g(\vartheta_1(x))}{g'(\vartheta_1(x))}
 \end{align*}
 is valid for all $x,y\in X$, which yields the statement, since
 \[
   f(\vartheta_1(x)) = f(f^{(-1)}(F(x))) = (f\circ f^{(-1)})(F(x))
                     = F(x),\qquad x\in X,
 \]
 due to the condition $F(X)\subseteq f(\Theta)$, and similarly, we also have $g(\vartheta_1(x)) = G(x)$, $x\in X$.
\end{proof}

In the next result, among others, we point out that, under the assumptions of Proposition \ref{Pro_Baj_type_comparison_2},
the function $g$ is continuous.

\begin{Lem}\label{Rem_psi_varphi}
Let $X$ be a nonempty set and $\Theta$ be a nondegenerate open interval of $\RR$.
Let $f,g:\Theta\to\RR$ be strictly increasing functions, $F:X\to f(\Theta)$, $G:X\to g(\Theta)$,
 and suppose that $F(X)=f(\Theta)$ and $\conv(G(X)) \subseteq g(\Theta)$.
Assume that $(f^{(-1)}\circ F)(x) = (g^{(-1)}\circ G)(x)$, $x\in X$.
Then
 \begin{enumerate}
  \item[(i)] $G(X)=g(\Theta)$ and this set is convex.
  \item[(ii)] $g$ is continuous, and $G(x) = (g\circ f^{(-1)})(F(x))$, $x\in X$.
 \end{enumerate}
\end{Lem}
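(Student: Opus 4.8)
The plan is to exploit the common value $\vartheta_1:=f^{(-1)}\circ F=g^{(-1)}\circ G$ (available by hypothesis) together with the defining relations of the generalized left inverses recalled before \eqref{help16}, namely $(f^{(-1)}\circ f)(t)=t$ for $t\in\Theta$ and $(f\circ f^{(-1)})(y)=y$ for $y\in f(\Theta)$, and likewise for $g$. First I would record, exactly as in \eqref{help_15}, that the hypothesis $F(X)=f(\Theta)$ yields
\[
  \vartheta_1(X)=(f^{(-1)}\circ F)(X)=f^{(-1)}(F(X))=f^{(-1)}(f(\Theta))=\Theta.
\]
Next, for each $x\in X$ we have $g^{(-1)}(G(x))=\vartheta_1(x)$; since $G(x)\in g(\Theta)$, the generalized left inverse $g^{(-1)}$ acts there as the genuine inverse of $g$, so applying $g$ and using $(g\circ g^{(-1)})(y)=y$ for $y\in g(\Theta)$ gives the key identity
\[
  G(x)=g(\vartheta_1(x))=(g\circ f^{(-1)})(F(x)),\qquad x\in X,
\]
which is already the displayed formula in part (ii).

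For part (i), the plan is to combine the key identity with $\vartheta_1(X)=\Theta$ to obtain $G(X)=g(\vartheta_1(X))=g(\Theta)$. The convexity is then immediate from the standing assumption $\conv(G(X))\subseteq g(\Theta)$: substituting $G(X)=g(\Theta)$ turns this into $\conv(g(\Theta))\subseteq g(\Theta)$, so $g(\Theta)$ contains its own convex hull and is therefore a (convex) interval.

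The continuity of $g$ in part (ii) is the one place that needs a genuine argument rather than bookkeeping, and I expect it to be the main obstacle. Here the plan is to invoke the standard fact that a monotone function on an interval is continuous precisely when its range is an interval, applied to the convexity established in (i). Concretely, were $g$ discontinuous at some $t_0\in\Theta$ (every point of the open interval $\Theta$ is interior), strict monotonicity would force $g(t_0-0)<g(t_0+0)$. Choosing any $c$ in the nondegenerate open interval $(g(t_0-0),g(t_0+0))$ with $c\neq g(t_0)$, and any $t_1,t_2\in\Theta$ with $t_1<t_0<t_2$, one has $g(t_1)\leq g(t_0-0)<c<g(t_0+0)\leq g(t_2)$, while $g(t)\neq c$ for every $t\in\Theta$ (as $g(t)<c$ for $t<t_0$, $g(t)>c$ for $t>t_0$, and $g(t_0)\neq c$ by choice). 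Thus $c\notin g(\Theta)$ although it lies strictly between the two range values $g(t_1)$ and $g(t_2)$, contradicting the convexity of $g(\Theta)=G(X)$ from (i). Hence $g$ is continuous, which together with the key identity completes the proof.
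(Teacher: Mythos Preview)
Your proof is correct and follows essentially the same approach as the paper's: both establish $\vartheta_1(X)=\Theta$ from $F(X)=f(\Theta)$, use $(g\circ g^{(-1)})|_{g(\Theta)}=\mathrm{id}$ to get $G(X)=g(\Theta)$ and the formula $G=(g\circ f^{(-1)})\circ F$, and then deduce continuity of $g$ from the convexity of its range. The only differences are cosmetic: you derive the identity in (ii) first and then use it for (i), and you spell out the standard ``monotone with interval range $\Rightarrow$ continuous'' argument that the paper merely invokes.
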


\begin{proof}
(i). Since $F(X)=f(\Theta)$, we have that
 \[
  (f^{(-1)}\circ F)(X)=f^{(-1)}(F(X))=f^{(-1)}(f(\Theta))=\Theta,
 \]
 which implies that $(g^{(-1)}\circ G)(X) = (f^{(-1)}\circ F)(X) = \Theta$ holds as well.
Hence, using that $G(X)\subseteq g(\Theta)$, we have that $g(\Theta) = g((g^{(-1)}\circ G)(X))=(g\circ g^{(-1)})(G(X)) = G(X)$.
Consequently, $\conv(g(\Theta)) = \conv(G(X))$, and, since $\conv(G(X)) \subseteq g(\Theta)$, we obtain that $g(\Theta)\subseteq \conv(g(\Theta)) = \conv(G(X)) \subseteq g(\Theta)$, yielding that $g(\Theta)=\conv(G(X))$ is a convex set.

(ii). Since $g$ is strictly increasing and its range $g(\Theta)$ is convex (see part (i)), we can see that $g$ is continuous as well.
Finally, note that the equality $(f^{(-1)}\circ F)(x) = (g^{(-1)}\circ G)(x)$, $x\in X$, implies that $G(x) = (g\circ f^{(-1)})(F(x))$, $x\in X$,
 since $G(X)\subseteq g(\Theta)$.
\end{proof}

Next, we solve the equality problem for Bajraktarevi\'c-type estimators.
In the proof, we will use a result on the Schwarzian derivative of a function. Given a nondegenerate open interval $I\subseteq \RR$, for a three times differentiable function $h:I \to \RR$ with a
nonvanishing first derivative, its Schwarzian derivative $S_h: I \to \RR$ is defined by
 \[
   S_h(x):=\frac{h'''(x)}{h'(x)} - \frac{3}{2} \left(\frac{h''(x)}{h'(x)}\right)^2,\qquad x\in I.
 \]
The following result is well-known, see, e.g., Gr\"unwald and P\'ales \cite[Corollary 3]{GruPal2}.

\begin{Lem}\label{Lem_Sch_deriv}
Let $I\subseteq \RR$ be a nondegenerate open interval, and $h:I\to\RR$ be a three times differentiable
 function such that $h'$ does not vanish on $I$.
Then $S_h(x)=0$, $x\in I$, holds if and only if there exist four constants $a,b,c,d\in\RR$ with $ad\ne bc$
 and $0\notin cI +d$ such that
  \[
    h(x) = \frac{ax+b}{cx+d}, \qquad x\in I.
  \]
\end{Lem}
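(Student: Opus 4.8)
The plan is to prove the two implications separately: the ``if'' direction by direct differentiation, and the substantive ``only if'' direction by reducing the third-order condition $S_h\equiv0$ to a first-order ordinary differential equation.

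For sufficiency, I would substitute $h(x)=(ax+b)/(cx+d)$ directly into the definition of $S_h$. Writing $k:=ad-bc\neq0$, one computes $h'(x)=k(cx+d)^{-2}$, $h''(x)=-2kc(cx+d)^{-3}$, and $h'''(x)=6kc^2(cx+d)^{-4}$, so that $h'''/h'=6c^2(cx+d)^{-2}$ and $\tfrac32(h''/h')^2=6c^2(cx+d)^{-2}$; these cancel and give $S_h\equiv0$. The condition $0\notin cI+d$ ensures $cx+d\neq0$ throughout $I$, so that $h$ is well defined and three times differentiable, while $k\neq0$ guarantees $h'$ is nonvanishing.

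For necessity, the key step is to introduce $p:=h''/h'$, which is well defined and differentiable on $I$ since $h$ is three times differentiable and $h'$ does not vanish. A short computation gives $p'=h'''/h'-p^2$, whence $S_h=p'-\tfrac12 p^2$. Thus $S_h\equiv0$ is equivalent to the autonomous ODE $p'=\tfrac12 p^2$ on $I$. Because the right-hand side is a locally Lipschitz (indeed $C^\infty$) function of $p$, Picard--Lindel\"of uniqueness applies, and since the constant $p\equiv0$ is a solution, $p$ either vanishes identically or is nowhere zero on $I$. In the first case $h''\equiv0$, so $h$ is affine, $h(x)=ax+b$ with $a\neq0$, which is the asserted form with $c=0$, $d=1$. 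In the second case I would rewrite the equation as $(1/p)'=-\tfrac12$ and integrate to get $p(x)=-2/(x-x_0)$ for some $x_0\in\RR$; here $x_0\notin I$ because $p$ is finite on $I$. Integrating the identity $p=(\log|h'|)'$ then yields $h'(x)=K(x-x_0)^{-2}$ with $K\neq0$ (the sign being constant because $h'$ is continuous and nonvanishing on the interval $I$), and a final integration gives $h(x)=-K/(x-x_0)+L$. Rearranging, $h(x)=(Lx-Lx_0-K)/(x-x_0)$, which is the M\"obius form with $c=1$, $d=-x_0$, and $ad-bc=K\neq0$; since $x_0\notin I$ one has $0\notin cI+d=I-x_0$.

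I expect the main obstacle to be the necessity direction, and within it the clean dichotomy ``$p\equiv0$ or $p$ nowhere vanishing,'' which is precisely where the uniqueness theorem for the ODE $p'=\tfrac12p^2$ is needed and where one must ensure that $p$ is genuinely differentiable. Once this dichotomy is in hand, the remaining two integrations and the bookkeeping needed to verify $ad\neq bc$ and $0\notin cI+d$ are routine.
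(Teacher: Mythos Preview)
Your proof is correct and complete. The sufficiency computation is accurate, and in the necessity direction your reduction to $p'=\tfrac12 p^2$ via $p:=h''/h'$, the dichotomy from ODE uniqueness, and the two subsequent integrations are all sound; the verification of $ad-bc\neq0$ and $0\notin cI+d$ is handled correctly in both cases.

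There is nothing to compare, however, because the paper does not prove this lemma at all: it is stated as a well-known fact with a citation to Gr\"unwald and P\'ales, and is used only as a black box in the proof of Theorem~\ref{Thm_Baj_type_equality}. Your argument is the standard classical one and would serve perfectly well as a self-contained proof.
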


In the proof of the subsequent theorems, the following auxiliary result plays an important role.

\begin{Lem}\label{Lem_aux}
Let $I$ be a nondegenerate open interval of $\RR$.
Let $f,g:I\to\RR$ be strictly increasing functions such that there exist four constants $a,b,c,d\in\RR$ with $0\notin c f(I)+d$ and
 \[
    g(t)=\frac{af(t)+b}{cf(t)+d},\qquad t\in I.
 \]
Then $ad>bc$ and $cf+d$ is either everywhere positive or everywhere negative on $I$.
\end{Lem}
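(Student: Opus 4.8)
The plan is to reduce everything to a single algebraic identity for the M\"obius transform $\phi(s):=\frac{as+b}{cs+d}$, defined for those $s\in\RR$ with $cs+d\ne0$. Since $0\notin cf(I)+d$, we have $cf(t)+d\ne0$ for every $t\in I$, so $\phi$ is defined on the whole range $f(I)$ and the hypothesis reads $g=\phi\circ f$ on $I$. A direct expansion gives, for all $s_1,s_2\in f(I)$,
\[
  \phi(s_2)-\phi(s_1)=\frac{(ad-bc)(s_2-s_1)}{(cs_1+d)(cs_2+d)},
\]
and this identity is the engine of the whole argument; note in particular that neither continuity nor differentiability of $f$ and $g$ is used, which is exactly what makes the proof go through even though $f$ and $g$ are merely strictly increasing.

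First I would record that $\phi$ is strictly increasing along $f(I)$: if $s_1<s_2$ lie in $f(I)$, write $s_i=f(t_i)$ with $t_i\in I$; strict monotonicity of $f$ forces $t_1<t_2$, and then strict monotonicity of $g=\phi\circ f$ gives $\phi(s_1)=g(t_1)<g(t_2)=\phi(s_2)$. Next I would prove $ad>bc$. Since $I$ is nondegenerate and $f$ is strictly increasing, $f(I)$ is an infinite set; as the sign of $cs+d$ can change at most once (at $s=-d/c$ when $c\ne0$, and never when $c=0$), at least two points $s_1<s_2$ of $f(I)$ satisfy that $cs_1+d$ and $cs_2+d$ have the same sign, so the denominator $(cs_1+d)(cs_2+d)$ is positive. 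Substituting these into the identity and using $\phi(s_2)-\phi(s_1)>0$ together with $s_2-s_1>0$ forces $ad-bc>0$.

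Finally I would establish the sign statement by contradiction. If $c=0$, then $cf+d\equiv d$ and $0\notin cf(I)+d=\{d\}$ gives $d\ne0$, so $cf+d$ is a nonzero constant and the claim is immediate. If $c\ne0$ and $cf+d$ took both signs on $I$, I could pick $s_1<s_2$ in $f(I)$ with $cs_1+d$ and $cs_2+d$ of opposite signs, whence $(cs_1+d)(cs_2+d)<0$; the identity together with the already-proved $ad-bc>0$ and $s_2-s_1>0$ would then yield $\phi(s_2)-\phi(s_1)<0$, contradicting that $\phi$ is increasing along $f(I)$. Hence $cf+d$ keeps a constant sign on $I$. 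The only mild obstacle is the bookkeeping of the sign of the denominator $(cs_1+d)(cs_2+d)$ according to whether the chosen points lie on the same side of the pole $-d/c$ or on opposite sides; everything else is dictated by the displayed identity.
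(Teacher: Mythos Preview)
Your proof is correct. Both arguments rest on the same underlying computation---that for $s_1,s_2\in f(I)$ one has $g(t_2)-g(t_1)=\dfrac{(ad-bc)(f(t_2)-f(t_1))}{(cf(t_1)+d)(cf(t_2)+d)}$---but you organise it differently from the paper. You isolate the M\"obius difference identity at the outset and first prove $ad>bc$ by a pigeonhole observation (since $f(I)$ is infinite and $cs+d$ changes sign at most once, two points on the same side of the pole exist), then deduce constant sign by a two-point contradiction. The paper proceeds in the opposite order: it first establishes constant sign via a more elaborate three-point argument (choosing $t<\tau<r<s$ around a putative sign-change point and extracting two incompatible inequalities for $ad-bc$), and only afterwards reads off $ad>bc$. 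Your route is somewhat more economical; the paper's route has the minor advantage that the constant-sign step does not rely on knowing $ad>bc$ in advance, but this buys nothing here since both conclusions are needed anyway.
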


\begin{proof} The condition $0\notin c f(I)+d$ yields that $(c,d)\neq(0,0)$. If $ad=bc$, then there exists $\lambda\in\RR$ such that $(a,b)=\lambda(c,d)$.
In this case, we get that $g(t)=\lambda$ for all $t\in I$, which contradicts the strict monotonicity of $g$. Thus $ad\neq bc$ must hold.

Next, we check that $cf+d$ is either everywhere positive or everywhere negative on $I$.
On the contrary, if $cf+d$ changes sign in $I$, then $c$ cannot be zero and hence $cf+d$ is also strictly monotone.
Therefore, using also that $I$ is a nondegenerate open interval, there exists a unique point $\tau\in I$ such that $cf(t)+d>\!(<)\,0$  for all $t<\tau$, $t\in I$, and $cf(t)+d<\!(>)\,0$ for all $t>\tau$, $t\in I$. Let $t<\tau<r<s$ be arbitrarily fixed elements of $I$. Then $(cf(t)+d)(cf(r)+d)<0$ and $(cf(r)+d)(cf(s)+d)>0$. Consequently, using the strict increasingness of $g$, the inequalities
 \[
   \frac{af(t)+b}{cf(t)+d}=g(t)
   <g(r)=\frac{af(r)+b}{cf(r)+d}
   \qquad\mbox{and}\qquad
   \frac{af(r)+b}{cf(r)+d}=g(r)
   <g(s)=\frac{af(s)+b}{cf(s)+d}
 \]
 imply that
 \[
  (af(t)+b)(cf(r)+d)>(af(r)+b)(cf(t)+d),\qquad
  (af(r)+b)(cf(s)+d)<(af(s)+b)(cf(r)+d),
 \]
 or equivalently,
 \begin{align}\label{help_25}
   0>(ad-bc)(f(r)-f(t))
   \qquad\mbox{and}\qquad
   0<(ad-bc)(f(s)-f(r)).
 \end{align}
Since $f$ is also strictly increasing, we have that $f(t)<f(r)<f(s)$.
Therefore, $(ad-bc)(f(r)-f(t))$ and $(ad-bc)(f(s)-f(r))$ should have the same signs.
This together with the inequalities \eqref{help_25} lead us to a contradiction.

Finally, to show that $ad>bc$, let $r,s\in I$ with $r<s$ be fixed.
Then, using that $cf+d$ does not change sign in $I$, we have $(cf(r)+d)(cf(s)+d)>0$,
 and therefore the inequality $g(r)<g(s)$, in the same way as above, implies $0<(ad-bc)(f(s)-f(r))$.
This, in view of the strict increasingness of $f$ yields that $ad-bc>0$.
\end{proof}

\begin{Thm}\label{Thm_Baj_type_equality}
Let $X$ be a nonempty set and $\Theta$ be a nondegenerate open interval of $\RR$.
Let $f,g:\Theta\to\RR$ be strictly increasing functions such that $f$ is continuous,
 $p,q:X\to\RR_{++}$, $F:X\to f(\Theta)$, $G:X\to g(\Theta)$, and suppose that $F(X)=f(\Theta)$ and $\conv(G(X)) \subseteq g(\Theta)$.
Let $\psi:X\times \Theta\to\RR$ and $\varphi: X\times \Theta\to\RR$ be given by \eqref{help_Baj_psi_phi}.
If $\vartheta_{n,\psi}(\bx) = \vartheta_{n,\varphi}(\bx)$ holds for each $n\in\NN$ and $\bx\in X^n$,
 then there exist four constants $a,b,c,d\in\RR$ with $ad\ne bc$ and $0\notin c f(\Theta)+d$ such that
 \begin{align}\label{gGq}
    g(t)=\frac{af(t)+b}{cf(t)+d},\,\,\, t\in\Theta\;\;
    \mbox{and}\;\;
    G(x)=\frac{aF(x)+b}{cF(x)+d},\;\;\;
    q(x)=(cF(x)+d)p(x),\,\,\, x\in X.
 \end{align}
\end{Thm}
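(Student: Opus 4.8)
The plan is to reduce the assumed equality to a single multiplicative functional equation relating $\psi$ and $\varphi$, to extract from it a cross-ratio identity in which the unknown weights disappear, and thereby to recognize $g\circ f^{(-1)}$ as a M\"obius transformation. First I would evaluate $\vartheta_{n,\psi}=\vartheta_{n,\varphi}$ at $n=1$ to get $(f^{(-1)}\circ F)(x)=(g^{(-1)}\circ G)(x)$ for all $x\in X$, i.e.\ $\vartheta_{1,\psi}=\vartheta_{1,\varphi}$. Then Lemma \ref{Rem_psi_varphi} applies and gives that $g$ is continuous, $G(X)=g(\Theta)$, and $G(x)=(g\circ f^{(-1)})(F(x))$; writing $h:=g\circ f^{(-1)}$ on $f(\Theta)$ (where $f^{(-1)}$ is the genuine inverse of $f$, since $f$ is continuous and $F(X)=f(\Theta)$), this reads $G=h\circ F$. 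Moreover $\psi\in\Psi[T,Z_1](X,\Theta)$ by Proposition 2.19 of \cite{BarPal2} and Lemma \ref{Lem_Baj_psi_prop} (using $F(X)=f(\Theta)$), while $\varphi\in\Psi[Z](X,\Theta)$ by Lemma \ref{Lem_Baj_psi_prop} (using $\conv(G(X))\subseteq g(\Theta)$), so Theorem \ref{Thm_equality} is applicable. Since $\vartheta_{1,\psi}(X)=f^{(-1)}(F(X))=\Theta$, we have $\Theta_\varphi=\Theta_\psi=\Theta$, and part (iii) of Theorem \ref{Thm_equality} furnishes a positive $P:\Theta\to(0,\infty)$ with $p(z)(F(z)-f(t))=P(t)\,q(z)(G(z)-g(t))$ for all $z\in X$, $t\in\Theta$.

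Next I would use $F(X)=f(\Theta)$ to make this an equation purely in $f$ and $g$. For each $s\in\Theta$ choose $z(s)\in X$ with $F(z(s))=f(s)$; then $G(z(s))=h(f(s))=g(s)$, and substituting $z=z(s)$ gives $\frac{f(s)-f(t)}{g(s)-g(t)}=P(t)\,\rho(s)$ for $s\ne t$, where $\rho(s):=q(z(s))/p(z(s))>0$ (independent of the representative, as two representatives compared at a common $t\ne s$ force equal ratios). The left-hand side is symmetric in $s,t$, so $P(t)\rho(s)=P(s)\rho(t)$; hence $P/\rho\equiv\lambda$ is a positive constant and, with $\mu:=\rho$,
\[
  f(s)-f(t)=\lambda\,\mu(s)\mu(t)\,(g(s)-g(t)),\qquad s,t\in\Theta.
\]

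The key step is that $\mu$ cancels in the cross-ratio. For four distinct $t_1,t_2,t_3,t_4\in\Theta$, inserting the displayed identity into $\frac{(f(t_1)-f(t_3))(f(t_2)-f(t_4))}{(f(t_1)-f(t_4))(f(t_2)-f(t_3))}$ makes every factor $\lambda\mu(t_i)\mu(t_j)$ cancel, leaving the corresponding cross-ratio of the $g(t_i)$. Thus the strictly increasing (hence injective) map $h$ preserves the cross-ratio of every four points of the nondegenerate interval $f(\Theta)$. Fixing any three of them, the unique M\"obius transformation agreeing with $h$ there must agree with $h$ at every fourth point, since the cross-ratio is injective in its fourth argument; hence $h(u)=\frac{au+b}{cu+d}$ on $f(\Theta)$ for some constants $a,b,c,d$. (This is the non-smooth counterpart of Lemma \ref{Lem_Sch_deriv}, which identifies the maps of vanishing Schwarzian with the M\"obius transformations.) Finiteness of $h$ on $f(\Theta)$ forces $0\notin cf(\Theta)+d$, and Lemma \ref{Lem_aux} (applied with $I=\Theta$ to the strictly increasing $f,g$) yields $ad>bc$, in particular $ad\ne bc$, together with a constant sign of $cf+d$ on $\Theta$. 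Since $g=h\circ f$ and $G=h\circ F$, the first two relations in \eqref{gGq} follow.

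Finally, I would recover the weight relation by substituting the M\"obius form into the equation of Theorem \ref{Thm_equality}. A direct expansion gives $G(x)-g(t)=\frac{(ad-bc)(F(x)-f(t))}{(cF(x)+d)(cf(t)+d)}$, so for $t$ with $f(t)\ne F(x)$ the factor $F(x)-f(t)$ cancels and $q(x)=p(x)(cF(x)+d)\cdot\frac{cf(t)+d}{P(t)(ad-bc)}$; the trailing factor is independent of $t$, hence a constant, and rescaling $(a,b,c,d)$ by a common nonzero scalar (which changes neither $h$, nor $g$, nor $G$) normalizes it to $1$, giving $q(x)=(cF(x)+d)p(x)$. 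The constant sign of $cf+d$ from Lemma \ref{Lem_aux}, together with $p,q>0$ and $F(X)=f(\Theta)$, is compatible only with $cf+d>0$, so the normalization is consistent. The main obstacle is precisely the middle step: deducing the M\"obius form with no differentiability of $f$ or $g$ at hand. The cross-ratio cancellation of the auxiliary factor $\mu$ is exactly what overcomes this, reducing the matter to the classical fact (whose differentiable incarnation is Lemma \ref{Lem_Sch_deriv}) that a cross-ratio--preserving injection of an interval is a M\"obius transformation.
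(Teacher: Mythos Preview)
Your proof is correct and takes a genuinely different route from the paper's. Both proofs begin identically (deducing $\vartheta_{1,\psi}=\vartheta_{1,\varphi}$, invoking Lemma~\ref{Rem_psi_varphi}, identifying $\Theta_\psi=\Theta_\varphi=\Theta$), but then diverge at the key step of showing that $h:=g\circ f^{(-1)}$ is a M\"obius transformation on $f(\Theta)$. The paper applies Proposition~\ref{Pro_Baj_type_comparison} twice to obtain, for each pair $x,y\in X$ with $G(x)<G(y)$, a \emph{local} M\"obius representation $h(s)=\frac{a_{x,y}s+b_{x,y}}{c_{x,y}s+d_{x,y}}$ valid on the subinterval $f(\Theta_{x,y})$; it then observes that this forces $h$ to be three times differentiable with vanishing Schwarzian derivative on each such subinterval, and invokes Lemma~\ref{Lem_Sch_deriv} to conclude that $h$ is a single M\"obius map globally on $f(\Theta)$. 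You instead appeal to Theorem~\ref{Thm_equality} to obtain the multiplicative factorization $\psi=P\cdot\varphi$, pull it back through representatives $z(s)$ to the symmetric identity $f(s)-f(t)=\lambda\mu(s)\mu(t)(g(s)-g(t))$, and then observe that the auxiliary factor $\mu$ cancels in every cross-ratio, so that $h$ preserves the cross-ratio of any four points of $f(\Theta)$ and is therefore M\"obius by the elementary three-point determination argument.

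Your approach is more elementary in that it avoids the Schwarzian derivative entirely and never needs to establish any differentiability of $h$; the cross-ratio argument is purely algebraic. The paper's route, on the other hand, is more self-contained within the framework it has built (it reuses Proposition~\ref{Pro_Baj_type_comparison} rather than reaching back to the abstract Theorem~\ref{Thm_equality}), and Lemma~\ref{Lem_Sch_deriv} is a citable black box. The recovery of the weight relation $q=(cF+d)p$ is handled the same way in both proofs, by substituting the M\"obius form back and rescaling. One minor remark: your selection of $z(s)$ tacitly uses choice, but as you note, the ratio $q(z)/p(z)$ is independent of the representative, so $\rho$ is well-defined and the argument could be phrased choice-free.
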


\begin{proof}
Since $f$ is strictly increasing and continuous and $\Theta$ is a non-degenerate open interval,
 we have $f(\Theta)$ is a non-degenerate open interval.
Hence $\conv(f(\Theta)) = f(\Theta) = F(X)$, and then, as a consequence of Lemma \ref{Lem_Baj_psi_prop},
 we get $\psi\in \Psi[Z](X,\Theta)$.
Further, since $\conv(G(X))\subseteq g(\Theta)$, Lemma \ref{Lem_Baj_psi_prop} also yields that $\varphi\in \Psi[Z](X,\Theta)$.

We first verify that $\Theta_\psi=\Theta$. The inclusion $\Theta_\psi\subseteq\Theta$ is trivial.
To prove the reversed one, let $t\in\Theta$ be arbitrary. Now choose $r,s\in\Theta$ such that $r<t<s$.
Using that $F(X)=f(\Theta)$, we can find $x,y\in X$ such that $f(r)=F(x)$ and $f(s)=F(y)$.
Since $f$ is strictly increasing, it follows that $F(x)=f(r)<f(t)<f(s)=F(y)$,
 showing that $t$ belongs to the set
 \[
   \big\{t\in\Theta\mid \exists\, x,y\in X: F(x)<f(t)<F(y)\big\},
 \]
 which, according to \eqref{help_23A}, equals $\Theta_\psi$.

Assume that  $\vartheta_{n,\psi}(\bx) = \vartheta_{n,\varphi}(\bx)$ holds for each $n\in\NN$ and $\bx\in X^n$.
Then, in the case $n=1$, this equality and \eqref{help14} imply that $(f^{(-1)}\circ F)(x) = \vartheta_{1,\psi}(x)
 =\vartheta_{1,\varphi}(x) = (g^{(-1)}\circ G)(x)$, $x\in X$.
Hence, according to Lemma \ref{Rem_psi_varphi}, we get that $G(X)=\conv(G(X))=g(\Theta)$ is a convex set and $g$ is continuous.
Then, similarly as we derived $\Theta_\psi = \Theta$, we have that $\Theta_\varphi = \Theta$ holds as well.

For all $x,y\in X$ with $G(x)<G(y)$, let us introduce the notation
 \[
    \Theta_{x,y}:=\{t\in\Theta\mid G(x)<g(t)<G(y)\}.
 \]
Using that $F(X)\subseteq f(\Theta)$, $G(X)\subseteq g(\Theta)$, and that the restrictions of $f^{(-1)}$ and $g^{(-1)}$ to
 $f(\Theta)$ and $g(\Theta)$ are the strictly increasing inverses of $f$ and $g$ in the standard sense, respectively,
 for all $x,y\in X$ with $G(x)<G(y)$, we have
 \begin{align}\label{help_17}
   \begin{split}
    \Theta_{x,y}& =\{t\in\Theta\mid g^{(-1)}(G(x))<t<g^{(-1)}(G(y))\}\\
                & = \{t\in\Theta\mid f^{(-1)}(F(x))<t<f^{(-1)}(F(y))\} \\
                & = \{t\in\Theta\mid F(x)<f(t)<F(y)\}.
   \end{split}
 \end{align}
The previous argument also shows that for all $x,y\in X$, we get $G(x)<G(y)$ if and only if $F(x)<F(y)$,
 and the set $\Theta_{x,y}$ is a nonempty open interval for all $x,y\in X$ with $G(x)<G(y)$,
 since it is the intersection of the open intervals $(g^{(-1)}(G(x)), g^{(-1)}(G(y)))$ and $\Theta$.

In view of \eqref{help_17}, Lemma \ref{Lem_Baj_psi_prop} and the equality $\Theta_\psi=\Theta$ we can see that
\Eq{help_16.5}{
   \Theta_\varphi
     &= \bigcup_{\{ x,y\in X: G(x)<G(y)\} }\Theta_{x,y} \\
     &=\bigcup_{\{ x,y\in X: F(x)<F(y)\} }
        \{t\in\Theta\mid F(x)<f(t)<F(y)\}
      =\Theta_\psi=\Theta.
}
Using that the image under $f$ of a union of subsets is the union of the images under $f$ of the given subsets, \eqref{help_16.5} immediately yields that
\begin{align}\label{help_16}
    \bigcup_{\{ x,y\in X: G(x)<G(y)\} }f(\Theta_{x,y}) = f(\Theta),
\end{align}
which is an open interval, since $\Theta$ is a nonempty open interval and $f$ is strictly increasing and continuous.

Using Proposition \ref{Pro_Baj_type_comparison}, we have that
\begin{align}\label{help_14}
    \frac{q(y)}{q(x)}\cdot \frac{G(y) - g(t)}{G(x) - g(t)}
       = \frac{p(y)}{p(x)}\cdot \frac{F(y) - f(t)}{F(x) - f(t)}
\end{align}
holds for all $t\in\Theta$ and for all $x,y\in X$ with $G(x)<g(t)<G(y)$, or equivalently, \eqref{help_14} holds for all $x,y\in X$ with $G(x)<G(y)$ and for all $t\in\Theta_{x,y}$.

One can readily check that for all $x,y\in X$ with $G(x)<G(y)$ and for all $t\in\Theta_{x,y}$,
the equality \eqref{help_14} is equivalent to any of the following three equalities:
 \begin{align}\label{help18}
  \begin{split}
   &p(x)(F(x) - f(t)) q(y)(G(y) - g(t)) =p(y)(F(y) - f(t)) q(x)(G(x) - g(t)),\\[1mm]
   &\Big(p(y)(F(y) - f(t)) q(x)-p(x)(F(x) - f(t)) q(y)\Big)g(t)\\
   &\qquad  =p(y)(F(y) - f(t)) q(x)G(x)-p(x)(F(x) - f(t)) q(y)G(y),\\[1mm]
   &(c_{x,y}f(t)+d_{x,y})g(t)=a_{x,y}f(t)+b_{x,y},
  \end{split}
 \end{align}
where
 \begin{align*}
  a_{x,y}&:=p(x)q(y)G(y)-p(y)q(x)G(x), &\qquad b_{x,y}&:= p(y)q(x)F(y)G(x)-p(x)q(y)F(x)G(y),\\
  c_{x,y}&:=p(x)q(y)-p(y)q(x),  &\qquad  d_{x,y}&:= p(y)q(x)F(y)-p(x)q(y)F(x).
 \end{align*}
Here, due to $G(x)\neq G(y)$, we have that $(a_{x,y},c_{x,y})\neq(0,0)$ and $(b_{x,y},d_{x,y})\neq (0,0)$ hold.
Substituting $s:=f(t)$ (i.e., $t=f^{(-1)}(s)$) in the third equality in \eqref{help18}, it follows that
 \begin{align}\label{help_24}
     (c_{x,y}s+d_{x,y})(g\circ f^{(-1)})(s)=a_{x,y}s+b_{x,y}
 \end{align}
for all $x,y\in X$ with $G(x)<G(y)$ and for all $s\in f(\Theta_{x,y})$.

Next, we check that $c_{x,y}s+d_{x,y}\ne 0$ for all $x,y\in X$ with $G(x)<G(y)$ and for all $s\in f(\Theta_{x,y})$.
If $c_{x,y}s+d_{x,y}=0$ and $c_{x,y}=0$ were true, then $d_{x,y}=0$, $a_{x,y}\ne 0$, $b_{x,y}\ne 0$ and $a_{x,y}s+b_{x,y}=0$ (following from \eqref{help_24}).
This leads us to a contradiction, since $c_{x,y}=d_{x,y}=0$ implies that $p(x)q(y) = p(y)q(x)$ and $F(x)=F(y)$, which cannot happen due to $F(x)<F(y)$.
If $c_{x,y}s+d_{x,y}=0$ and $c_{x,y}\ne 0$ were true, then $s=-\frac{d_{x,y}}{c_{x,y}}$ and $a_{x,y}s+b_{x,y}=0$, yielding that $c_{x,y}b_{x,y}-d_{x,y}a_{x,y}=0$.
This leads us to a contradiction, since an easy calculation shows that
 \begin{align}\label{help_19}
 c_{x,y}b_{x,y}-d_{x,y}a_{x,y}
   = p(x)p(y)q(x)q(y)(F(x)-F(y))(G(y)-G(x)),
 \end{align}
 which cannot be $0$ for any $x,y\in X$ with $G(x)<G(y)$.

Consequently,
 \begin{align}\label{help_18}
     (g\circ f^{(-1)})(s)=\frac{a_{x,y}s+b_{x,y}}{c_{x,y}s+d_{x,y}}
 \end{align}
 for all $x,y\in X$ with $G(x)<G(y)$ and for all $s\in f(\Theta_{x,y})$.

We can apply Lemma \ref{Lem_Sch_deriv} to the function $h:=g\circ f^{(-1)}$ defined on the nondegenerate open interval $I:=f(\Theta)$, since \eqref{help_18} implies that $h$ is three times
differentiable on $I$ such that $h'$ does not vanish on $I$.
Indeed, using \eqref{help_18}, we have that
 \[
   h'(s) = \frac{a_{x,y}d_{x,y} - b_{x,y}c_{x,y}}{(c_{x,y}s+d_{x,y})^2} \ne 0, \qquad s\in f(\Theta_{x,y})
 \]
 for all $x,y\in X$ with $G(x)<G(y)$, where we used \eqref{help_19}.
Hence, as a consequence of \eqref{help_16}, we have $h'(s)\ne 0$, $s\in f(\Theta)$.
Taking into account that $f(\Theta_{x,y})$ is a nondegenerate open interval for all $x,y\in X$ with $G(x)<G(y)$,
 Lemma \ref{Lem_Sch_deriv} and \eqref{help_18} imply that $S_h(s)=0$, $s\in f(\Theta)$.
Consequently, using again Lemma \ref{Lem_Sch_deriv}, there exist four constants $a^*,b^*,c^*,d^*\in\RR$ with $a^*d^*\ne b^*c^*$
 and $0\notin c^*f(\Theta) + d^*$ such that
 \begin{align}\label{help_20}
    h(s) = (g\circ f^{(-1)})(s) = \frac{a^*s+b^*}{c^*s+d^*}, \qquad s\in f(\Theta).
 \end{align}
By substituting $s:=f(t)$, where $t\in\Theta$, it follows that
 \begin{align}\label{eq_g_form}
  g(t) = \frac{a^*f(t)+b^*}{c^*f(t)+d^*}, \qquad t\in \Theta,
 \end{align}
 as desired.
Using \eqref{help_20}, the assumptions $f^{(-1)}\circ F=g^{(-1)}\circ G$ and $G(X)\subseteq g(\Theta)$, we get that
 \begin{align}\label{eq_G_form}
   G(x)=g( (f^{(-1)}\circ F)(x) ) = (g\circ f^{(-1)})(F(x))=\frac{a^*F(x)+b^*}{c^*F(x)+d^*},
   \qquad x\in X,
 \end{align}
 where $0\notin c^*F(X) +d^*$, since $F(X)=f(\Theta)$ and $0\notin c^* f(\Theta) + d^*$.

By \eqref{help_14} and taking into account the forms of $G$ and $g$, we get
 \begin{align*}
    \frac{q(y)}{q(x)} \cdot \frac{ \dfrac{a^*F(y) + b^*}{c^*F(y) + d^*} - \dfrac{a^*f(t) + b^*}{c^*f(t) + d^*} }
                                 {\dfrac{a^*F(x) + b^*}{c^*F(x) + d^*} - \dfrac{a^*f(t) + b^*}{c^*f(t) + d^*}}
                    =  \frac{p(y)}{p(x)} \cdot \frac{F(y) - f(t)}{F(x) - f(t)}
 \end{align*}
 holds for all $x,y\in X$ with $G(x)<G(y)$ and for all $t\in \Theta_{x,y}$.
Using that $a^*d^*-b^*c^*\ne0$, after some algebraic calculations, we obtain that
 \[
    \frac{q(y)}{p(y)} = \frac{c^*F(y)+d^*}{c^*F(x)+d^*} \cdot \frac{q(x)}{p(x)}
 \]
 holds for all $x,y\in X$ with $G(x)<G(y)$, or equivalently,
 \[
     \frac{\Big(\dfrac{q}{p}\Big)(y)}{c^*F(y)+d^*}  =  \frac{\Big(\dfrac{q}{p}\Big)(x)}{c^*F(x)+d^*}
 \]
 holds for all $x,y\in X$ with $G(x)<G(y)$.
Since $q/p$ is positive, it follows that there exists a constant $k\in\RR\setminus\{0\}$ such that
 \[
   q(x) = k(c^*F(x) + d^*)p(x), \qquad x\in X.
 \]

The statement of the proposition now holds with the choices $a:=ka^*$, $b:=kb^*$, $c:=kc^*$ and $d:=kd^*$.
\end{proof}

We note that in the proof of Theorem \ref{Thm_Baj_type_equality}, the assumption that $f$ is continuous is used
 for deriving that $f(\Theta)$ is an open interval, which is essential when we apply Lemma \ref{Lem_Sch_deriv}.
Note also that in the proof of Theorem \ref{Thm_Baj_type_equality} it turned out that $g$ is continuous as well, however, we did not utilize this property in the proof.
Nonetheless, the continuity of $g$ also follows from the result itself, since $f$ is continuous and $g(t)=(af(t)+b)/(cf(t)+d)$, $t\in\Theta$.

Next, we will provide a set of sufficient conditions on $f$, $g$, $F$ and $G$ in order that $\vartheta_{n,\psi}(\bx) = \vartheta_{n,\varphi}(\bx)$ hold for each $n\in\NN$ and $\bx\in X^n$.

\begin{Thm}\label{Thm_Baj_type_equality_2}
Let $X$ be a nonempty set and $\Theta$ be a nondegenerate open interval of $\RR$.
Let $f,g:\Theta\to\RR$ be strictly increasing functions, $p,q:X\to\RR_{++}$, $F:X\to \conv(f(\Theta))$, and $G:X\to\conv(g(\Theta))$.
Let $\psi:X\times \Theta\to\RR$ and $\varphi: X\times \Theta\to\RR$ be given by \eqref{help_Baj_psi_phi}.
If there exist four constants $a,b,c,d\in\RR$ with $0\notin c f(\Theta)+d$ such that \eqref{gGq} holds, then $\vartheta_{n,\psi}(\bx) = \vartheta_{n,\varphi}(\bx)$ holds for each $n\in\NN$ and $\bx\in X^n$.
\end{Thm}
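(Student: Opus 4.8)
The plan is to show that $\psi$ and $\varphi$ coincide up to a positive, purely $t$-dependent factor, and then to conclude that their partial sums have the same point of sign change. First I would record the structural facts I shall use. Since $F:X\to\conv(f(\Theta))$ and $G:X\to\conv(g(\Theta))$, Proposition 2.19 in Barczy and P\'ales \cite{BarPal2} applies to both $\psi$ and $\varphi$, so each is a $T$-function; consequently, for every $n\in\NN$ and $\bx=(x_1,\dots,x_n)\in X^n$ the values $\vartheta_{n,\psi}(\bx)$ and $\vartheta_{n,\varphi}(\bx)$ are exactly the (unique) points of sign change of $t\mapsto\sum_{i=1}^n\psi(x_i,t)$ and $t\mapsto\sum_{i=1}^n\varphi(x_i,t)$, respectively. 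I would also note that $q(x)=(cF(x)+d)p(x)$ together with $p(x),q(x)>0$ forces $cF(x)+d>0$, so the formula for $G$ in \eqref{gGq} is well posed.

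The key computation is a pointwise proportionality. Fixing $x\in X$ and $t\in\Theta$ and substituting the forms of $G$, $q$ and $g$ from \eqref{gGq} into $\varphi(x,t)=q(x)(G(x)-g(t))$, I would write $G(x)-g(t)$ over the common denominator $(cF(x)+d)(cf(t)+d)$; the cross terms cancel and the numerator collapses to $(ad-bc)(F(x)-f(t))$. Using $q(x)=(cF(x)+d)p(x)$, the factor $cF(x)+d$ then cancels, yielding
\Eq{*}{
  \varphi(x,t)=\frac{ad-bc}{cf(t)+d}\,p(x)(F(x)-f(t))=r(t)\,\psi(x,t),
}
where $r(t):=(ad-bc)/(cf(t)+d)$ is well defined on $\Theta$ because $0\notin cf(\Theta)+d$. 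This step is entirely routine algebra.

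The main point that requires care is the strict positivity of $r$ on $\Theta$, i.e.\ pinning down the sign of $cf+d$. By Lemma \ref{Lem_aux} (applied with $I=\Theta$) we have $ad>bc$ and $cf+d$ is of one constant sign on $\Theta$, so $r$ is of constant sign; the delicate part is ruling out $r<0$. I would do this via the orientation of the sign change: for a fixed $x$, openness of $\Theta$ gives $t_1<\vartheta_{1,\psi}(x)<t_2$ with $\psi(x,t_1)>0>\psi(x,t_2)$, and if $r$ were everywhere negative, the identity $\varphi=r\psi$ would give $\varphi(x,t_1)<0<\varphi(x,t_2)$, i.e.\ $\varphi(x,\cdot)$ would change sign from negative to positive, contradicting that $\varphi$ is a $T_1$-function. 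Hence $r(t)>0$ for all $t\in\Theta$. Finally, for every $n\in\NN$ and $\bx\in X^n$, summing the proportionality gives $\sum_{i=1}^n\varphi(x_i,t)=r(t)\sum_{i=1}^n\psi(x_i,t)$ with $r(t)>0$, so the two sums have identical sign at every $t\in\Theta$ and thus the same point of sign change, whence $\vartheta_{n,\psi}(\bx)=\vartheta_{n,\varphi}(\bx)$, as claimed.
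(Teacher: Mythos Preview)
Your proof is correct and follows essentially the same route as the paper: compute the pointwise identity $\varphi(x,t)=\dfrac{ad-bc}{cf(t)+d}\,\psi(x,t)$, invoke Lemma~\ref{Lem_aux} to get $ad>bc$ and constant sign of $cf+d$, rule out the negative sign, and conclude equality of sign-change points. The only difference is in how you exclude $r<0$: the paper argues that $cf+d<0$ on $\Theta$ would force $cF+d<0$ on $X$ via the containment $F(X)\subseteq\conv(f(\Theta))$, contradicting $q=(cF+d)p>0$; you instead use that a negative $r$ would flip the orientation of the sign change of $\varphi(x,\cdot)$, contradicting the $T_1$-property of $\varphi$. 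Both arguments are short and valid; yours has the minor advantage of not touching convex hulls, while the paper's avoids re-invoking the $T$-property of $\varphi$.
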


\begin{proof}
Since $p$ and $q$ are strictly positive functions, as a consequence of the equality $q=(cF+d)p$, we get that $cF+d$ is positive on $X$.
Further, for all $x\in X$ and $t\in \Theta$, we obtain
 \begin{align}\label{help_Thm_Baj_type_equality_2}
  \begin{split}
 \varphi(x,t)&=q(x)(G(x)-g(t))
   =(cF(x)+d)p(x)\bigg(\frac{aF(x)+b}{cF(x)+d}-\frac{af(t)+b}{cf(t)+d}\bigg)\\
  &=p(x)\frac{(ad-bc)(F(x)-f(t))}{cf(t)+d}
   =\frac{ad-bc}{cf(t)+d}\psi(x,t).
 \end{split}
 \end{align}

Using Lemma \ref{Lem_aux}, we have that $ad>bc$, and $cf+d$ is either everywhere positive or everywhere negative on $\Theta$.
We show that the latter property cannot hold.
To the contrary, assume that $cf+d$ is everywhere negative on $\Theta$, i.e., $cf(\Theta)+d\subseteq\RR_{--}$.
Then $c\cdot\conv(f(\Theta))+d=\conv(cf(\Theta)+d)\subseteq\RR_{--}$. Using that $F(X)\subseteq \conv(f(\Theta))$, this implies that
 \Eq{*}{
  c\cdot F(X)+d\subseteq c\cdot\conv(f(\Theta))+d\subseteq\RR_{--},
 }
 which contradicts the positivity of $cF+d$ on $X$.
Consequently, $cf+d$ must be everywhere positive on $\Theta$.

To prove the equality $\vartheta_{n,\psi} = \vartheta_{n,\varphi}$ on $X^n$, let $n\in\NN$ and $\bx=(x_1,\dots,x_n)\in X^n$ be arbitrary.
Then, by \eqref{help_Thm_Baj_type_equality_2}, we have
\Eq{*}{
  \sum_{i=1}^n\varphi(x_i,t)
  =\frac{ad-bc}{cf(t)+d}\sum_{i=1}^n\psi(x_i,t),\qquad t\in\Theta.
}
Since $(ad-bc)/(cf+d)$ is positive everywhere on $\Theta$.
This implies that
\Eq{*}{
  \sign\bigg(\sum_{i=1}^n\varphi(x_i,t)\bigg)
  =\sign\bigg(\sum_{i=1}^n\psi(x_i,t)\bigg),
  \qquad t\in\Theta.
}
Hence the unique points of sign change of the functions
 \[
  \Theta\ni t\mapsto \sum_{i=1}^n\varphi(x_i,t)
   \qquad \text{and} \qquad
   \Theta\ni t\mapsto \sum_{i=1}^n\psi(x_i,t)
 \]
 are equal to each other, which implies the equality $\vartheta_{n,\psi}(\bx) = \vartheta_{n,\varphi}(\bx)$, as desired.
\end{proof}

Next, we give an equivalent form of the first equality in \eqref{gGq}.
Roughly speaking, we derive a necessary and sufficient condition in order that two strictly increasing functions defined
 on a nondegenerate open interval be the M\"obius transforms of each other.

\begin{Pro}\label{Pro_Mobius}
Let $I$ be a nondegenerate open interval of $\RR$.
Let $f,g:I\to\RR$ be strictly increasing functions.
The following two statements are equivalent:
 \begin{itemize}
   \item[(i)] There exist four constants $a,b,c,d\in\RR$ with $0\notin c f(I)+d$ and
         \begin{align}\label{help_27}
           g(t)=\frac{af(t)+b}{cf(t)+d},\qquad t\in I.
         \end{align}
   \item[(ii)] For all $t_1,t_2,t_3,t_4\in I$, we have
          \Eq{help_28}{
             \begin{vmatrix}
                1 & 1 & 1 & 1\\
                f(t_1) & f(t_2) & f(t_3) & f(t_4) \\
                g(t_1) & g(t_2) & g(t_3) & g(t_4) \\
                f(t_1)g(t_1) & f(t_2)g(t_2) & f(t_3)g(t_3) & f(t_4)g(t_4) \\
             \end{vmatrix}=0.}
 \end{itemize}
\end{Pro}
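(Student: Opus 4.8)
The plan is to recast condition (ii) as the statement that the four functions $1$, $f$, $g$, and $fg$ are linearly dependent over $\RR$ on $I$, and then to recognize such a dependence as nothing but a rearrangement of the M\"obius relation \eqref{help_27}. The bridge between (ii) and linear dependence is the classical collocation criterion: for functions $\phi_1,\dots,\phi_n\colon I\to\RR$, the determinant $\det(\phi_j(t_i))_{i,j=1}^n$ vanishes for \emph{every} choice $t_1,\dots,t_n\in I$ if and only if $\phi_1,\dots,\phi_n$ are linearly dependent over $\RR$. One direction is immediate, since a nontrivial relation $\sum_k\lambda_k\phi_k\equiv 0$ makes the corresponding rows linearly dependent and hence the determinant zero; the converse follows by a short induction on $n$. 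If $\phi_1,\dots,\phi_{n-1}$ are independent one fixes $t_1,\dots,t_{n-1}$ realizing a nonzero $(n-1)\times(n-1)$ minor, and then $t\mapsto\det(\phi_j(t_i))$ with $t_n:=t$, expanded along the column indexed by $t$, is a linear combination of $\phi_1,\dots,\phi_n$ whose coefficient of $\phi_n$ is precisely that nonzero minor; independence of $\phi_1,\dots,\phi_n$ then supplies a point $t_n$ at which the determinant is nonzero. Applying this with $\phi_1:=1$, $\phi_2:=f$, $\phi_3:=g$, $\phi_4:=fg$ turns (ii) into the assertion that there exist $\lambda_1,\lambda_2,\lambda_3,\lambda_4\in\RR$, not all zero, with
\[
  \lambda_1+\lambda_2 f(t)+\lambda_3 g(t)+\lambda_4 f(t)g(t)=0,\qquad t\in I.
\]

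For the implication (i)$\Rightarrow$(ii) I would simply clear the denominator in \eqref{help_27}: the identity $g(t)(cf(t)+d)=af(t)+b$ rearranges to $c\,f(t)g(t)+d\,g(t)-a\,f(t)-b=0$, a linear dependence of $1,f,g,fg$ with coefficient vector $(-b,-a,d,c)$. This vector is nontrivial because the hypothesis $0\notin cf(I)+d$ forces $(c,d)\ne(0,0)$. By the easy half of the collocation criterion, (ii) follows at once.

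The substance lies in (ii)$\Rightarrow$(i). Starting from the nontrivial relation above, I would set $c:=\lambda_4$, $d:=\lambda_3$, $a:=-\lambda_2$, $b:=-\lambda_1$, so that $g(t)(cf(t)+d)=af(t)+b$ on $I$, and then split into two cases. If $c=0$, then $d\,g=af+b$; here $d\ne 0$, since otherwise $af+b\equiv 0$ with $f$ strictly increasing would force $a=b=0$ and leave all four coefficients zero. Thus $g=(af+b)/d$ is of the required form with a constant, hence everywhere nonzero, denominator. If $c\ne 0$, the only danger is that $cf+d$ vanish at some $t_0\in I$, and by strict monotonicity of $f$ this can occur at most once. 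Evaluating $g(t_0)(cf(t_0)+d)=af(t_0)+b$ at such a $t_0$ yields $af(t_0)+b=0$, whence $ad=bc$; substituting $b=ad/c$ and multiplying through by $c$ produces the factorization $(c\,g(t)-a)(cf(t)+d)\equiv 0$ on $I$. Since $cf+d$ vanishes only at $t_0$, this forces $g(t)=a/c$ for all $t\in I\setminus\{t_0\}$, contradicting the strict monotonicity of $g$ on the nondegenerate interval $I$. Hence $cf+d$ never vanishes, i.e.\ $0\notin cf(I)+d$, and dividing gives exactly \eqref{help_27}.

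The main obstacle I anticipate is the $c\ne 0$ subcase just described: one must exclude a vanishing denominator, and the decisive point is that a zero of $cf+d$—necessarily unique by the strict increasingness of $f$—together with the algebraic relation would collapse $g$ to a constant off that single point, which is incompatible with $g$ being strictly increasing. The collocation criterion, though standard, is the other place where care is warranted, and I would state it as a short auxiliary lemma (or cite it) rather than reprove the induction inline.
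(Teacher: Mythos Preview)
Your proof is correct and follows essentially the same route as the paper: both directions hinge on recognizing (ii) as the linear dependence of $1,f,g,fg$, and the crucial $c\neq 0$ subcase is resolved identically (a zero of $cf+d$ forces $ad=bc$, whence $(cg-a)(cf+d)\equiv 0$ collapses $g$ to a constant off one point, contradicting strict monotonicity). The only difference is packaging: you invoke the collocation criterion as an abstract lemma and then split on whether the coefficient $c$ of $fg$ vanishes, whereas the paper fixes concrete points $t_3<t_4$ (and possibly $t_2$), expands the determinant explicitly along the first column, and reads off $a,b,c,d$ as $3\times 3$ cofactors---its case split (does the $3\times 3$ minor in $1,f,g$ vanish for all $t$?) is exactly your $c=0$ versus $c\neq 0$ distinction in disguise. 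Your version is conceptually cleaner; the paper's gives explicit formulas for the M\"obius coefficients.
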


\begin{proof}
(i)$\Longrightarrow$(ii).
Let us suppose that there exist four constants $a,b,c,d\in\RR$ such that $0\notin c f(I)+d$ and \eqref{help_27} hold.
By Lemma \ref{Lem_aux}, we have $ad>bc$.
Further, $(cf(t)+d)g(t) = af(t)+b$, $t\in I$, yielding that
 \[
   1\cdot b + f(t)\cdot a + g(t)\cdot (-d) + f(t)g(t)\cdot (-c) =0,\qquad t\in I.
 \]
In particular, for all $t_1,t_2,t_3,t_4\in I$, we have that
 \[
 \begin{bmatrix}
   1 & 1 & 1 & 1 \\
   f(t_1) & f(t_2) & f(t_3) & f(t_4) \\
   g(t_1) & g(t_2) & g(t_3) & g(t_4) \\
   f(t_1)g(t_1) & f(t_2)g(t_2) & f(t_3)g(t_3) & f(t_4)g(t_4) \\
 \end{bmatrix}^\top
 \cdot\begin{bmatrix}
   b \\
   a \\
   -d \\
   -c \\
 \end{bmatrix}
 =
 \begin{bmatrix}
   0 \\
   0 \\
   0 \\
   0 \\
 \end{bmatrix}.
 \]
As a consequence of the inequality $ad>bc$, we have that $(b,a,-d,-c)\ne(0,0,0,0)$, which shows that $(b,a,-d,-c)$ is a nontrivial solution to the above homogeneous system of linear equations. Hence we obtain that \eqref{help_28} must hold for all $t_1,t_2,t_3,t_4\in I$.

(ii)$\Longrightarrow$(i). Let $t_3<t_4$ be fixed elements of $I$. By the strict monotonicity of $f$, the vectors $(1,f(t_3))$ and $(1,f(t_4))$ are linearly independent. Assume first that, for all $t\in I$,
\Eq{help_28.5}{
   \begin{vmatrix}
    1 & 1 & 1\\
    f(t) & f(t_3) & f(t_4) \\
    g(t) & g(t_3) & g(t_4)
    \end{vmatrix}=0
}
holds.
Expanding the determinant along its first column, for all $t\in I$, we get
\Eq{*}{
   b+af(t)-dg(t)=0
}
where
\Eq{*}{
     b:=\begin{vmatrix}
    f(t_3) & f(t_4) \\
    g(t_3) & g(t_4)
    \end{vmatrix},\qquad
    a:=-\begin{vmatrix}
    1 & 1\\
    g(t_3) & g(t_4)
    \end{vmatrix},\qquad
    d:=-\begin{vmatrix}
    1 & 1\\
    f(t_3) & f(t_4)
    \end{vmatrix}\neq0.
}
Therefore, \eqref{help_27} holds with $c=0$ and we also have that $0\not\in cf(I)+d=\{d\}$.

Now we consider the case when \eqref{help_28.5} is not valid for all $t\in I$, that is, there exists $t_2\in I$ such that \eqref{help_28.5} does not hold for $t=t_2$.
Then, by \eqref{help_28}, for all $t\in I$,
\Eq{}{
   \begin{vmatrix}
    1 & 1 & 1 & 1\\
    f(t) & f(t_2) & f(t_3) & f(t_4) \\
    g(t) & g(t_2) & g(t_3) & g(t_4) \\
    f(t)g(t) & f(t_2)g(t_2) & f(t_3)g(t_3) & f(t_4)g(t_4) \\
    \end{vmatrix}=0.
}
Expanding the determinant along its first column, for all $t\in I$, we get
\Eq{abcd}{
   b+af(t)-dg(t)-cf(t)g(t)=0,
}
where
\Eq{*}{
  b&:=\begin{vmatrix}
    f(t_2) & f(t_3) & f(t_4) \\
    g(t_2) & g(t_3) & g(t_4) \\
    f(t_2)g(t_2) & f(t_3)g(t_3) & f(t_4)g(t_4)
    \end{vmatrix},\qquad
  &a&:=-\begin{vmatrix}
    1 & 1 & 1\\
    g(t_2) & g(t_3) & g(t_4) \\
    f(t_2)g(t_2) & f(t_3)g(t_3) & f(t_4)g(t_4)
    \end{vmatrix},\\
  d&:=-\begin{vmatrix}
    1 & 1 & 1\\
    f(t_2) & f(t_3) & f(t_4) \\
    f(t_2)g(t_2) & f(t_3)g(t_3) & f(t_4)g(t_4)
    \end{vmatrix} ,\qquad
  &c&:=\begin{vmatrix}
    1 & 1 & 1\\
    f(t_2) & f(t_3) & f(t_4) \\
    g(t_2) & g(t_3) & g(t_4)
    \end{vmatrix}\neq0.
}
Since $c\neq0$, we have that $cf+d$ is strictly monotone.
We now prove that $cf+d$ does not vanish on $I$. Assume, on the contrary, that for some $t_1\in I$, we have that $cf(t_1)+d=0$.
Then, $cf(t)+d\neq0$ for $t\in I\setminus\{t_0\}$, and, by \eq{abcd}, we get that $af(t_1)+b=0$.
This implies that $ad=bc$.
Therefore, applying \eqref{abcd} for $t\in I\setminus\{t_1\}$, we obtain
\Eq{*}{
  g(t)=\frac{af(t)+b}{cf(t)+d}
  =\frac{af(t)+\frac{ad}{c}}{cf(t)+d}=\frac{a}{c},
}
which contradicts the strict monotonicity of $g$.
\end{proof}

As a consequence of Theorem \ref{Thm_Baj_type_equality}, we can characterize the equality of quasiarithmetic-type $\psi$-estimators.

\begin{Cor}\label{Pro_qa_type_equality}
Let $X$ be a nonempty set, $\Theta$ be a nondegenerate open interval of $\RR$.
Let $f,g:\Theta\to\RR$ be strictly increasing functions, $F:X\to \conv(f(\Theta))$, and $G:X\to \conv(g(\Theta))$.
Let $\psi:X\times \Theta\to\RR$ and $\varphi: X\times \Theta\to\RR$ be given by
\[
  \psi(x,t):=F(x)-f(t),\qquad \varphi(x,t):=G(x)-g(t),
  \qquad x\in X,\,t\in\Theta.
\]
The following two assertions hold:
\begin{itemize}
 \item[(i)] If there exist two constants $a,b\in\RR$ with $a\ne 0$  such that
        \begin{align}\label{help_26}
          g(t)=af(t)+b,\quad t\in\Theta,\qquad
             \mbox{and}\qquad
         G(x)=aF(x)+b,\quad x\in X,
       \end{align}
       then $\vartheta_{n,\psi}(\bx) = \vartheta_{n,\varphi}(\bx)$ holds for each $n\in\NN$ and $\bx\in X^n$.
 \item[(ii)] In addition, suppose that $f$ is continuous, $F(X)=f(\Theta)$ and $\conv(G(X)) \subseteq g(\Theta)$.
             If $\vartheta_{n,\psi}(\bx) = \vartheta_{n,\varphi}(\bx)$ holds for each $n\in\NN$ and $\bx\in X^n$,
             then there exist two constants $a,b\in\RR$ with $a\ne 0$  such that \eqref{help_26} holds.
\end{itemize}
\end{Cor}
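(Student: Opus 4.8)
The plan is to derive both assertions as immediate specializations of the general equality results for Bajraktarevi\'c-type estimators, taking the weight functions to be constant, $p\equiv q\equiv 1$. Indeed, the quasi-arithmetic functions $\psi(x,t)=F(x)-f(t)$ and $\varphi(x,t)=G(x)-g(t)$ are precisely the Bajraktarevi\'c-type functions of \eqref{help_Baj_psi_phi} with $p\equiv q\equiv 1$, so the entire task reduces to matching hypotheses and reading off conclusions. I would use Theorem \ref{Thm_Baj_type_equality_2} for the sufficiency in part (i) and Theorem \ref{Thm_Baj_type_equality} for the necessity in part (ii).

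For part (i), I would apply Theorem \ref{Thm_Baj_type_equality_2} with the four constants $(a,b,c,d):=(a,b,0,1)$. Then $0\notin cf(\Theta)+d=\{1\}$ holds trivially, and the three relations in \eqref{gGq} specialize to $g(t)=(af(t)+b)/1=af(t)+b$, $G(x)=aF(x)+b$, and $q(x)=(0\cdot F(x)+1)p(x)=p(x)$. The first two of these are exactly the assumed relations \eqref{help_26}, while the third holds because $p\equiv q\equiv 1$. Thus all hypotheses of Theorem \ref{Thm_Baj_type_equality_2} are met, and its conclusion yields $\vartheta_{n,\psi}(\bx)=\vartheta_{n,\varphi}(\bx)$ for each $n\in\NN$ and $\bx\in X^n$.

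For part (ii), the added hypotheses that $f$ is continuous, $F(X)=f(\Theta)$ and $\conv(G(X))\subseteq g(\Theta)$ match exactly those of Theorem \ref{Thm_Baj_type_equality} (note that $F(X)=f(\Theta)$ gives $F:X\to f(\Theta)$, and $\conv(G(X))\subseteq g(\Theta)$ gives $G:X\to g(\Theta)$). Hence from the assumed equality of the estimators that theorem supplies constants $a,b,c,d\in\RR$ with $ad\ne bc$ and $0\notin cf(\Theta)+d$ for which \eqref{gGq} holds. The key step is then to collapse these general M\"obius-type relations to the affine relations \eqref{help_26}: the third relation in \eqref{gGq}, namely $q(x)=(cF(x)+d)p(x)$, together with $p\equiv q\equiv 1$, forces $cF(x)+d=1$ for every $x\in X$. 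Since $f$ is continuous and strictly increasing, $f(\Theta)$ is a nondegenerate interval, so $F(X)=f(\Theta)$ contains at least two distinct points; an affine map $s\mapsto cs+d$ that is identically $1$ on a nondegenerate set must have $c=0$, whence $d=1$. Substituting $c=0$, $d=1$ into the first two relations of \eqref{gGq} gives $g(t)=af(t)+b$ and $G(x)=aF(x)+b$, and the surviving condition $ad\ne bc$ now reads $a\ne 0$; this is exactly \eqref{help_26}.

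The whole argument is essentially a bookkeeping reduction, and I do not anticipate a serious obstacle. The only genuinely non-formal point is the deduction $c=0$, which rests on the observation that the normalization $q=(cF+d)p$ collapses to $cF+d\equiv 1$ precisely because the weights are constant, combined with the fact that $F$ is non-constant thanks to $F(X)=f(\Theta)$ being nondegenerate. Everything else follows from directly invoking the two previously established theorems.
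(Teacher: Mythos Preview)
Your proposal is correct and follows essentially the same approach as the paper's own proof: both parts are derived by specializing Theorems \ref{Thm_Baj_type_equality_2} and \ref{Thm_Baj_type_equality} to $p\equiv q\equiv 1$, and the key step in part (ii)---deducing $c=0$, $d=1$ from $cF(x)+d=1$ together with the nondegeneracy of $F(X)=f(\Theta)$---is handled identically.
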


\begin{proof}
(i). Let us assume that there exist two constants $a,b\in\RR$ with $a\ne 0$ such that \eqref{help_26} holds.
By choosing $c:=0$, $d:=1$ and $p(x):=q(x):=1$, $x\in X$, we have
 $cf(\Theta)+d = 1$, and hence $0\notin cf(\Theta)+d$.
Further, \eqref{gGq} is satisfied as well.
Consequently, Theorem \ref{Thm_Baj_type_equality_2} yields that
 $\vartheta_{n,\psi}(\bx) = \vartheta_{n,\varphi}(\bx)$ for each $n\in\NN$ and $\bx\in X^n$.

(ii).
One can apply Theorem \ref{Thm_Baj_type_equality} with the given functions $f,g,F$ and $G$ and by choosing $p(x):=q(x):=1$, $x\in X$.
Then we obtain that there exist four constants $a,b,c,d\in\RR$ with $ad\ne bc$ and $0\notin c f(\Theta)+d$ such that
 \eqref{gGq} holds, i.e.,
 \begin{align*}
    g(t)=\frac{af(t)+b}{cf(t)+d},\,\,\, t\in\Theta,\quad
    \mbox{and}\quad
    G(x)=\frac{aF(x)+b}{cF(x)+d},\quad
    q(x)=(cF(x)+d)p(x),\,\,\, x\in X.
 \end{align*}
Since $p=q=1$, we get $cF(x)+d=1$, $x\in X$, and hence $G(x) = aF(x)+b$, $x\in X$.
Consequently, in order to prove the statement, it is enough to verify that $cf(t)+d=1$, $t\in\Theta$.
We check that $c=0$ and hence $d=1$.
Since $\Theta$ is a nondegenerate open interval of $\RR$ and $f$ is strictly increasing and continuous,
 we have that $f(\Theta)$ is a nondegenerate interval of $\RR$.
Hence, using that $F(X)=f(\Theta)$, the range $F(X)$ of $F$ contains at least two distinct elements,
 and consequently, there exist $x_1,x_2\in X$ such that $F(x_1)\ne F(x_2)$.
Since $cF(x_1)+d=1$ and $cF(x_2)+d=1$, we have $c(F(x_1) - F(x_2))=0$,
 yielding that $c=0$, as desired.
\end{proof}

\section{Examples}
\label{Sec_stat_examples}

In this section, we give some applications of Theorem \ref{Lem_psi_est_eq_5} for solving the comparison problem
 for some statistical estimators that are special cases of generalized $\psi$-estimators.
We emphasize that Theorem \ref{Lem_psi_est_eq_5} makes one possible to compare two $\psi$-estimators even if these estimators cannot be computed explicitly,
 only they can be numerically approximated.
We consider empirical expectiles, Mathieu-type estimators and solutions of likelihood equations in case of normal, a Beta-type, Gamma, Lomax (Pareto type II), lognormal and Laplace distributions.

\begin{Ex}[Empirical expectiles]
Let $\alpha, \beta\in(0,1)$, $X:=\Theta:=\RR$, and $\psi, \varphi:\RR\times\RR\to\RR$ given by
 \[
   \psi(x,t):=\begin{cases}
                \alpha(x-t) & \text{if $x>t$,}\\
                0 & \text{if $x=t$,}\\
                (1-\alpha)(x-t) & \text{if $x<t$,}
               \end{cases}
  \qquad\text{and}\qquad
   \varphi(x,t):=\begin{cases}
                 \beta(x-t) & \text{if $x>t$,}\\
                 0 & \text{if $x=t$,}\\
                 (1-\beta)(x-t) & \text{if $x<t$.}
                \end{cases}
 \]
By Example 4.3 in Barczy and P\'ales \cite{BarPal2}, $\psi$ and $\varphi$ are $Z$-functions,
 $\vartheta_{1,\psi}(x) = \vartheta_{1,\varphi}(x) = x$, $x\in\RR$, and
 hence we readily get that $\Theta_\psi = \Theta_\varphi = \Theta=\RR$.
For each $n\in\NN$ and $\bx\in\RR^n$, $\vartheta_{n,\psi}(\bx)$ and $\vartheta_{n,\varphi}(\bx)$ can be considered as empirical $\alpha$- and $\beta$-expectiles based on $\bx$, respectively, for more details, see Barczy and P\'ales \cite[Example 4.3]{BarPal2}.
Using Theorem \ref{Lem_psi_est_eq_5}, we have that $\vartheta_{n,\psi}(\bx)\leq \vartheta_{n,\varphi}(\bx)$ holds
 for each $n\in\NN$ and $\bx\in \RR^n$ if and only if
 \begin{align*}
  \psi(x,t) \varphi(y,t) \leq \psi(y,t) \varphi(x,t)
 \end{align*}
 is valid for all $t\in\RR$ and for all $x,y\in\RR$ with $\vartheta_{1,\varphi}(x)<t<\vartheta_{1,\varphi}(y)$, or equivalently,
 \[
   (1-\alpha)(x-t)\beta(y-t) \leq \alpha(y-t)(1-\beta)(x-t)
 \]
 is valid for $x,y,t\in\RR$ with $x<t<y$.
Hence, $\vartheta_{n,\psi}(\bx)\leq \vartheta_{n,\varphi}(\bx)$ holds for all $n\in\NN$ and $\bx\in \RR^n$ if and only if $\alpha \leq \beta$.
\proofend
\end{Ex}

\begin{Ex}[Mathieu-type estimators]
Let $X:=\RR$, $\Theta:=\RR$ and $\psi, \varphi:\RR\times\RR\to\RR$ be given by
 \begin{align}\label{help_Mathieu}
   \psi(x,t):=\sign(x-t)f(\vert x-t\vert)
    \quad \text{and}\quad
   \varphi(x,t):=\sign(x-t)g(\vert x-t\vert),\qquad x,t\in\RR,
 \end{align}
 where $f,g:\RR_+\to\RR$ are continuous and strictly increasing functions with $f(0)=g(0)=0$.
This particular form of $\psi$  has been recently investigated by Mathieu \cite{Mat}.
By Proposition 4.4 in Barczy and P\'ales \cite{BarPal2}, $\psi$ and $\varphi$ have the property $[Z]$
 and $\vartheta_{1,\psi}(x) = \vartheta_{1,\varphi}(x) = x$, $x\in\RR$, and
 hence we readily get that $\Theta_\psi = \Theta_\varphi = \Theta=\RR$.
Using Theorem \ref{Lem_psi_est_eq_5}, we have that $\vartheta_{n,\psi}(\bx)\leq \vartheta_{n,\varphi}(\bx)$ holds
 for each $n\in\NN$ and $\bx\in \RR^n$ if and only if
 \begin{align*}
  \psi(x,t) \varphi(y,t) \leq \psi(y,t) \varphi(x,t)
 \end{align*}
 is valid for all $t\in\RR$ and for all $x,y\in\RR$ with $\vartheta_{1,\varphi}(x)<t<\vartheta_{1,\varphi}(y)$,
 or equivalently,
 \[
  \sign(x-t)f(\vert x-t\vert) \sign(y-t)g(\vert y-t\vert)
       \leq \sign(y-t)f(\vert y-t\vert) \sign(x-t)g(\vert x-t\vert)
 \]
 is valid for all $x,y,t\in\RR$ with $x<t<y$.
It is equivalent to
 \[
    f(y-t) g(t-x) \leq f(t-x) g(y-t) \qquad \text{ for all $x,y,t\in\RR$ with $x<t<y$.}
 \]
Using that $f$ and $g$ are strictly increasing with $f(0)=g(0)=0$, this is equivalent to
 \begin{align}\label{help_21}
   \frac{g(t-x)}{f(t-x)}\leq \frac{g(y-t)}{f(y-t)} \qquad \text{ for all $x,y,t\in\RR$ with $x<t<y$.}
 \end{align}
Given $u,v\in\RR_{++}$, by choosing $x:=t-u$ and $y:=t+v$ with $t\in\RR$,
 we have that \eqref{help_21} is equivalent to
 \[
    \frac{g(u)}{f(u)} \leq \frac{g(v)}{f(v)}\qquad \text{for all $u,v\in\RR_{++}$,}
 \]
 which is equivalent to the existence of a constant $C\in\RR_{++}$ such that $\frac{g(u)}{f(u)}=C$, $u\in\RR_{++}$.
Consequently, using $f(0)=g(0)=0$, we have that $\vartheta_{n,\psi}(\bx)\leq \vartheta_{n,\varphi}(\bx)$ holds
 for each $n\in\NN$ and $\bx\in \RR^n$ if and only if there exists a constant $C\in\RR_{++}$ such that $g(t)=C f(t)$, $t\in\RR_+$.
Based on what we proved, we can formulate the following result:

{\bf Proposition.}
{\it Let $f,g:\RR_+\to\RR$ be continuous and strictly increasing functions with $f(0)=g(0)=0$,
 and let $\psi, \varphi:\RR\times\RR\to\RR$ be given by \eqref{help_Mathieu}.
Then the following assertions are equivalent:
\vspace{-5mm}
 \begin{itemize}
   \item[(i)] $\vartheta_{n,\psi}(\bx)\leq \vartheta_{n,\varphi}(\bx)$ holds for each $n\in\NN$ and $\bx\in \RR^n$,
   \item[(ii)] $\vartheta_{n,\psi}(\bx)=\vartheta_{n,\varphi}(\bx)$ holds for each $n\in\NN$ and $\bx\in \RR^n$,
   \item[(iii)] there exists a constant $C\in\RR_{++}$ such that $g(t)=C f(t)$, $t\in\RR_+$.
 \end{itemize}
\vspace{-6mm}\proofend}
\end{Ex}

In what follows, given a random variable $\xi$, by a sample of size $n$ (where $n\in\NN$), we mean independent and identically distributed random variables \ $\xi_1,\ldots,\xi_n$ \ with common distribution as that of $\xi$.

\begin{Ex}[Normal distribution]
Let $\xi$ be a normally distributed random variable with mean $m\in\RR$ and with variance $\sigma^2$, where $\sigma>0$.
Let $n\in\NN$ and $x_1,\ldots,x_n\in\RR$ be a realization of a sample of size $n$ for $\xi$.
Supposing that $m$ is known, there exists a unique MLE of $\sigma^2$ based on $x_1,\ldots,x_n\in\RR$,
 and it takes the form \ $\widehat{\sigma^2_n}:=\frac{1}{n}\sum_{i=1}^n (x_i-m)^2$.
In this case, let $\Theta:=(0,\infty)$, $X:=\RR\setminus\{m\}$, and $\psi:(\RR\setminus\{m\})\times(0,\infty)\to\RR$,
 \begin{align}\label{help_norm1}
    \psi(x,\sigma^2)
     := \frac{1}{2(\sigma^2)^2}\left( (x-m)^2 - \sigma^2\right) ,
         \qquad x\in\RR\setminus\{m\},\; \sigma^2>0.
 \end{align}
Hence $\psi$ is a $T_1$-function with $\vartheta_{1,\psi}(x):=(x-m)^2$, $x\in\RR\setminus\{m\}$, and $\psi(x,\vartheta_{1,\psi}(x))=0$, $x\in\RR\setminus\{m\}$.
Further, we readily get that $\Theta_\psi = \Theta=(0,\infty)$.
By Example 4.5 in Barczy and P\'ales \cite{BarPal2}, we have that for each $n\in\NN$ and
 $x_1,\ldots,x_n\in\RR\setminus\{m\}$, the (likelihood) equation $\sum_{i=1}^n \psi(x_i,t)=0$, $t>0$, has a unique solution,
 which is equal to $\vartheta_{n,\psi}(x_1,\dots,x_n)=\frac{1}{n}\sum_{i=1}^n (x_i-m)^2=\widehat{\sigma^2_n}$.
Consequently, $\psi$ is a $Z$-function.

Further, let $m^*\in\RR$, and $\varphi:(\RR\setminus\{m^*\})\times(0,\infty)\to\RR$,
 \begin{align}\label{help_norm2}
    \varphi(x,\sigma^2)
     := \frac{1}{2(\sigma^2)^2}\left( (x-m^*)^2 - \sigma^2\right) ,
         \qquad x\in\RR\setminus\{m^*\},\; \sigma^2>0.
 \end{align}
Then $\varphi$ is a $Z$-function with $\vartheta_{n,\varphi}(x_1,\dots,x_n)=\frac{1}{n}\sum_{i=1}^n (x_i-m^*)^2$ for $n\in\NN$ and $x_1,\ldots,x_n\in\RR\setminus\{m^*\}$,
  and $\Theta_\varphi = (0,\infty)$.

Next, we compare the $\psi$-estimators corresponding to the functions given in \eqref{help_norm1} and \eqref{help_norm2}.
Then $\vartheta_{1,\psi}(x_1)\leq \vartheta_{1,\varphi}(x_1)$ holds for all $x_1\in\RR\setminus\{m,m^*\}$
if and only if $(x_1-m)^2 \leq (x_1-m^*)^2$ holds for all $x_1\in\RR\setminus\{m,m^*\}$, which is equivalent to $m=m^*$
(indeed, the limit of the right hand side as $x_1$ tends to $m^*$ is zero implying that $(m^*-m)^2=0$ holds).
Consequently, we readily have that $\vartheta_{n,\psi}(\bx)\leq \vartheta_{n,\varphi}(\bx)$ holds for each $n\in\NN$ and $\bx\in (\RR\setminus\{m,m^*\})^n$ if and only if $m=m^*$, and in this case $\vartheta_{n,\psi}(\bx) = \vartheta_{n,\varphi}(\bx)$ holds for each $n\in\NN$ and $\bx\in (\RR\setminus\{m,m^*\})^n$.
Note also that $\psi(x,\sigma^2)\leq \varphi(x,\sigma^2)$ holds for all $x\in\RR\setminus\{m,m^*\}$
 and $\sigma^2>0$ if and only if $(x-m)^2 \leq (x-m^*)^2$ holds for all $x\in\RR\setminus\{m,m^*\}$,
 which is equivalent to $m=m^*$, as we have already seen.
This is in accordance with part (iv) of Theorem \ref{Lem_psi_est_eq_5} (applied with $X:=\RR\setminus\{m,m^*\}$).
\proofend
\end{Ex}

\begin{Ex}[Beta-type distribution]\label{Ex_Bet_type}
Let $\alpha,\beta\in\RR_{++}$ and let $\xi$ be an absolutely continuous random variable with a density function
 \[
     f_\xi(x) := \begin{cases}
                  \alpha \beta x^{\beta-1} (1-x^\beta)^{\alpha-1} & \text{if $x\in(0,1)$,}\\
                  0 & \text{if $x\notin(0,1)$.}
               \end{cases}
 \]

Supposing that $\beta\in\RR_{++}$ is known, one can check that, given $n\in\NN$ and a realization $x_1,\ldots,x_n\in(0,1)$
 of a sample of size $n$ for $\xi$, there exists a unique MLE of $\alpha$ and it takes the form
 \[
  \halpha_n := -\frac{n}{\sum_{i=1}^n \ln(1-x_i^\beta)}.
 \]
As an application of our results in Barczy and P\'ales \cite{BarPal2}, we also establish the existence and uniqueness of a solution of the corresponding likelihood equation for $\alpha$ using Theorem 2.10 and Proposition 2.12 in Barczy and P\'ales \cite{BarPal2}.
In the considered case, using the setup given before Example 4.6 in Barczy and P\'ales \cite{BarPal2}, we have
 \ $\Theta := (0,\infty)$ \ and \ $f:\RR\times (0,\infty)\to\RR$,
 \[
   f(x,\alpha):=\begin{cases}
                  \alpha \beta x^{\beta-1} (1-x^\beta)^{\alpha-1} & \text{if $x\in(0,1)$, \ $\alpha>0$,}\\
                  0 & \text{otherwise,}
               \end{cases}
 \]
 and consequently, \ $\cX_f=(0,1)$.
\ Then \ $\psi:(0,1)\times (0,\infty)\to\RR$,
 \begin{align}\label{beta_1}
   \psi(x,\alpha) = \frac{\partial_2 f(x,\alpha)}{f(x,\alpha)}
                  = \frac{\beta x^{\beta-1} \big( (1-x^\beta)^{\alpha-1} + \alpha (1-x^\beta)^{\alpha-1}\ln(1-x^\beta) \big)}
                         {\alpha \beta x^{\beta-1} (1-x^\beta)^{\alpha-1}}
                  = \frac{1}{\alpha} + \ln(1-x^\beta)
 \end{align}
for $x\in(0,1)$ and $\alpha>0$.
We have that $\psi$ is a $Z_1$-function with $\vartheta_{1,\psi}(x) = - \frac{1}{\ln(1-x^\beta)}$, $x\in(0,1)$, $\Theta_\psi=\Theta=(0,\infty)$, and $\psi$ is strictly decreasing and continuous in its second variable.
Further, using Proposition 2.12 in Barczy and P\'ales \cite{BarPal2} (with $X:=\cX_f = (0,1)$),
 we can conclude that $\psi$ is a $Z$-function, and, for each $n\in\NN$ and $x_1,\ldots,x_n\in(0,1)$, the (likelihood) equation
 $\sum_{i=1}^n \psi(x_i,\alpha)=0$, $\alpha>0$, has a unique solution, which takes the form
 $\vartheta_{n,\psi}(x_1,\ldots,x_n) = -\frac{n}{\sum_{i=1}^n \ln\left(1-x_i^\beta\right)} = \halpha_n$, as desired.

\noindent Moreover, let $\beta^*\in\RR_{++}$ be also given and let $\varphi:(0,1)\times (0,\infty)\to\RR$ be defined by
 \begin{align}\label{beta_2}
   \varphi(x,\alpha) := \frac{1}{\alpha} + \ln(1-x^{\beta^*}), \qquad  x\in(0,1),\quad \alpha>0.
 \end{align}
Then $\varphi$ is a $Z$-function with $\vartheta_{n,\varphi}(x_1,\ldots,x_n) = -\frac{n}{\sum_{i=1}^n \ln\left(1-x_i^{\beta^*}\right)}$ for $n\in\NN$ and $x_1,\ldots,x_n\in(0,1)$, and we have $\Theta_\varphi=(0,\infty)$.

\noindent Next, we compare the $\psi$-estimators corresponding to the functions given in \eqref{beta_1} and \eqref{beta_2}.
By a direct calculation, one check that $\vartheta_{1,\psi}(x)\leq \vartheta_{1,\varphi}(x)$ holds for all $x\in(0,1)$
 if and only if $\beta\leq \beta^*$; and, more generally,
 $\vartheta_{n,\psi}(x_1,\ldots,x_n) \leq \vartheta_{n,\varphi}(x_1,\ldots,x_n)$ holds for each $n\in\NN$ and $x_1,\ldots,x_n\in(0,1)$ if and only if $\beta\leq \beta^*$.
As a consequence, the equivalent assertions of (i)-(iv) of Theorem \ref{Lem_psi_est_eq_5} hold if and only if $\beta\leq \beta^*$.
In particular, note that if $\beta\leq \beta^*$, then
 \[
   \psi(x,\alpha) = \frac{1}{\alpha} + \ln(1-x^\beta) \leq \frac{1}{\alpha} + \ln(1-x^{\beta^*}) = \varphi(x,\alpha),
   \qquad x\in(0,1),\quad \alpha>0,
 \]
 i.e.,  \eqref{psi_est_inequality_5} holds with $p(\alpha):=1$, $\alpha>0$.

Now, suppose that $\alpha\in\RR_{++}$ is known.
Similarly as above, we have \ $\Theta := (0,\infty)$ \ and \ $f:\RR\times (0,\infty)\to\RR$,
 \[
   f(x,\beta):=\begin{cases}
                  \alpha \beta x^{\beta-1} (1-x^\beta)^{\alpha-1} & \text{if $x\in(0,1)$, \ $\beta>0$,}\\
                  0 & \text{otherwise,}
               \end{cases}
 \]
 and consequently, \ $\cX_f=(0,1)$.
\ Then \ $\psi:(0,1)\times (0,\infty)\to\RR$,
 \begin{align*}
   \psi(x,\beta) &= \frac{\partial_2 f(x,\beta)}{f(x,\beta)} = \partial_2 (\ln f)(x,\beta)
                  = \partial_2\big(\ln(\alpha) + \ln(\beta) + (\beta-1)\ln(x) + (\alpha-1)\ln(1-x^\beta)\big)\\
                 &= \frac{1}{\beta} + \ln(x) + (1-\alpha)\frac{x^\beta \ln(x)}{1-x^\beta},
                  \qquad  x\in(0,1),\quad \beta>0.
   \end{align*}
Note that $\psi$ can be rewritten in the form
 \begin{align}\label{help_29}
   \psi(x,\beta) = \frac{1}{\beta}\left(1+ \frac{1-\alpha x^\beta}{1-x^\beta} \ln(x^\beta)\right),
                   \qquad  x\in(0,1),\quad \beta>0.
 \end{align}
Define \ $\widetilde{\psi}:(0,1)\times (0,\infty)\to\RR$,
 $\widetilde{\psi}(x,\beta):=\beta\psi(x,\beta)$, $x\in(0,1)$, $\beta>0$.
Next, we check that $\widetilde{\psi}$ is strictly decreasing in its second variable.
Since for all $x\in(0,1)$, the range of the strictly decreasing function $(0,\infty)\ni\beta\mapsto x^\beta$ is the interval $(0,1)$,
 in order to prove that $\widetilde{\psi}$ is strictly decreasing in its second variable, it is enough to verify that the function
 \begin{align}\label{help_31}
 (0,1)\ni u\mapsto \frac{1-\alpha u}{1-u}\ln(u)
 \end{align}
 is strictly increasing.
For this, it is enough to show that its derivative is positive everywhere, i.e.,
 \[
  -\alpha\frac{\ln(u)}{1-u} + \frac{1-\alpha u}{(1-u)^2}\ln(u) + \frac{1-\alpha u}{1-u}\cdot\frac{1}{u}>0, \qquad u\in(0,1),
 \]
 which is equivalent to
 \[
 h(u):=(1-\alpha)\ln(u) + \frac{1}{u} - \alpha -1 +\alpha u>0,\qquad u\in(0,1).
 \]
Since $h$ is continuous, $\lim_{u\downarrow 0} h(u)=\infty$ and $\lim_{u\uparrow 1}h(u)=0$,
 in order to show that $h(u)>0$, $u\in(0,1)$, it is enough to check that $h'(u)<0$, $u\in(0,1)$.
This readily follows, since
 \[
   h'(u) = \frac{1-\alpha}{u} - \frac{1}{u^2} + \alpha <0, \qquad u\in(0,1),
 \]
 holds if and only if $(u-1)(\alpha u + 1)<0$, $u\in(0,1)$, which is trivially satisfied.

\noindent  Now we verify that $\widetilde{\psi}$ is a $Z$-function.
First, we check that $\widetilde{\psi}$ is a $T_1$-function.
Using again that, for all $x\in(0,1)$, the range of the strictly decreasing function $(0,\infty)\ni\beta\mapsto x^\beta$ is $(0,1)$, taking into account \eqref{help_29}, it is enough to verify that there exists a unique solution of the equation
 \[
    g(u):=1 + \frac{1-\alpha u}{1-u}\ln(u) = 0, \qquad u\in(0,1).
 \]
Since $g$ is continuous and strictly increasing (proved earlier, see \eqref{help_31}),
 $\lim_{u\downarrow 0} g(u) = -\infty$ and $\lim_{u\uparrow 1}g(u)=1+(1-\alpha)\lim_{u\uparrow 1}\frac{1/u}{-1} = \alpha >0$,
the Bolzano theorem yields the existence of a unique solution in question. Hence $\widetilde{\psi}$ is a $T_1$-function.
This, together with the fact that $\widetilde{\psi}$ is strictly decreasing in its second variable, implies that $\widetilde{\psi}$ is a $T$-function (see Barczy and P\'ales \cite[Proposition 2.12]{BarPal2}).
Since $\widetilde{\psi}$ is continuous in its second variable as well, we have that it is a Z-function.
It immediately follows that $\psi$ is also a Z-function.
As a consequence, for each $n\in\NN$ and $x_1,\ldots,x_n\in(0,1)$, there is a unique solution
 $\vartheta_{n,\psi}(x_1,\ldots,x_n)$ of the (likelihood)
equation
 $\sum_{i=1}^n \psi(x_i,\beta)=0$, $\beta>0$, that is, of the equation
 \begin{align}\label{help_32}
  1+\ln \left( \sqrt[n]{x_1^\beta\cdots x_n^\beta}\right)
      = (\alpha-1)\frac{1}{n}\sum_{i=1}^n \frac{x_i^\beta\ln(x_i^\beta)}{1-x_i^\beta},\qquad \beta>0.
 \end{align}

In addition, let $\alpha^*\in\RR_{++}$ be also given and let \ $\varphi:(0,1)\times (0,\infty)\to\RR$ be defined by
 \begin{align}\label{beta_3}
   \varphi(x,\beta) := \frac{1}{\beta}\left(1+ \frac{1-\alpha^* x^\beta}{1-x^\beta} \ln(x^\beta)\right),
                   \qquad  x\in(0,1),\quad \beta>0.
 \end{align}
Then, as we have seen, $\varphi$ is a $Z$-function.

\noindent Next, we compare the $\psi$-estimators corresponding to the functions given in \eqref{help_29} and \eqref{beta_3}.
By a direct computation, one can readily see that $\psi(x,\beta)\leq \varphi(x,\beta)$ holds for $x\in(0,1)$ and $\beta>0$ if and only if $\alpha\leq \alpha^*$.
Further, since $\psi$ and $\varphi$ are $Z$-functions, by part (ii) of Remark \ref{Rem_Thm_main},
 we have that if $\psi(x,\beta)\leq \varphi(x,\beta)$, $x\in(0,1)$, $\beta>0$, then
 $\vartheta_{1,\psi}(x)\leq \vartheta_{1,\varphi}(x)$, $x\in(0,1)$.
Consequently, Theorem \ref{Lem_psi_est_eq_5} yields that $\vartheta_{n,\psi}(x_1,\ldots,x_n)\leq \vartheta_{n,\varphi}(x_1,\ldots,x_n)$ holds
 for all $n\in\NN$ and $x_1,\ldots,x_n\in(0,1)$ if and only if $\alpha\leq \alpha^*$.
\proofend
\end{Ex}

The next proposition highlights another application of Theorem \ref{Lem_psi_est_eq_5}.
Namely, given $\alpha\in\RR_{++}$, $n\in\NN$ and $x_1,\ldots,x_n\in(0,1)$, one can approximate the unique solution
 of the equation \eqref{help_32} in terms of the geometric mean of $x_1,\ldots,x_n$.

\begin{Pro}\label{Pro_Beta_approx}
For all $\alpha\in\RR_{++}$, $n\in\NN$ and $x_1,\ldots,x_n\in(0,1)$, let $r(\alpha,n,x_1,\ldots,x_n)$ denote
 the unique solution of the equation \eqref{help_32}.
Then
 \[
   -\frac{\min(\alpha,1)}{\ln\big(\sqrt[n]{x_1\cdots x_n}\big)}
      \leq r(\alpha,n,x_1,\ldots,x_n)
      \leq
   -\frac{\max(\alpha,1)}{\ln\big(\sqrt[n]{x_1\cdots x_n}\big)}.
 \]
In particular, $r(\alpha,n,x_1,\ldots,x_n)$ tends to $-n/\ln(x_1\cdots x_n)$ as $\alpha\to 1$ for all $n\in\NN$ and $x_1,\ldots,x_n\in(0,1)$.
\end{Pro}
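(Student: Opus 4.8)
The plan is to sandwich $r:=r(\alpha,n,x_1,\ldots,x_n)$ between two explicitly computable quasi-arithmetic-type estimators and then squeeze as $\alpha\to1$. Recall from Example \ref{Ex_Bet_type} that, with $\psi$ given by \eqref{help_29}, equation \eqref{help_32} is equivalent to $\sum_{i=1}^n\psi(x_i,\beta)=0$, that $\psi\in\Psi[Z]((0,1),(0,\infty))$, and hence $r=\vartheta_{n,\psi}(\bx)$ is the unique point at which $\beta\mapsto\sum_{i=1}^n\psi(x_i,\beta)$ changes sign from positive to negative. Throughout I write $L:=\ln\big(\sqrt[n]{x_1\cdots x_n}\big)=\frac1n\sum_{i=1}^n\ln x_i<0$.

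First I would introduce the two comparison functions $\varphi(x,\beta):=\frac1\beta+\ln x$ and $\chi(x,\beta):=\frac{\alpha}{\beta}+\ln x$ on $(0,1)\times(0,\infty)$. Each has the quasi-arithmetic-type form $F(x)-f(\beta)$ with $F(x)=\ln x$ and $f(\beta)=-1/\beta$, respectively $f(\beta)=-\alpha/\beta$; both choices of $f$ are strictly increasing with range $(-\infty,0)=F((0,1))$, so that $\conv(F((0,1)))\subseteq f((0,\infty))$ and Lemma \ref{Lem_Baj_psi_prop} gives $\varphi,\chi\in\Psi[Z]((0,1),(0,\infty))$. By \eqref{help14} their estimators are explicit, namely $\vartheta_{n,\varphi}(\bx)=-1/L$ and $\vartheta_{n,\chi}(\bx)=-\alpha/L$, the two quantities figuring in the bounds.

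Next I would establish the pointwise comparisons between $\psi$ and these functions. Multiplying by $\beta>0$ and substituting $u:=x^\beta\in(0,1)$, a direct computation gives $\beta\big(\psi(x,\beta)-\varphi(x,\beta)\big)=\frac{u(1-\alpha)}{1-u}\ln u$ and $\beta\big(\psi(x,\beta)-\chi(x,\beta)\big)=(1-\alpha)\big(1+\frac{u\ln u}{1-u}\big)$. Since $\frac{u\ln u}{1-u}<0$ on $(0,1)$, the first difference has the sign of $-(1-\alpha)$. For the second, the one genuine estimate is $h(u):=1+\frac{u\ln u}{1-u}>0$ on $(0,1)$, which I would verify through $k(u):=u\ln u-u+1$: one has $k(1)=0$ and $k'(u)=\ln u<0$, so $k$ is strictly decreasing and thus $k>0$ on $(0,1)$, whence $h>0$; consequently the second difference has the sign of $1-\alpha$. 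Therefore, if $\alpha\le1$ then $\chi\le\psi\le\varphi$ pointwise on $(0,1)\times(0,\infty)$, while if $\alpha\ge1$ then $\varphi\le\psi\le\chi$ pointwise.

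Finally I would convert these pointwise inequalities into estimator inequalities. The shortest route uses Lemma \ref{Lem_psi_est_eq_2}: as $\varphi,\chi\in\Psi[Z]$ vanish in sum at their own estimators, in the case $\alpha\le1$ one gets $\sum_{i=1}^n\psi(x_i,-1/L)\le\sum_{i=1}^n\varphi(x_i,-1/L)=0$ and $\sum_{i=1}^n\psi(x_i,-\alpha/L)\ge\sum_{i=1}^n\chi(x_i,-\alpha/L)=0$, which by Lemma \ref{Lem_psi_est_eq_2} applied to $\psi\in\Psi[Z]$ yields exactly $-\alpha/L\le r\le-1/L$; the case $\alpha\ge1$ is symmetric and gives $-1/L\le r\le-\alpha/L$. (Equivalently, one may invoke Theorem \ref{Lem_psi_est_eq_5}, the needed ordering of the $\vartheta_1$'s being supplied by the last assertion of Remark \ref{Rem_Thm_main}(ii) and the multiplier taken as $p\equiv1$.) In both cases the bounds are $-\min(\alpha,1)/L$ and $-\max(\alpha,1)/L$, which is the claimed inequality since $-1/L=-n/\sum_{i=1}^n\ln x_i=-n/\ln(x_1\cdots x_n)$; letting $\alpha\to1$ both $\min(\alpha,1)$ and $\max(\alpha,1)$ tend to $1$, so the squeeze gives $r\to-n/\ln(x_1\cdots x_n)$. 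The main obstacle is the pointwise comparison of the nonlinear $\psi$ with the functions $\varphi,\chi$ that are affine in $1/\beta$, and specifically the positivity of $h$ on $(0,1)$, which is what the reduction to the monotonicity of $k(u)=u\ln u-u+1$ resolves.
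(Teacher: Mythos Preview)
Your proof is correct and follows essentially the same route as the paper. Both introduce the comparison functions $\frac{1}{\beta}+\ln x$ and $\frac{\alpha}{\beta}+\ln x$ (the paper packages them as $\psi_*=\frac{\min(\alpha,1)}{\beta}+\ln x$ and $\psi^*=\frac{\max(\alpha,1)}{\beta}+\ln x$), establish the pointwise sandwich via the same elementary inequality (your $k(u)=u\ln u-u+1>0$ on $(0,1)$ is equivalent to the paper's $\ln u\geq\frac{u-1}{u}$), and then convert pointwise comparison to estimator comparison. The only minor difference is that the paper invokes Theorem~\ref{Lem_psi_est_eq_5} after first checking $\vartheta_{1,\psi_*}\leq\vartheta_{1,\psi}\leq\vartheta_{1,\psi^*}$ via Remark~\ref{Rem_Thm_main}(ii), whereas your primary route through Lemma~\ref{Lem_psi_est_eq_2} is a slight shortcut that bypasses this verification; you also note the Theorem~\ref{Lem_psi_est_eq_5} alternative, which matches the paper exactly.
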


\begin{proof}
Note that $r(\alpha,n,x_1,\ldots,x_n)$ is nothing else but $\vartheta_{n,\psi}(x_1,\ldots,x_n)$
 (in the notation $r(\alpha,n,x_1,\ldots,x_n)$, the dependence of $\vartheta_{n,\psi}(x_1,\ldots,x_n)$
 on the parameter $\alpha$ is displayed as well).
For all $\alpha\in\RR_{++}$, $n\in\NN$ and $x_1,\ldots,x_n\in(0,1)$, the unique existence of
 $r(\alpha,n,x_1,\ldots,x_n)$ follows from the discussion in Example \ref{Ex_Bet_type}.

First,  we show that for $\alpha,\beta \in \RR_{++}$ and $x\in(0,1)$, we have
\Eq{ineq_beta}{
   \frac{\min(\alpha,1)}{\beta}+\ln(x)
   \leq
   \frac{1}{\beta}\left(1+ \frac{1-\alpha x^\beta}{1-x^\beta} \ln(x^\beta)\right)
  \leq \frac{\max(\alpha,1)}{\beta}+\ln(x).
}
If $\alpha\in(0,1]$, then $1-\alpha x^\beta\geq 1-x^\beta>0$, therefore,
\Eq{*}{
  \frac{1}{\beta}\left(1+ \frac{1-\alpha x^\beta}{1-x^\beta} \ln(x^\beta)\right)
  \leq \frac{1}{\beta}\left(1+\ln(x^\beta) \right)
  =\frac{1}{\beta}+\ln(x),
}
which gives the right hand side inequality in case of $\alpha\in(0,1]$. If $\alpha>1$, then $1-\alpha x^\beta<1-x^\beta$,
 therefore (also using that $1-x^\beta>0$), we get
\Eq{*}{
  \frac{1}{\beta}\left(1+ \frac{1-\alpha x^\beta}{1-x^\beta} \ln(x^\beta)\right)
  \geq \frac{1}{\beta}\left(1+\ln(x^\beta) \right)
  =\frac{1}{\beta}+\ln(x),
}
which implies the left hand side inequality in case of $\alpha>1$.

If $\alpha\in(0,1]$, then left hand side inequality is equivalent to
\Eq{*}{
  \frac{\alpha}{\beta}+\ln(x)
   \leq \frac{1}{\beta}\left(1+ \frac{1-\alpha x^\beta}{1-x^\beta} \ln(x^\beta)\right).
}
This, with the substitution $u:=x^\beta\in(0,1)$ can be rewritten as
\Eq{*}{
  \alpha+\ln(u)
   \leq 1+ \frac{1-\alpha u}{1-u} \ln(u).
}
After some computation, this inequality reads as follows
\Eq{au}{
  0\leq (1-\alpha)\left(\ln(u)-\frac{u-1}{u}\right),
}
which holds according to the well-known inequality $\ln(u)\geq \frac{u-1}u$, $u>0$, in case of $\alpha\in(0,1]$.

If $\alpha>1$, then right hand side inequality is equivalent to
\Eq{*}{
  \frac{1}{\beta}\left(1+ \frac{1-\alpha x^\beta}{1-x^\beta} \ln(x^\beta)\right)
  \leq \frac{\alpha}{\beta}+\ln(x).
}
By a similar argument, this inequality is the consequence of the reverse of inequality \eq{au}, which holds for all $u\in(0,1)$ in case of $\alpha>1$.

As a next step, motivated by \eqref{ineq_beta}, for all $\alpha\in\RR_{++}$, let $\psi,\,\psi_*,\,\psi^*:(0,1)\times (0,\infty)\to\RR$ be defined by
 \begin{align*}
   &\psi(x,\beta):= \frac{1}{\beta}\left(1+ \frac{1-\alpha x^\beta}{1-x^\beta} \ln(x^\beta)\right), \qquad x\in(0,1),\quad \beta>0,\\
   &\psi_*(x,\beta):= \frac{\min(\alpha,1)}{\beta}+\ln(x), \qquad x\in(0,1),\quad \beta>0,\\
   &\psi^*(x,\beta):= \frac{\max(\alpha,1)}{\beta}+\ln(x), \qquad x\in(0,1),\quad \beta>0.
 \end{align*}
In Example \ref{Ex_Bet_type} we have already checked that $\psi$ was a $Z$-function.
Further, since $\psi_*$ and $\psi^*$ are strictly decreasing and continuous in their second variables,
 we get that $\psi_*$ and $\psi^*$ are $Z$-functions as well (following from Barczy and P\'ales \cite[Proposition 2.12]{BarPal2}).
Using these properties, \eqref{ineq_beta} and
  part (ii) of Remark \ref{Rem_Thm_main}, we get that
 \[
  \vartheta_{1,\psi_*}(x)\leq \vartheta_{1,\psi}(x) \leq \vartheta_{1,\psi^*}(x),\qquad x\in(0,1).
 \]
Consequently, by Theorem \ref{Lem_psi_est_eq_5}, we obtain that
 \begin{align}\label{help_beta_ineq}
   \vartheta_{n,\psi_*}(x_1,\ldots,x_n)
     \leq \vartheta_{n,\psi}(x_1,\ldots,x_n)
     \leq \vartheta_{n,\psi^*}(x_1,\ldots,x_n)
 \end{align}
 for all $n\in\NN$ and $x_1,\ldots,x_n\in(0,1)$.
For all $\beta\in\RR_{++}$, $n\in\NN$ and $x_1,\ldots,x_n\in(0,1)$, we have that
 \begin{align*}
   &\sum_{i=1}^n \psi_*(x_i,\beta) = n\cdot\frac{\min(\alpha,1)}{\beta} + \ln(x_1\cdots x_n), \\
   &\sum_{i=1}^n \psi^*(x_i,\beta) = n\cdot\frac{\max(\alpha,1)}{\beta} + \ln(x_1\cdots x_n),
 \end{align*}
 yielding that
 \begin{align*}
   &\vartheta_{n,\psi_*}(x_1,\ldots,x_n) = -\frac{\min(\alpha,1)}{\ln\big(\sqrt[n]{x_1\cdots x_n}\big)},
   &\vartheta_{n,\psi^*}(x_1,\ldots,x_n) = -\frac{\max(\alpha,1)}{\ln\big(\sqrt[n]{x_1\cdots x_n}\big)}
 \end{align*}
 for all $n\in\NN$ and $x_1,\ldots,x_n\in(0,1)$.
These formulae together with \eqref{help_beta_ineq} yields the statement.
\end{proof}

\begin{Ex}[Gamma distribution]
Let $p,\lambda\in\RR_{++}$ and let $\xi$ be a random variable having Gamma distribution with parameters $p$ and $\lambda$, i.e.,
 $\xi$ has a density function
 \[
     f_\xi(x) := \begin{cases}
                  \dfrac{\lambda^p x^{p-1} \ee^{-\lambda x}}{\Gamma(p)} & \text{if $x>0$,}\\
                  0 & \text{if $x\leq 0$.}
               \end{cases}
 \]

Supposing that $\lambda\in\RR_{++}$ is known,
 we will establish the existence and uniqueness of a solution of the likelihood equation for $p$
 using Theorem 2.10 and Proposition 2.12 in Barczy and P\'ales \cite{BarPal2}.
In the considered case, using the setup given before Example 4.6 in Barczy and P\'ales \cite{BarPal2}, we have
 \ $\Theta := (0,\infty)$ \ and \ $f:\RR\times (0,\infty)\to\RR$,
 \[
     f(x,p) :=\begin{cases}
                  \dfrac{\lambda^p x^{p-1} \ee^{-\lambda x}}{\Gamma(p)} & \text{if $x>0$, $p>0$,}\\
                  0 & \text{otherwise,}
                \end{cases}
 \]
 and consequently, \ $\cX_f=(0,\infty)$.
\ Then \ $\psi:(0,\infty)\times (0,\infty)\to\RR$,
 \begin{align}\label{gamma_1}
   \psi(x,p) = \frac{\partial_2 f(x,p)}{f(x,p)}
             = -\frac{\Gamma'(p)}{\Gamma(p)} + \ln(x) + \ln(\lambda),
             \qquad  x,p\in(0,\infty).
 \end{align}
Note that the function $\frac{\Gamma'}{\Gamma}:(0,\infty)\to (0,\infty)$ is the digamma function,
 which is known to be strictly increasing and strictly concave, and it has range $(-\infty,\infty)$.
As a consequence, $\psi$ is a Z-function, and, for each $n\in\NN$ and $x_1,\ldots,x_n\in(0,\infty)$, the equation
 $\sum_{i=1}^n \psi(x_i,p) =0$, $p>0$, which is equivalent to
 \[
  \frac{\Gamma'(p)}{\Gamma(p)} = \ln(\lambda) + \frac{1}{n}\sum_{i=1}^n \ln(x_i),\qquad p>0,
 \]
has a unique solution $\vartheta_{n,\psi}(x_1,\ldots,x_n)$ for $p>0$.

\noindent Moreover, let $\lambda^*\in\RR_{++}$ and $\varphi:(0,\infty)\times (0,\infty)\to\RR$,
 \begin{align}\label{gamma_2}
   \varphi(x,p) := -\frac{\Gamma'(p)}{\Gamma(p)} + \ln(x) + \ln(\lambda^*),
             \qquad  x,p\in(0,\infty).
 \end{align}
Then, as we have seen it, $\varphi$ is a Z-function.

\noindent In what follows, we compare the $\psi$-estimators corresponding to the functions given in \eqref{gamma_1} and \eqref{gamma_2}.
By a direct computation, one can readily see that $\lambda\leq \lambda^*$ holds if and only if
 $\psi(x,p)\leq \varphi(x,p)$, $x>0$, $p>0$.
Further, since $\psi$ and $\varphi$ are $Z$-functions, by part (ii) of Remark \ref{Rem_Thm_main},
 we have that if $\psi(x,p)\leq \varphi(x,p)$, $x>0$, $p>0$, then
 $\vartheta_{1,\psi}(x)\leq \vartheta_{1,\varphi}(x)$, $x>0$.
Consequently, Theorem \ref{Lem_psi_est_eq_5} yields that
 $\vartheta_{n,\psi}(x_1,\ldots,x_n)\leq \vartheta_{n,\varphi}(x_1,\ldots,x_n)$
 holds for all $n\in\NN$ and $x_1,\ldots,x_n>0$ if and only if $\lambda\leq \lambda^*$.

Now, suppose that $p\in\RR_{++}$ is known.
Similarly as above, we have \ $\Theta := (0,\infty)$ \ and \ $f:\RR\times (0,\infty)\to\RR$,
 \[
     f(x,\lambda) :=\begin{cases}
                      \dfrac{\lambda^p x^{p-1} \ee^{-\lambda x}}{\Gamma(p)} & \text{if $x>0$, $\lambda>0$,}\\
                      0 & \text{otherwise,}
                     \end{cases}
 \]
 and consequently, \ $\cX_f=(0,\infty)$.
Then \ $\psi:(0,\infty)\times (0,\infty)\to\RR$,
 \begin{align*}
   \psi(x,\lambda) = \frac{\partial_2 f(x,\lambda)}{f(x,\lambda)}
             = \frac{p}{\lambda} - x, \qquad  x,\lambda\in(0,\infty).
 \end{align*}
We have that $\psi$ is a Z-function, and for each $n\in\NN$ and $x_1,\ldots,x_n\in(0,\infty)$, the equation
 $\sum_{i=1}^n \psi(x_i,\lambda) =0$, $\lambda>0$, which is equivalent to
 \[
   \frac{pn}{\lambda} = \sum_{i=1}^n x_i,\qquad \lambda>0,
 \]
 has a unique solution $\vartheta_{n,\psi}(x_1,\ldots,x_n):=\frac{pn}{\sum_{i=1}^n x_i}$.
One can easily see that Theorem \ref{Lem_psi_est_eq_5} could also be applied in this case.
\proofend
\end{Ex}

\begin{Ex}[Lomax distribution]
Let $\alpha,\lambda\in\RR_{++}$ and let $\xi$ be a random variable having Lomax (Pareto type II) distribution with parameters $\alpha$ and $\lambda$,
 i.e., $\xi$ has a density function
 \[
     f_\xi(x) := \begin{cases}
                  \dfrac{\alpha}{\lambda}\left(1 + \dfrac{x}{\lambda}\right)^{-(\alpha+1)} & \text{if $x>0$,}\\[2mm]
                  0 & \text{if $x\leq 0$.}
               \end{cases}
 \]

Supposing that $\alpha\in\RR_{++}$ is known, we will establish the existence and uniqueness of a solution
 of the likelihood equation for $\lambda$ using Theorem 2.10 and Proposition 2.12 in Barczy and P\'ales \cite{BarPal2}.
In the considered case, using the setup given before Example 4.6 in Barczy and P\'ales \cite{BarPal2}, we have
 \ $\Theta := (0,\infty)$ \ and \ $f:\RR\times (0,\infty)\to\RR$,
 \[
     f(x,\lambda) :=\begin{cases}
                  \dfrac{\alpha}{\lambda}\left(1 + \dfrac{x}{\lambda}\right)^{-(\alpha+1)} & \text{if $x>0$, $\lambda>0$,}\\[2mm]
                  0 & \text{otherwise,}
                \end{cases}
 \]
 and consequently, \ $\cX_f=(0,\infty)$.
\ Then \ $\psi:(0,\infty)\times (0,\infty)\to\RR$,
 \begin{align}\label{Lomax_1}
  \begin{split}
   \psi(x,\lambda) & = \frac{\partial_2 f(x,\lambda)}{f(x,\lambda)} =  \partial_2 (\ln f)(x,\lambda)
                     = \partial_2\left( \ln(\alpha) - (\alpha+1) \ln\left(1+\frac{x}{\lambda}\right) - \ln(\lambda)\right)\\
                   & = \frac{\alpha x - \lambda}{\lambda(\lambda + x)}
  \end{split}
 \end{align}
 for $x,\lambda\in(0,\infty)$.
Then $\psi$ is a $Z_1$-function with $\vartheta_{1,\psi}(x) = \alpha x$, $x>0$, and $\Theta_\psi=\Theta=(0,\infty)$.
Further, for each $n\in\NN$ and $x_1,\ldots,x_n\in(0,\infty)$, we have
 \begin{align}\label{help_30}
  \sum_{i=1}^n \psi(x_i,\lambda) = \frac{\alpha}{\lambda} \sum_{i=1}^n \frac{x_i}{\lambda+x_i} - \sum_{i=1}^{n}\frac{1}{\lambda+x_i}
                                 = \frac{\alpha n}{\lambda} - (\alpha+1)\sum_{i=1}^n \frac{1}{\lambda+x_i},
                                 \qquad \lambda >0.
 \end{align}
Next, we check that $\psi$ is a Z-function.
For this, using that $\psi$ is continuous in its second variable, by part (vi) of Theorem 2.10 in Barczy and P\'ales \cite{BarPal2},
 it is enough to verify that for all $x,y>0$ with $\alpha x=\vartheta_{1,\psi}(x) < \vartheta_{1,\psi}(y) = \alpha y$,
 the function
 \[
   (\alpha x,\alpha y)\ni \lambda\mapsto
      -\frac{\psi(x,\lambda)}{\psi(y,\lambda)}
      = \frac{(\lambda - \alpha x)(\lambda +y)}{(\alpha y - \lambda)(\lambda +x)}
      = \frac{\lambda^2 + (y-\alpha x)\lambda - \alpha xy}{-\lambda^2 + (\alpha y-x)\lambda + \alpha xy}
      =: h(\lambda)
 \]
 is strictly increasing.
We check that $h'(\lambda)>0$ for all $\lambda>0$, which yields that $h$ is strictly increasing.
An algebraic calculation shows that
 \begin{align*}
  h'(\lambda)
    = \frac{(1+\alpha)(y-x)(\lambda^2 + \alpha xy)}
           {(-\lambda^2 + (\alpha y-x)\lambda + \alpha xy)^2}
    >0, \qquad \lambda \in (\alpha x,\alpha y),
 \end{align*}
 as desired.
As a consequence, using also \eqref{help_30}, for each $n\in\NN$ and $x_1,\ldots,x_n>0$, the likelihood equation for $\lambda$
 takes the form
 \[
   \frac{\alpha}{\alpha+1} = \frac{\lambda}{n}\sum_{i=1}^n \frac{1}{x_i+\lambda},\qquad \lambda>0,
 \]
 which has a unique solution $\vartheta_{n,\psi}(x_1,\ldots,x_n)$.

\noindent Moreover, let $\alpha^*\in\RR_{++}$ be also given, and let $\varphi:(0,\infty)\times (0,\infty)\to\RR$ be defined by
 \begin{align}\label{Lomax_2}
   \varphi(x,\lambda) := \frac{\alpha^* x - \lambda}{\lambda(\lambda + x)},\qquad x,\lambda\in(0,\infty).
 \end{align}
Then, as we have shown it above, $\varphi$ is a $Z$-function.

\noindent Next, we compare the $\psi$-estimators corresponding to the functions given in \eqref{Lomax_1} and \eqref{Lomax_2}.
By a direct computation, one can easily see that
 $\psi(x,\lambda)\leq \varphi(x,\lambda)$ holds for all $x>0$ and $\lambda>0$
 (i.e., the inequality in part (iv) of Theorem \ref{Lem_psi_est_eq_5} is satisfied with $p(\lambda):=1$, $\lambda>0$) if and only if $\alpha \leq \alpha^*$.
Note also that $\vartheta_{1,\psi}(x)\leq \vartheta_{1,\varphi}(x)$ holds for all $x>0$
 if and only if $\alpha \leq \alpha^*$ (following from $\vartheta_{1,\psi}(x) = \alpha x$, $x>0$,
 and $\vartheta_{1,\varphi}(x)=\alpha^*x$, $x>0$).
Consequently, Theorem \ref{Lem_psi_est_eq_5} yields that $\vartheta_{n,\psi}(x_1,\ldots,x_n) \leq \vartheta_{n,\varphi}(x_1,\ldots,x_n)$ holds for all
 $n\in\NN$ and $x_1,\ldots,x_n>0$ if and only if $\alpha\leq \alpha^*$.

Now, suppose that $\lambda\in\RR_{++}$ is known.
Similarly as above, we have \ $\Theta := (0,\infty)$ \ and \ $f:\RR\times (0,\infty)\to\RR$,
 \[
     f(x,\alpha) :=\begin{cases}
                  \dfrac{\alpha}{\lambda}\left(1 + \dfrac{x}{\lambda}\right)^{-(\alpha+1)} & \text{if $x>0$, $\alpha>0$,}\\[2mm]
                  0 & \text{otherwise,}
                \end{cases}
 \]
and consequently, \ $\cX_f=(0,\infty)$.
Then \ $\psi:(0,\infty)\times (0,\infty)\to\RR$,
 \begin{align}\label{Lomax_3}
   \psi(x,\alpha) = \partial_2\left( \ln(\alpha) - (\alpha+1)\ln\left(1+\frac{x}{\lambda}\right) - \ln(\lambda)\right)
                  =  \frac{1}{\alpha} - \ln\left(1+\frac{x}{\lambda}\right), \qquad  x,\alpha\in(0,\infty).
 \end{align}
Then $\psi$ is a $Z_1$-function with $\vartheta_{1,\psi}(x) = 1/\ln\big(1+\frac{x}{\lambda}\big)$, $x>0$, and $\Theta_\psi=\Theta=(0,\infty)$.
This, together with the fact that $\psi$ is strictly decreasing and continuous in its second variable, yields that $\psi$ is a Z-function, see Barczy and P\'ales \cite[Proposition 2.12]{BarPal2}.
Further, for all $n\in\NN$ and $x_1,\ldots,x_n\in(0,\infty)$, the likelihood equation for $\alpha$ takes the form
 \[
   \sum_{i=1}^n \psi(x_i,\alpha) = \frac{n}{\alpha} - \sum_{i=1}^n \ln\left(1+\frac{x_i}{\lambda}\right)=0,\qquad \alpha>0,
 \]
 which has a unique solution
 \[
  \vartheta_{n,\psi}(x_1,\ldots,x_n) = \frac{1}{\frac{1}{n}\sum_{i=1}^n \ln\left(1+\frac{x_i}{\lambda}\right)}.
 \]

In addition, let $\lambda^*\in\RR_{++}$, and \ $\varphi:(0,\infty)\times (0,\infty)\to\RR$,
 \begin{align}\label{Lomax_4}
   \varphi(x,\alpha) :=  \frac{1}{\alpha} - \ln\left(1+\frac{x}{\lambda^*}\right), \qquad  x,\alpha\in(0,\infty).
 \end{align}
Then $\varphi$ is a $Z$-function and $\Theta_\varphi=(0,\infty)$.

To compare the $\psi$-estimators corresponding to the functions given in \eqref{Lomax_3} and \eqref{Lomax_4}, a direct computation shows that $\psi(x,\alpha)\leq \varphi(x,\alpha)$ holds for all $x>0$ and $\alpha>0$
(i.e., the inequality in part (iv) of Theorem \ref{Lem_psi_est_eq_5} is satisfied with $p(\alpha):=1$, $\alpha>0$) if and only if $\lambda \leq \lambda^*$.
Moreover, again by a direct computation, $\vartheta_{n,\psi}(x_1,\ldots,x_n) \leq \vartheta_{n,\varphi}(x_1,\ldots,x_n)$ holds for all $n\in\NN$ and $x_1,\ldots,x_n>0$ if and only if $\lambda\leq \lambda^*$.
In fact, Theorem \ref{Lem_psi_est_eq_5} also directly implies that $\vartheta_{n,\psi}(x_1,\ldots,x_n) \leq \vartheta_{n,\varphi}(x_1,\ldots,x_n)$ holds for all
 $n\in\NN$ and $x_1,\ldots,x_n>0$ if and only if $\lambda\leq \lambda^*$.
\proofend
\end{Ex}

\begin{Ex}[Lognormal distribution]
Let $\mu\in\RR$, $\sigma^2\in\RR_{++}$ and let $\xi$ be a random variable having lognormal distribution with parameters $\mu$ and $\sigma^2$,
 i.e., $\xi$ has a density function
 \[
     f_\xi(x) := \begin{cases}
                  \dfrac{1}{\sigma \sqrt{2\pi}\, x}\exp\left\{ - \dfrac{(\ln(x)-\mu)^2}{2\sigma^2}\right\}  & \text{if $x>0$,}\\[2mm]
                  0 & \text{if $x\leq 0$.}
               \end{cases}
 \]

Supposing that $\sigma^2>0$ is known, we will establish the existence and uniqueness of a solution
 of the likelihood equation for $\mu$ using Proposition 2.12 in Barczy and P\'ales \cite{BarPal2}.
In the considered case, using the setup given before Example 4.6 in Barczy and P\'ales \cite{BarPal2}, we have
 \ $\Theta := \RR$ \ and \ $f:\RR\times \RR\to\RR$,
 \[
   f(x,\mu) := \begin{cases}
                  \dfrac{1}{\sigma \sqrt{2\pi} \,x}\exp\left\{- \dfrac{(\ln(x)-\mu)^2}{2\sigma^2} \right\}  & \text{if $x>0$, $\mu\in\RR$,}\\[2mm]
                  0 & \text{otherwise,}
               \end{cases}
 \]
 and consequently, \ $\cX_f=(0,\infty)$.
Then \ $\psi:(0,\infty)\times\RR\to\RR$,
 \begin{align*}
   \psi(x,\mu) = \partial_2\left( - \frac{(\ln(x)-\mu)^2}{2\sigma^2}  - \ln(x) - \ln(\sigma\sqrt{2\pi}) \right)
                  =  \frac{\ln(x) - \mu}{\sigma^2}, \qquad  x\in(0,\infty),\quad \mu\in\RR.
 \end{align*}
Then $\psi$ is a $Z_1$-function with $\vartheta_{1,\psi}(x) = \ln(x)$, $x>0$, and $\Theta_\psi=\Theta=\RR$.
This, together with the fact that $\psi$ is strictly decreasing and continuous in its second variable, yields that $\psi$ is a Z-function, see Barczy and P\'ales \cite[Proposition 2.12]{BarPal2}.
Further, for all $n\in\NN$ and $x_1,\ldots,x_n\in(0,\infty)$, the likelihood equation for $\mu$ takes the form
 \[
   \sum_{i=1}^n \psi(x_i,\mu) = \frac{1}{\sigma^2}\sum_{i=1}^n \ln(x_i) - \frac{n\mu}{\sigma^2}=0,\qquad \mu\in\RR,
 \]
 which has a unique solution
 \[
  \vartheta_{n,\psi}(x_1,\ldots,x_n) = \frac{1}{n}\sum_{i=1}^n \ln(x_i).
 \]
Note that $\vartheta_{n,\psi}(x_1,\ldots,x_n)$ does not depend on $\sigma^2$.

\noindent Moreover, let $\sigma_*^2>0$ be also given, and \ $\varphi:(0,\infty)\times \RR\to\RR$,
 \begin{align*}
   \varphi(x,\mu) :=  \frac{\ln(x) - \mu}{\sigma_*^2}, \qquad  x\in(0,\infty),\quad \mu\in\RR.
 \end{align*}
Then $\varphi$ is a $Z$-function, $\vartheta_{1,\varphi}(x) = \ln(x) = \vartheta_{1,\psi}(x)$, $x>0$, and
 \[
   \psi(x,\mu) = \frac{\sigma_*^2}{\sigma^2}\,\varphi(x,\mu),\qquad x>0,\quad \mu\in\RR.
 \]
Hence the inequality in part (iv) of Theorem \ref{Lem_psi_est_eq_5} holds with $p(\mu):=\frac{\sigma_*^2}{\sigma^2}$, $\mu\in\RR$.
In fact, we get
 \[
 \vartheta_{n,\psi}(x_1,\ldots,x_n) = \frac{1}{n}\sum_{i=1}^n \ln(x_i) = \vartheta_{n,\varphi}(x_1,\ldots,x_n)
 \]
 for all $n\in\NN$ and $x_1,\ldots,x_n\in(0,\infty)$.
\proofend
\end{Ex}

\begin{Ex}[Laplace distribution]
Let $\mu\in\RR$, $b\in\RR_{++}$ and let $\xi$ be a random variable having Laplace distribution with parameters $\mu$ and $b$,
 i.e., $\xi$ has a density function
 \[
     f_\xi(x) := \frac{1}{2b} \ee^{-\frac{\vert x-\mu\vert}{b}},\qquad x\in\RR.
 \]
Supposing that $\mu\in\RR$ is known, we will establish the existence and uniqueness of a solution
 of the likelihood equation for $b$ using Theorem 2.10 in Barczy and P\'ales \cite{BarPal2}.
In the considered case, using the setup given before Example 4.6 in Barczy and P\'ales \cite{BarPal2}, we have
 \ $\Theta := (0,\infty)$ \ and \ $f:\RR\times (0,\infty)\to\RR$,
 \[
   f(x,b) := \frac{1}{2b} \ee^{-\frac{\vert x-\mu\vert}{b}},\qquad x\in\RR, \quad b>0,
 \]
 and consequently, \ $\cX_f=\RR$.
Further, let us take $X:=\RR\setminus\{\mu\}\subseteq \cX_f$.
Then \ $\psi:(\RR\setminus\{\mu\})\times(0,\infty)\to\RR$,
 \begin{align*}
   \psi(x,b) = \partial_2\left( - \frac{\vert x-\mu\vert}{b}  - \ln(2) - \ln(b) \right)
               = \frac{\vert x-\mu\vert}{b^2} - \frac{1}{b}, \qquad  x\in\RR\setminus\{\mu\},\quad b>0.
 \end{align*}
Then $\psi$ is a $Z_1$-function with $\vartheta_{1,\psi}(x) = \vert x-\mu\vert$, $x\in\RR\setminus\{\mu\}$.
Further, if $x,y\in\RR\setminus\{\mu\}$ are such that $\vert x-\mu\vert = \vartheta_{1,\psi}(x) < \vartheta_{1,\psi}(y) = \vert y-\mu\vert$, then the function
 \[
  (\vert x-\mu\vert, \vert y-\mu\vert)\ni b \mapsto
        - \frac{\psi(x,b)}{\psi(y,b)}
        = - \frac{\vert x-\mu\vert - b}{\vert y-\mu\vert - b}
        = -1 + \frac{\vert y-\mu\vert - \vert x-\mu\vert}{\vert y-\mu\vert - b}
 \]
is strictly increasing. This, together with the fact that $\psi$ is a $Z_1$-function, yields that $\psi$ is a $T$-function, see part (vi) of Theorem 2.10 in Barczy and P\'ales \cite{BarPal2}.
Since $\psi$ is continuous in its second variable as well, we get that $\psi$ is a $Z$-function as well.
By a direct calculation, for all $n\in\NN$ and $x_1,\ldots,x_n\in\RR\setminus\{\mu\}$, the likelihood equation for $b$
 takes the form
 \[
  \sum_{i=1}^n \psi(x_i,b) = -\frac{1}{b^2} \sum_{i=1}^n \vert x_i - \mu \vert - \frac{n}{b}=0,\qquad b>0,
 \]
 which has the unique solution
 \[
 \vartheta_{n,\psi}(x_1,\ldots,x_n) = \frac{1}{n}\sum_{i=1}^n \vert x_i - \mu \vert.
 \]

Furthermore, if $\mu^*\in\RR$ and $\varphi:\RR\setminus\{\mu^*\}\times(0,\infty)\to\RR$,
 \begin{align*}
   \varphi(x,b) := \frac{\vert x-\mu^*\vert}{b^2} - \frac{1}{b}, \qquad  x\in\RR\setminus\{\mu^*\},\quad b>0,
 \end{align*}
then $\varphi$ is a $Z$-function with $\vartheta_{1,\varphi}(x) = \vert x-\mu^*\vert$, $x\in\RR\setminus\{\mu^*\}$.
By a direct computation, one can easily see that $\vartheta_{1,\psi}(x)\leq \vartheta_{1,\varphi}(x)$ holds for all $x\in \RR\setminus\{\mu,\mu^*\}$ if and only if $\mu=\mu^*$.
This also implies that $\vartheta_{n,\psi}(x_1,\ldots,x_n)\leq \vartheta_{1,\varphi}(x_1,\ldots,x_n)$ holds for all $n\in\NN$ and $x_1,\ldots,x_n\in\RR\setminus\{\mu,\mu^*\}$ if and only if $\mu=\mu^*$.
\proofend
\end{Ex}

\section*{Declaration of competing interest}

The authors declare that they have no known competing financial interests or personal relationships
that could have appeared to influence the work reported in this paper.

\section*{Final publication}

This manuscript has been split into two parts and published under the titles \textit{Comparison and equality of generalized $\psi$-estimators} and \textit{Comparison and equality of Bajraktarević-type $\psi$-estimators} in the journals ``Annals of the Institute of Statistical Mathematics'' and ``REVSTAT-Statistical Journal'', respectively.

\addcontentsline{toc}{section}{References}
\bibliographystyle{plain}
\bibliography{psi_estimator_equal_char_bib}

\end{document}